\newtheorem{theorem}{Theorem}[section] 
\newtheorem{claim}{Claim}[theorem]
\newtheorem{lemma}[theorem]{Lemma} 
\newtheorem{proposition}[theorem]{Proposition} 
\newtheorem{observation}[theorem]{Observation} 
\newtheorem{corollary}[theorem]{Corollary} 
\theoremstyle{definition}
\newtheorem{definition}[theorem]{Definition}
\newtheorem{problem}[theorem]{Problem}
\theoremstyle{remark}
\newtheorem{remark}[theorem]{Remark}
\newtheorem{conclusion}[theorem]{Conclusion}
\numberwithin{equation}{section}
\newcommand{\conc}{{}^\frown\!}
\newcommand{\lh}{{\rm lh}\/}
\newcommand{\rest}{{\restriction}}
\newcommand{\vtl}{\vartriangleleft}
\newcommand{\dom}{{\rm dom}} 
\newcommand{\rk}{{\rm rk}} 
\newcommand{\comp}{\circ} 
\newcommand{\suc}{{\rm succ}}
\newcommand{\mrot}{{\rm root}}
\newcommand{\bbC}{{\mathbb C}}
\newcommand{\bbD}{{\mathbb D}}
\newcommand{\cH}{{\mathcal H}}
\newcommand{\cO}{{\mathcal O}}
\newcommand{\bbP}{{\mathbb P}}
\newcommand{\bbQ}{{\mathbb Q}}
\newcommand{\dbQ}{{\name{\mathbb Q}}}
\newcommand{\cT}{{\mathcal T}}
\newcommand{\bT}{{\mathbb T}}
\newcommand{\cf}{{\rm cf}\/}
\newcommand{\st}{{\bf st}} 
\newcommand{\vare}{\varepsilon}
\newcommand{\forces}{\Vdash} 
\newcommand{\bV}{{\bf V}} 
\newcommand{\lesdot}{\mathrel{\mathord{<}\!\!\raise 
0.8 pt\hbox{$\scriptstyle\circ$}}} 
\newcommand{\pr}{{\rm pr}}
\newcommand{\auxgame}{{\Game^{{\rm aux}}_{\bar{Y}}(p,\bar{q},\bbQ,\leq,
    \bar{\leq}_\pr,D)}}  
\newcommand{\auxzero}{{\Game^{{\rm aux}}_{\bar{Y}}}} 
\newcommand{\auxk}{{\Game^{{\rm aux}}_{\bar{Y}^\xi}}} 
\newcommand{\masgame}{{\Game^{{\rm main}}_{\bar{Y}}(p,\bbQ,\leq,
    \bar{\leq}_\pr,D)}}  
\newcommand{\masgamexi}{{\Game^{{\rm main}}_{\bar{Y}^\xi}(r_\delta(\xi),
    \name{\bbQ}_\xi,\leq,\bar{\leq}_\pr,D^{\bV^{\bbP_\xi}})}} 
\newcommand{\masgamezero}{{\Game^{{\rm main}}_{\bar{Y}^\xi}}} 
\newcommand{\qone}{{{\mathbb Q}^2_\lambda}}
\newcommand{\qtwo}{{{\mathbb Q}^1_\lambda}}
\newcommand{\qthree}{{{\mathbb Q}^3_\lambda}}
\newcommand{\qfour}{{{\mathbb Q}^4_\lambda}}
\newcommand{\qell}{{{\mathbb Q}^\ell_\lambda}}
\def\mathunderaccent#1#2 {\let\theaccent#1\skewfactor#2
\mathpalette\putaccentunder}
\def\putaccentunder#1#2{\oalign{$#1#2$\crcr\hidewidth
\vbox to.2ex{\hbox{$#1\skew\skewfactor\theaccent{}$}\vss}\hidewidth}}
\def\name{\mathunderaccent\tilde-3 }
\begin{document}

\title[More about $\lambda$--support iterations]{More about
  $\lambda$--support iterations of $({<}\lambda)$--complete forcing notions}  

\author{Andrzej Ros{\l}anowski}
\address{Department of Mathematics\\
 University of Nebraska at Omaha\\
 Omaha, NE 68182-0243, USA}
\email{roslanow@member.ams.org}
\urladdr{http://www.unomaha.edu/logic}

\author{Saharon Shelah}
\address{Einstein Institute of Mathematics\\
Edmond J. Safra Campus, Givat Ram\\
The Hebrew University of Jerusalem\\
Jerusalem, 91904, Israel\\
 and  Department of Mathematics\\
 Rutgers University\\
 New Brunswick, NJ 08854, USA}
\email{shelah@math.huji.ac.il}
\urladdr{http://shelah.logic.at}

\thanks{We would like to thank the referee for valuable comments and
  suggestions.\\
Both authors acknowledge support from the United States-Israel
Binational Science Foundation (Grant no. 2006108). This is publication
942 of the second author.}     

\keywords{Forcing, iterations, not collapsing cardinals, proper} 
\subjclass{Primary 03E40; Secondary:03E35}
\date{September 2012}

\begin{abstract}
This article continues Ros{\l}anowski and Shelah \cite{RoSh:655,
  RoSh:860, RoSh:777, RoSh:888, RoSh:890} and we introduce here a new
property of $({<}\lambda)$--strategically complete forcing notions which
implies that their $\lambda$--support iterations do not collapse $\lambda^+$
(for a strongly inaccessible cardinal $\lambda$).  
\end{abstract}

\maketitle

\section{Introduction}
The systematic studies of iterations with uncountable supports which do not
collapse cardinals were intensified with articles Shelah \cite{Sh:587,
  Sh:667}. Those works started the development of a theory parallel to that
of ``proper forcing in CS iterations'', but the drawback there was that the
corresponding properties were more like those in the case of ``not adding
new reals in CS iterations of proper forcings''. If we want to investigate
cardinal characteristics associated with ${}^\lambda\lambda$ (in a manner it
was done for cardinal characteristics of the continuum), we naturally are
interested in iterating forcing notions which do add new elements of
${}^\lambda\lambda$. The study of $\lambda$--support iterations of such
forcing notions (for an uncountable cardinal $\lambda$) has a quite long
history already. For instance, Kanamori \cite{Ka80} considered iterations of
$\lambda$--Sacks forcing notion (similar to the forcing
$\bbQ^{2,\bar{E}}$; see Definition \ref{defFilter} and Remark
\ref{oldfor}) and he proved that under some circumstances these iterations
preserve $\lambda^+$. Fusion properties of iterations of other tree--like
forcing notions were used in Friedman and Zdomskyy \cite{FrZd10} and
Friedman, Honzik and Zdomskyy \cite{FrHoZdxx}. In particular, they showed
that $\lambda$--support iterations of a close relative of $\bbQ^2_\lambda$
from Definition \ref{def2.1} do not collapse $\lambda^+$.  Several
conditions ensuring that $\lambda^+$ is not collapsed in $\lambda$--support
iterations were introduced in a series of previous works Ros{\l}anowski and
Shelah \cite{RoSh:655, RoSh:860, RoSh:777, RoSh:888, RoSh:890}. Also
Eisworth \cite{Ei03} introduced a condition of this type. Each of those
conditions was meant to be applicable to some natural forcing notions adding
a new member of ${}^\lambda\lambda$ without adding new elements of
${}^{{<}\lambda} \lambda$. In some sense, they explained why the relevant
forcings can be iterated (without collapsing cardinals).

In the present paper we introduce {\em semi--pure properness\/} (Definition 
\ref{def1.3}) and we show that for an inaccessible cardinal $\lambda$,
$\lambda$--support iterations of semi--purely proper forcing notions are
proper in the standard sense (Theorem \ref{thm1.6}). The cases of
successor $\lambda$ and/or weakly inaccessible $\lambda$ will be
treated in a subsequent paper \cite{RoSh:1001}. 

The semi--pure properness is designed to cover the forcing notion $\qone$
mentioned above (and its relatives given in \ref{def2.1}, \ref{defFilter}),
but we hope it is much more general. This property has a flavor of {\em
  fuzzy properness over quasi--diamonds\/} of \cite[Definition
A.3.6]{RoSh:777} and even more so of {\em being reasonably merry\/} of
\cite[Definition 6.3]{RoSh:888}.  There is also some similarity with {\em
  pure $B^*$--boundedness\/} of \cite[Definition 2.2]{RoSh:888}. However,
the exact relationships between these and other properness conditions are
not clear.  

While there are some similarities between conditions studied so far, we are
far from the state that was achieved for CS iterations and the concept of
properness.  The considered properties are (unfortunatelly) tailored to fit
particular forcing notions and they do not provide any satisfactory general
framework covering all examples. The search for the ``right'' notion of
$\lambda$--propernes is still far from being completed.

Basic definitions concerning strategically complete forcing notions, their
iterations and trees of conditions are reminded in the further part of the
Introduction. In the second section of the paper we prove our Iteration
Theorem \ref{thm1.6} and in the following section we present the forcing
notions to which this theorem applies. Some special properties of and
relationships between the forcings from the third section are investigated
in the fourth section.

\subsection{Notation}
Our notation is rather standard and compatible with that of classical
textbooks (like Jech \cite{J}). However, in forcing we keep the older
convention that {\em a stronger condition is the larger one}. 

\begin{enumerate}
\item Ordinal numbers will be denoted be the lower case initial letters of
the Greek alphabet ($\alpha,\beta,\gamma,\delta\ldots$) and also by $i,j$
(with possible sub- and superscripts). 

Cardinal numbers will be called $\kappa,\lambda$; {\bf $\lambda$ will
  be always assumed to be a regular uncountable cardinal such that
  $\lambda^{<\lambda}=\lambda$\/}; in most instances $\lambda$ is even
assumed to be {\bf strongly inaccessible}. 

Also, $\chi$ will denote a {\em sufficiently large\/} regular cardinal;
$\cH(\chi)$ is the family of all sets hereditarily of size less than
$\chi$. Moreover, we fix a well ordering $<^*_\chi$ of $\cH(\chi)$.

\item We will consider several games of two players. One player will be
called {\em Generic\/} or {\em Complete\/} or just {\em COM\/}, and we
will refer to this player as ``she''. Her opponent will be called {\em
Antigeneric\/} or {\em Incomplete} or just {\em INC\/} and will be
referred to as ``he''.

\item For a forcing notion $\bbP$, all $\bbP$--names for objects in
  the extension via $\bbP$ will be denoted with a tilde below (e.g.,
  $\name{\tau}$, $\name{X}$), and $\name{G}_\bbP$ will stand for the
  canonical $\bbP$--name for the generic filter in $\bbP$. The weakest
  element of $\bbP$ will be denoted by $\emptyset_\bbP$ (and we will always
  assume that there is one, and that there is no other condition  equivalent
  to it). 

  By ``$\lambda$--support iterations'' we mean iterations in which domains
  of conditions are of size $\leq\lambda$. However, on some occasions we
  will pretend that conditions in a $\lambda$--support iteration
  $\bar{\bbQ}=\langle\bbP_\zeta,\name{\bbQ}_\zeta:\zeta< \zeta^* \rangle$
  are total functions on $\zeta^*$ and for $p\in\lim(\bar{\bbQ})$ and
  $\alpha\in\zeta^*\setminus\dom(p)$ we will let
  $p(\alpha)=\name{\emptyset}_{\name{\bbQ}_\alpha}$.

\item A filter on $\lambda$ is a non-empty family of subsets of $\lambda$
  closed under supersets and intersections and do not containing
  $\emptyset$. A filter is $({<\lambda})$--complete if it is closed under
  intersections of ${<}\lambda$ members. (Note: we do allow principal
  filters or even $\{\lambda\}$.)

For a filter $D$ on $\lambda$, the family of all $D$--positive subsets
of $\lambda$ is called $D^+$. (So $A\in D^+$ if and only if $A\subseteq
\lambda$ and $A\cap B\neq\emptyset$ for all $B\in D$.) By a normal filter on
$\lambda$ we mean {\em proper uniform\/} filter closed under diagonal
intersections.  

\item By a {\em sequence\/} we mean a function whose domain is a set of
  ordinals. For two sequences $\eta,\nu$ we write $\nu\vtl\eta$ whenever
  $\nu$ is a proper initial segment of $\eta$, and $\nu \trianglelefteq\eta$
  when either $\nu\vtl\eta$ or $\nu=\eta$.  The length of a sequence $\eta$
  is the order type of its domain and it is denoted by $\lh(\eta)$.

\item  A tree is a $\vtl$--downward closed set of sequences. A complete
  $\lambda$--tree is a tree $T\subseteq {}^{<\lambda}\lambda$ such that
  every $\vtl$-chain of size less than $\lambda$ has an $\vtl$-bound in $T$
  and for each $\eta\in T$ there is $\nu\in T$ such that $\eta\vtl\nu$.

Let $T\subseteq {}^{<\lambda}\lambda$ be a tree. For $\eta\in T$ we let 
\[\suc_T(\eta)=\{\alpha<\lambda:\eta\conc\langle\alpha\rangle\in T\}\quad
\mbox{ and }\quad (T)_\eta=\{\nu\in T:\nu\vtl\eta\mbox{ or }\eta
\trianglelefteq \nu\}.\]
We also let $\mrot(T)$ be the shortest $\eta\in T$ such that
$|\suc_T(\eta)|>1$ and $\lim_\lambda(T)=\{\eta\in{}^\lambda\lambda: (\forall
\alpha<\lambda)(\eta\rest\alpha\in T)\}$.
\end{enumerate}

\subsection{Background on trees of conditions}

\begin{definition}
\label{comp}
Let $\bbP$ be a forcing notion.
\begin{enumerate}
\item For an ordinal $\gamma$ and a condition $r\in\bbP$, let
  $\Game_0^\gamma(\bbP,r)$ be the following game of two players, {\em
    Complete} and  {\em Incomplete}:    
\begin{quotation}
\noindent the game lasts at most $\gamma$ moves and during a play the
players construct a sequence $\langle (p_i,q_i): i<\gamma\rangle$ of pairs
of conditions from $\bbP$ in such a way that 
\[(\forall j<i<\gamma)(r\leq p_j\leq q_j\leq p_i)\]
and at the stage $i<\gamma$ of the game, first Incomplete chooses $p_i$ and
then Complete chooses $q_i$.   
\end{quotation}
Complete wins if and only if for every $i<\gamma$ there are legal moves for
both players. 
\item We say that the forcing notion $\bbP$ is {\em strategically
$({<}\gamma)$--complete\/} ({\em strategically $({\leq}\gamma)$--complete},
respectively) if Complete has a winning strategy in the game 
$\Game_0^\gamma(\bbP,r)$ (in the game $\Game_0^{\gamma+1}(\bbP,r)$,
respectively) for each condition $r\in\bbP$.  
\item Let a model $N\prec (\cH(\chi),\in,<^*_\chi)$ be such that
${}^{<\lambda} N\subseteq N$, $|N|=\lambda$ and $\bbP\in N$. We say that a 
condition $p\in\bbP$ is {\em $(N,\bbP)$--generic in the standard sense\/}
(or just: {\em $(N,\bbP)$--generic\/}) if for every $\bbP$--name
$\name{\tau}\in N$ for an ordinal we have $p\forces$`` $\name{\tau}\in N$
''. 
\item $\bbP$ is {\em $\lambda$--proper in the standard sense\/} (or just:
{\em $\lambda$--proper\/}) if there is $x\in \cH(\chi)$  such that for
every model $N\prec (\cH(\chi),\in,<^*_\chi)$ satisfying  
\[{}^{<\lambda} N\subseteq N,\quad |N|=\lambda\quad\mbox{ and }\quad\bbP,x
  \in N, \]
and every condition $q\in N\cap\bbP$ there is an $(N,\bbP)$--generic
condition $p\in\bbP$ stronger than $q$.
\end{enumerate}
\end{definition}

\begin{definition}
[Compare {\cite[Def. A.1.7]{RoSh:777}}, see also {\cite[Def. 2.2]{RoSh:860}}]
\label{dA.5}
\begin{enumerate}
\item Let $\gamma$ be an ordinal, $\emptyset\neq w\subseteq \gamma$.
{\em A $(w,1)^\gamma$--tree\/} is a pair $\cT=(T,\rk)$ such that  
\begin{itemize}
\item $\rk:T\longrightarrow w\cup\{\gamma\}$, 
\item if $t\in T$ and $\rk(t)=\vare$, then $t$ is a sequence $\langle
(t)_\zeta: \zeta\in w\cap\vare\rangle$, 
\item $(T,\vtl)$ is a tree with root $\langle\rangle$ and 
\item if $t\in T$, then there is $t'\in T$ such that $t\trianglelefteq t'$
  and $\rk(t')=\gamma$.
\end{itemize}
\item If, additionally, $\cT=(T,\rk)$ is such that every chain in $T$ has a
  $\vtl$--upper bound it $T$, we will call it  {\em a standard
    $(w,1)^\gamma$--tree\/}  

We will keep the convention that $\cT^x_y$ is $(T^x_y,\rk^x_y)$.
\item Let $\bar{\bbQ}=\langle\bbP_i,\name{\bbQ}_i:i<\gamma\rangle$ be a
$\lambda$--support iteration. {\em A tree of conditions in $\bar{\bbQ}$} is
a system $\bar{p}=\langle p_t:t\in T\rangle$ such that  
\begin{itemize}
\item $(T,\rk)$ is a $(w,1)^\gamma$--tree for some $w\subseteq
\gamma$, 
\item $p_t\in\bbP_{\rk(t)}$ for $t\in T$, and
\item if $s,t\in T$, $s\vtl t$, then $p_s=p_t\rest\rk(s)$. 
\end{itemize}
If, additionally, $(T,\rk)$ is a standard tree, then $\bar{p}$ is called
{\em a standard tree of conditions}.  
\item Let $\bar{p}^0,\bar{p}^1$ be trees of conditions in $\bar{\bbQ}$,
  $\bar{p}^i=\langle p^i_t:t\in T\rangle$. We write $\bar{p}^0\leq
  \bar{p}^1$ whenever for each $t\in T$ we have $p^0_t\leq p^1_t$.   
\end{enumerate}
\end{definition}

Note that our standard trees and trees of conditions are a special case of
that \cite[Def. A.1.7]{RoSh:777} when $\alpha=1$. 

\section{Semi-purity and iterations}
In this section we introduce a new property of $({<}\lambda)$--complete
forcing notions: {\em semi--pure properness\/}. Then we prove that if
$\lambda$ is strongly inaccessible, then $\lambda$--support iterations of
semi--pure proper forcing notions are proper in the standard sense (so they
preserve stationarity of relevant sets and do not collapse $\lambda^+$). 

\begin{definition}
  \label{def1.1}
Let $f:\lambda\longrightarrow\lambda+1$. {\em A forcing notion with
  $f$--complete semi-purity\/} is a triple $(\bbQ,\leq,\bar{\leq}_\pr)$ such
that $\bar{\leq}_\pr=\langle\leq^\alpha_\pr:\alpha<\lambda\rangle$ and 
$\leq,\leq^\alpha_\pr$ are transitive and reflexive (binary) relations on
$\bbQ$ satisfying for each $\alpha<\lambda$: 
\begin{enumerate}
\item[(a)] ${\leq^\alpha_\pr}\subseteq {\leq}$,
\item[(b)] $(\bbQ,\leq)$ is strategically $({<}\lambda)$--complete and
  $(\bbQ,\leq^\alpha_\pr)$ is strategically $({\leq}\kappa)$--complete
  for all infinite cardinals $\kappa<f(\alpha)$. 
\end{enumerate}
If $(\bbQ,\leq,\bar{\leq}_\pr)$ is a forcing notion with semi-purity, then
all our forcing terms (like ``forces'', ``name'', etc) refer to
$(\bbQ,\leq)$. The relations $\leq^\alpha_\pr$ have an auxiliary character
only and if we want to refer to them we add ``$\alpha$--purely'' (so
``stronger'' refers to $\leq$ and ``$\alpha$--purely stronger'' refers to
$\leq^\alpha_\pr$).  
\end{definition}

\begin{remark}
  Note that unlike in \cite[Definition 2.1]{RoSh:888}, in semi-purity we do
  not require any kind of pure decidability.
\end{remark}

\begin{definition}
  \label{def1.3}
Let $f:\lambda\longrightarrow\lambda+1$ and let $(\bbQ,\leq,\bar{\leq}_\pr)$
be a forcing notion with $f$--complete semi-purity. Suppose that $D$ is a
normal filter on $\lambda$ (e.g., the club filter). 
\begin{enumerate}
\item A sequence $\bar{Y}=\langle Y_\alpha:\alpha<\lambda\rangle$ is called
  {\em an indexing sequence\/} whenever $\emptyset\neq Y_\alpha\subseteq
  {}^\alpha\lambda$ and $|Y_\alpha|<\lambda$ for each $\alpha<\lambda$. 
\item For an indexing sequence $\bar{Y}$, a system $\bar{q}=\langle
  q_{\alpha,\eta}:\alpha<\lambda\ \&\ \eta\in Y_\alpha\rangle\subseteq \bbQ$
  and a condition $p\in\bbQ$ we define a game $\auxgame$ between two
  players, COM and INC as follows. A play of $\auxgame$ lasts $\lambda$
  steps during which the players choose successive terms of a sequence
  $\langle (r_\alpha,A_\alpha,\eta_\alpha,r_\alpha'): \alpha<\lambda
  \rangle$. These terms are chosen so that  
\begin{enumerate}
\item[(a)] $r_\alpha,r_\alpha'\in\bbQ$, $A_\alpha\in D$, $\eta_\alpha\in
  {}^\alpha\lambda$ and for $\alpha<\beta<\lambda$:  
\[p=r_0\leq r_\alpha\leq r_\alpha'\leq r_\beta\quad \mbox{ and }\quad
A_\beta\subseteq A_\alpha\quad \mbox{ and }\quad \eta_\alpha\vtl
\eta_\beta,\] 
\item[(b)] at a stage $\alpha$ of the play, first COM chooses
  $(r_\alpha,A_\alpha,\eta_\alpha)$ and then INC picks $r_\alpha'\geq
  r_\alpha$. 
\end{enumerate}
At the end, COM wins the play $\langle (r_\alpha,A_\alpha,\eta_\alpha,
r_\alpha'): \alpha<\lambda\rangle$ if and only if both players had always
legal moves (so the play really lasted $\lambda$ steps) and 
\begin{enumerate}
\item[$(\odot)$] if $\gamma\in \mathop{\triangle}\limits_{\alpha<\lambda}
  A_\alpha$ is limit, then $\eta_\gamma\in Y_\gamma$ and
  $q_{\gamma,\eta_\gamma}\leq_\pr^\gamma r_\gamma$.  
\end{enumerate}
\item If COM has a winning strategy in $\auxgame$ then we say that {\em the
    condition $p$ is aux-generic over $\bar{q},D$}. 
\item Let $\bar{Y}$ be an indexing sequence and $p\in\bbQ$. A game
  $\masgame$ between two players, Generic and Antigeneric, is defined as
  follows. A play of the game lasts $\lambda$ steps during which the 
  players construct a sequence $\langle \bar{p}^\alpha,
  \bar{q}^\alpha:\alpha<\lambda\rangle$. At stage $\alpha<\lambda$ of the
  play, first Generic chooses a system $\bar{p}^\alpha=\langle
  p_{\alpha,\eta}:\eta\in Y_\alpha\rangle$ of pairwise incompatible
  conditions from $\bbQ$. Then Antigeneric answers by picking a system
  $\bar{q}^\alpha=\langle q_{\alpha,\eta}: \eta\in Y_\alpha\rangle$ of
  conditions from $\bbQ$ satisfying 
\[p_{\alpha,\eta}\leq^\alpha_\pr q_{\alpha,\eta}\qquad \mbox{ for all }\eta\in
Y_\alpha.\]
At the end, Generic wins the play $\langle \bar{p}^\alpha,\bar{q}^\alpha:
\alpha<\lambda\rangle$ if and only if, letting $\bar{q}=\langle
q_{\alpha,\eta}:\alpha<\lambda\ \&\ \eta\in Y_\alpha\rangle$, 
\begin{enumerate}
\item[$(\boxdot)$] there is an aux-generic condition $p^*\geq p$ over 
  $\bar{q},D$. 
\end{enumerate}
\item A forcing notion $\bbQ$ is {\em $f$--semi-purely proper over an
  indexing sequence $\bar{Y}$ and a filter $D$\/} if for some sequence 
  $\bar{\leq}_\pr$ of binary relations on $\bbQ$,
  $(\bbQ,\leq,\bar{\leq}_\pr)$ is a forcing with the $f$--complete
  semi-purity and for every $p\in \bbQ$ Generic has a winning strategy in
  $\masgame$. We then say that {\em the sequence $\bar{\leq}_\pr$ witnesses
    the semi-pure properness of $\bbQ$}.
\item If $D$ is the club filter on $\lambda$, then we omit it and we write
  $\Game^{\rm main}_{\bar{Y}}(p,\bbQ,{\leq},\bar{\leq}_\pr)$ etc. If
  $\leq^\alpha_\pr=\leq_\pr$ for all $\alpha<\lambda$, then we write
  $\leq_\pr$ instead of $\bar{\leq}_\pr$, like in $\Game^{\rm
    main}_{\bar{Y}}(p,\bbQ, {\leq},{\leq}_\pr)$. If $f(\alpha)=\lambda$ for
  all $\alpha$, then we write $\lambda$ instead of $f$ (in phrases like
  $\lambda$--complete semi--purity etc).
\end{enumerate}
\end{definition}

\begin{observation}
If $f,g:\lambda\longrightarrow \lambda+1$ and $f\leq g$, then
``$g$--semi-purely proper'' implies ``$f$--semi-purely proper''.
\end{observation}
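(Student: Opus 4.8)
The plan is to show that a single sequence of pure orderings simultaneously witnesses both forms of semi-pure properness, so that no work beyond unwinding the definitions is needed. The crucial point is that the function $f$ enters Definition \ref{def1.3} only through clause (b) of Definition \ref{def1.1}: the games $\masgame$ and $\auxgame$, together with their winning conditions $(\boxdot)$ and $(\odot)$, are formulated purely in terms of the relations $\leq$ and $\bar{\leq}_\pr$ and make no reference whatsoever to $f$.

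First I would fix an indexing sequence $\bar{Y}$ and a normal filter $D$ over which $\bbQ$ is $g$--semi-purely proper, and let $\bar{\leq}_\pr$ be a witnessing sequence. By Definition \ref{def1.3}(5) this means precisely that $(\bbQ,\leq,\bar{\leq}_\pr)$ has $g$--complete semi-purity and that Generic has a winning strategy in $\masgame$ for every $p\in\bbQ$.

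Next I would verify that this very same $\bar{\leq}_\pr$ equips $(\bbQ,\leq,\bar{\leq}_\pr)$ with $f$--complete semi-purity. Clause (a) of Definition \ref{def1.1} does not mention $f$, so it transfers verbatim. In clause (b) the strategic $({<}\lambda)$--completeness of $(\bbQ,\leq)$ is likewise unchanged; and since $f\leq g$ gives $f(\alpha)\leq g(\alpha)$ for every $\alpha<\lambda$, any infinite cardinal $\kappa<f(\alpha)$ also satisfies $\kappa<g(\alpha)$. Hence the strategic $({\leq}\kappa)$--completeness of $(\bbQ,\leq^\alpha_\pr)$ supplied by the $g$--version holds a fortiori for all $\kappa<f(\alpha)$, which is exactly what the $f$--version demands.

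Finally, since $\masgame$ depends only on the data $\bar{Y},p,\bbQ,\leq,\bar{\leq}_\pr,D$ and is therefore literally the same game in both settings, the winning strategy Generic has in the $g$--witnessed context is already a winning strategy in the $f$--witnessed context. Thus $\bar{\leq}_\pr$ witnesses that $\bbQ$ is $f$--semi-purely proper over $\bar{Y}$ and $D$. I expect no genuine obstacle here: the statement is a monotonicity remark whose entire content is that lowering $f$ only weakens the completeness requirement in clause (b) while leaving the game-theoretic apparatus intact.
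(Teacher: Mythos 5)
Your proof is correct and is exactly the intended argument: the paper states this as an \emph{Observation} with no written proof precisely because the only content is the monotonicity you spell out, namely that $f\leq g$ weakens the completeness demand in clause (b) of Definition \ref{def1.1} while the games in Definition \ref{def1.3} never mention $f$, so the same witnessing sequence $\bar{\leq}_\pr$ and the same winning strategies carry over verbatim.
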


The proof of the following proposition may be considered as an introduction 
to the more complicated and general proof of Theorem \ref{thm1.6} dealing
with the iterations. 

\begin{proposition}
   \label{prop1.4}
   Assume that $f:\lambda\longrightarrow\lambda+1$,
   $\omega+\alpha<f(\alpha)$ for $\alpha<\lambda$ and $D$ is a normal filter
   on $\lambda$. Let $\bar{Y}=\langle Y_\alpha:\alpha<\lambda\rangle$ be an
   indexing sequence. If a forcing notion $\bbQ$ is $f$--semi-purely proper
   over $\bar{Y},D$, then it is $\lambda$--proper in the standard sense.
\end{proposition}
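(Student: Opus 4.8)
The plan is to verify standard-sense $\lambda$--properness directly from Definition \ref{comp}, using the winning strategies supplied by semi-pure properness as the engine of a single transfinite construction. I would take as the parameter $x$ the data $(\bar{\leq}_\pr,\bar{Y},D)$ together with a code for a winning strategy for Generic in the games $\masgame$. Given a model $N\prec(\cH(\chi),\in,<^*_\chi)$ with ${}^{<\lambda}N\subseteq N$, $|N|=\lambda$ and $\bbQ,x\in N$, and given $q\in N\cap\bbQ$, I would first build inside $N$ a continuous increasing filtration $N=\bigcup_{i<\lambda}N_i$ into elementary submodels $N_i\prec N$ of size $<\lambda$ with $q\in N_0$, $N_i\in N_{i+1}$ and $\langle N_j:j\leq i\rangle\in N_{i+1}$; such a filtration exists since ${}^{<\lambda}N\subseteq N$ and $\lambda$ is inaccessible, and the set of $i$ with $N_i\cap\lambda=i$ contains a club. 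I would then reduce the goal to the usual criterion: to produce an $(N,\bbQ)$--generic $p^*\geq q$ it suffices to find $p^*\geq q$ such that for every open dense $D'\subseteq\bbQ$ with $D'\in N$ the set $D'\cap N$ is predense above $p^*$; by the filtration every such $D'$ belongs to some $N_i$, so the dense sets can be organized stagewise as they enter the $N_i$.

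The heart of the construction is to run a single play of the main game $\masgame$ with $p=q$ in which Generic follows her winning strategy (chosen in $N$ by elementarity), while I play the role of Antigeneric. At stage $\alpha<\lambda$ Generic's strategy delivers a system $\bar{p}^\alpha=\langle p_{\alpha,\eta}:\eta\in Y_\alpha\rangle$ of pairwise incompatible conditions; I answer with a system $\bar{q}^\alpha=\langle q_{\alpha,\eta}:\eta\in Y_\alpha\rangle$ of $\alpha$--pure extensions, $p_{\alpha,\eta}\leq^\alpha_\pr q_{\alpha,\eta}$, chosen by an auxiliary sub-construction that diagonalizes against the dense sets and names appearing in $N_\alpha$ so far. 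Here the hypothesis $\omega+\alpha<f(\alpha)$ is exactly what licenses these sub-constructions: by clause (b) of Definition \ref{def1.1} the pure order $\leq^\alpha_\pr$ is strategically $({\leq}\kappa)$--complete for every infinite $\kappa<f(\alpha)$, so I may carry out and close off $\leq^\alpha_\pr$--increasing auxiliary runs of length up to $\omega+\alpha$, keeping everything inside the relevant $N_i$. Since Generic plays a winning strategy, at the end clause $(\boxdot)$ guarantees an aux-generic condition $p^*\geq q$ over the resulting $\bar{q}=\langle q_{\alpha,\eta}:\alpha<\lambda,\ \eta\in Y_\alpha\rangle$; this $p^*$ is the candidate generic condition.

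It remains to read off $(N,\bbQ)$--genericity of $p^*$ from aux-genericity. For this I would use that COM has a winning strategy in $\auxgame$ and confront it with plays in which INC uses his free $\leq$--moves $r_\alpha'\geq r_\alpha$ to meet the dense sets of $N$ enumerated stagewise. The winning clause $(\odot)$ is the mechanism that transfers information back to $N$: on the diagonal intersection $\mathop{\triangle}\limits_{\alpha<\lambda}A_\alpha$, which is $D$--large and hence meets the club of good stages $\gamma$ with $N_\gamma\cap\lambda=\gamma$, the guess $\eta_\gamma$ lands in $Y_\gamma$ and $q_{\gamma,\eta_\gamma}\leq^\gamma_\pr r_\gamma$. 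Combined with $p_{\gamma,\eta_\gamma}\leq^\gamma_\pr q_{\gamma,\eta_\gamma}$ and clause (a) of Definition \ref{def1.1}, this forces $r_\gamma$ (and hence the condition being approximated) to extend the antichain element $p_{\gamma,\eta_\gamma}$ that I arranged to decide the dense sets of $N_\gamma$ inside $N$. Letting $\gamma$ range over the $D$--large good set and using that every open dense $D'\in N$ occurs in some $N_\gamma$, I conclude that $D'\cap N$ is predense above $p^*$, which is the desired genericity.

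The main obstacle is the coordination of the two games with the filtration and the normal filter. The guesses $\eta_\gamma\in Y_\gamma$ in the auxiliary game are produced by COM's strategy, not chosen by me, so the delicate point is to set up the main-game bookkeeping (the correspondence between $\eta\in Y_\alpha$ and the state of $N_\alpha$, and the choice of the pure responses $\bar{q}^\alpha$) so that, on a $D$--large set of $\gamma$, the witnessed condition $q_{\gamma,\eta_\gamma}$ genuinely lies in $N_\gamma$ and captures the dense sets enumerated there; the normality of $D$ (closure under diagonal intersections) is what makes the good stages $D$--large and synchronizes the two games. Verifying that the pure sub-constructions of length $\omega+\alpha$ can always be completed and glued at limits, again via the strategic $({\leq}\kappa)$--completeness of $\leq^\alpha_\pr$ for $\kappa<f(\alpha)$, and that the limits taken at stage $\lambda$ remain conditions of $\bbQ$ (using strategic $({<}\lambda)$--completeness of $\leq$), are the remaining routine but essential checks.
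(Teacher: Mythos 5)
Your overall architecture matches the paper's: fix Generic's winning strategy in $\masgame$ inside $N$, play Antigeneric yourself so that the answers $\bar{q}^\alpha$ carry decision information and stay in $N$ (via ``$<^*_\chi$--first'' choices), extract the aux-generic condition $p^*$, and then run the auxiliary game against a putative bad extension $p^+\geq p^*$ to pull information back into $N$ at a limit point $\gamma$ of the diagonal intersection. However, the step where you make the answers ``capture'' the dense sets of $N$ is exactly where the plan breaks, and it is the point the paper's proof is engineered around. Antigeneric's answer $q_{\alpha,\eta}$ is constrained to be an $\alpha$--\emph{pure} extension of Generic's $p_{\alpha,\eta}$, and a pure extension cannot in general be pushed into a prescribed open dense set; likewise the conditions $r_\alpha'$ that INC uses in the auxiliary game to meet dense sets extend $p^+$ and therefore need not lie in $N$, so they are useless for witnessing predensity of $D'\cap N$. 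You also write that you ``arranged'' $p_{\gamma,\eta_\gamma}$ to decide the dense sets of $N_\gamma$ --- but $p_{\gamma,\eta_\gamma}$ is Generic's move, produced by her strategy, and Antigeneric has no control over it; he only controls the pure extensions $q_{\gamma,\eta}$.

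The paper's resolution is to work with names for ordinals rather than dense sets and to impose only a \emph{conditional} pure decidability: enumerate the $\bbQ$--names for ordinals in $N$ as $\langle\name{\tau}_\beta:\beta<\lambda\rangle$ and, at stage $\alpha$, use the strategic $({\leq}|\alpha|)$--completeness of $(\bbQ,\leq^\alpha_\pr)$ (this is where $\omega+\alpha<f(\alpha)$ enters) to choose $q_{\alpha,\eta}$ with $p_{\alpha,\eta}\leq^\alpha_\pr q_{\alpha,\eta}$ such that for each $\beta<\alpha$: if some condition $\alpha$--purely stronger than $q_{\alpha,\eta}$ forces a value to $\name{\tau}_\beta$, then $q_{\alpha,\eta}$ already does. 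Then, given $p^+\geq p^*$ forcing $\name{\tau}_\beta=\zeta$, INC plays $r_0'=p^+$ and $r_\alpha'=r_\alpha$ afterwards; at a limit $\gamma>\beta$ in the diagonal intersection the winning clause gives $q_{\gamma,\eta_\gamma}\leq^\gamma_\pr r_\gamma$, and since $r_\gamma\geq p^+$ forces $\name{\tau}_\beta=\zeta$, the conditional decidability yields that $q_{\gamma,\eta_\gamma}\in N$ already forces this value, whence $\zeta\in N$. Your filtration $\langle N_i:i<\lambda\rangle$ is not needed for any of this; a single $\lambda$--enumeration of the names in $N$ suffices. Unless you replace ``diagonalize against the dense sets'' by this conditional decidability (or an equivalent device compatible with purity), the genericity read-off does not close.
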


\begin{proof}
Let $\bar{\leq}_\pr$ be a sequence witnessing the semi-pure properness of 
$\bbQ$. Assume $N\prec (\cH(\chi),\in,<^*_\chi)$ satisfies   
\[{}^{<\lambda} N\subseteq N,\quad |N|=\lambda\quad\mbox{ and }\quad
(\bbQ,\leq,\bar{\leq}_\pr),\bar{Y},D\ldots \in N. \]
Let $p\in N\cap\bbQ$. Fix a winning strategy $\st\in N$ of Generic in
$\masgame$ and pick a list $\langle\name{\tau}_\alpha:\alpha<\lambda\rangle$
of all $\bbQ$--names for ordinals from $N$. 

Consider a play of $\masgame$ in which Generic uses $\st$ and Antigeneric
chooses his answers as follows. At stage $\alpha<\lambda$ of the play, after
Generic played $\bar{p}^\alpha=\langle p_{\alpha,\eta}:\eta\in
Y_\alpha\rangle$, Antigeneric picks the $<^*_\chi$--first sequence
$\bar{q}^\alpha= \langle q_{\alpha,\eta}:\eta\in Y_\alpha\rangle$ such that
for each $\eta\in Y_\alpha$:
\begin{enumerate}
\item[$(*)_\eta$]  $p_{\alpha,\eta}\leq^\alpha_\pr q_{\alpha,\eta}$,
\item[$(**)_\eta$] if $\beta<\alpha$ and there is a condition $q$
  $\alpha$--purely stronger than $q_{\alpha,\eta}$ and forcing a value to 
  $\name{\tau}_\beta$, then $q_{\alpha,\eta}$ already forces a value to
  $\name{\tau}_\beta$.  
\end{enumerate}
Note that since $(\bbQ,\leq^\alpha_\pr)$ is strategically
$({\leq}|\alpha|)$--complete, there are conditions $q\in \bbQ$ satisfying
$(*)_\eta+(**)_\eta$. One checks inductively that $\bar{p}^\alpha,
\bar{q}^\alpha \in N$ for all $\alpha<\lambda$ (remember $\st\in N$ and the
choice of ``the $<^*_\chi$--first''). The play $\langle \bar{p}^\alpha,
\bar{q}^\alpha: \alpha<\lambda\rangle$ is won by Generic, so there is a
condition $p^*\geq p$ which is aux-generic over $\bar{q}=\langle
q_{\alpha,\eta}:\alpha<\lambda\ \&\ \eta\in Y_\alpha\rangle$ and $D$. We
claim that $p^*$ is $(N,\bbQ)$--generic. So suppose towards contradiction
that $p^+\geq p^*$, $p^+\forces\name{\tau}_\beta=\zeta$, $\beta<\lambda$ but 
$\zeta\notin N$. Consider a play $\langle (r_\alpha,A_\alpha,\eta_\alpha,
r_\alpha'): \alpha<\lambda \rangle$ of ${\Game^{{\rm aux}}_{\bar{Y}}(p^*,
  \bar{q},\bbQ,\leq,\bar{\leq}_\pr,D)}$ in which COM follows her winning
strategy and INC plays: 
\begin{itemize}
\item $r_0'=p^+$, and for $\alpha>0$ he lets $r_\alpha'=r_\alpha$.
\end{itemize}
Let $\gamma\in \mathop{\triangle}\limits_{\alpha<\lambda} A_\alpha$ be a
limit ordinal greater than $\beta$. Since the play was won by COM, we have
$\eta_\gamma\in Y_\gamma$ and $q_{\gamma,\eta_\gamma}\leq^\gamma_\pr
r_\gamma$. Since $p^+\leq r_\gamma$, we know that $r_\gamma\forces
\name{\tau}_\beta=\zeta$ and hence (by $(**)_{\eta_\gamma}$)
$q_{\gamma,\eta_\gamma}\forces \name{\tau}_\beta=\zeta$. However,
$q_{\gamma,\eta_\gamma}\in N$, contradicting $\zeta\notin N$.  
\end{proof}

\begin{lemma}
  \label{lem1.5}
Assume that $\lambda$ is a regular uncountable cardinal,
$f:\lambda\longrightarrow\lambda+1$ and $\bar{\bbQ}=\langle
\bbP_\xi,\name{\bbQ}_\xi:\xi<\gamma \rangle$ is a $\lambda$--support
iteration such that for every $\xi<\lambda$:
\[\forces_{\bbP_\xi}\mbox{`` $(\name{\bbQ}_\xi,\leq,\bar{\leq}_\pr)$ is a 
  forcing notion with $f$--complete semi-purity ''.}\]
Let  $\cT=(T,\rk)$ be a standard $(w,1)^\gamma$--tree, $w\in
[\gamma]^{<\lambda}$, and let $\bar{p}=\langle p_t: t\in T\rangle$ be a tree
of conditions in $\bbP_\gamma$. Suppose that $\alpha<\lambda$ and $\Upsilon$ 
is a set of $\bbP_\gamma$--names for ordinals such that $|T|\cdot
|\Upsilon|<f(\alpha)$. Then there exists a tree of conditions
$\bar{q}=\langle q_t: t\in T \rangle$ such that  
\begin{enumerate}
\item[$(\circledast)_1$] $\bar{p}\leq \bar{q}$ and  if $t\in T$, $\xi\in
  w\cap \rk(t)$, then $q_t\rest\xi\forces_{\bbP_\xi} p_t(\xi)\leq_\pr^\alpha 
  q_t(\xi)$, and  
\item[$(\circledast)_2$] if $\name{\tau}\in\Upsilon$, $t\in T$,
  $\rk(t)=\gamma$ and there is a condition $q\in \bbP_\gamma$ such that 
\begin{itemize}
\item $q_t\leq q$, and $q\rest\xi\forces_{\bbP_\xi} q_t(\xi)\leq_\pr^\alpha 
  q(\xi)$ for all $\xi\in w$, and
\item $q$ forces a value to $\name{\tau}$, 
\end{itemize}
then $q_t$ forces a value to $\name{\tau}$.
\end{enumerate}
\end{lemma}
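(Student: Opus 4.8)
The plan is to fuse along the $\alpha$--pure order, building a $\leq^\alpha_\pr$--increasing sequence of trees of conditions that decides each name in $\Upsilon$ at each top node whenever an $\alpha$--pure extension can, and to pass through limits using strategic completeness of the pure order. For trees of conditions $\bar{p}^0,\bar{p}^1$ over $(T,\rk)$ write $\bar{p}^0\leq^\alpha_\pr\bar{p}^1$ iff $p^0_t\leq p^1_t$ and $p^1_t\rest\xi\forces_{\bbP_\xi}p^0_t(\xi)\leq^\alpha_\pr p^1_t(\xi)$ for all $t\in T$ and $\xi\in w\cap\rk(t)$; this is exactly the relation of $(\circledast)_1$. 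Set $\mu=|T|\cdot|\Upsilon|$. Since each iterand is strategically $({\leq}\nu)$--complete with respect to $\leq^\alpha_\pr$ for every infinite cardinal $\nu<f(\alpha)$, the $\lambda$--support iteration and, because $|T|<\lambda$, the coordinatewise pure order on trees of conditions over $(T,\rk)$ are strategically $({\leq}\nu)$--complete for every such $\nu$; here COM's strategy on the tree runs the iterands' strategies coordinatewise, making $<^*_\chi$--least choices so that restrictions of her answers are again her answers and coherence survives limits. I also record two monotonicity facts, both from transitivity of the orders: ``$q$ forces a value to $\name{\tau}$'' is preserved under $\leq$, while ``no condition $\alpha$--purely stronger than $q$ forces a value to $\name{\tau}$'' is preserved under $\leq^\alpha_\pr$.

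The technical heart is an amalgamation step. If $\bar{r}=\langle r_t:t\in T\rangle$ is a tree of conditions, $t\in T$, $\rk(t)=\gamma$, and $q\geq^\alpha_\pr r_t$ in $\bbP_\gamma$ (coordinatewise pure on $w$), then there is a tree of conditions $\bar{r}'\geq^\alpha_\pr\bar{r}$ with $r'_t=q$: for $s\vtl t$ put $r'_s=q\rest\rk(s)$, and for a node $t'$ with $t\wedge t'=s$ glue $q\rest\rk(s)$ (below $\rk(s)$) to $r_{t'}$ (from $\rk(s)$ on). The glueing is a condition of $\bbP_\gamma$ by the usual amalgamation of a $\bbP_{\rk(s)}$--condition with a tail in a $\lambda$--support iteration, and it is $\alpha$--purely above $r_{t'}$ because $q\rest\rk(s)\geq^\alpha_\pr r_{t'}\rest\rk(s)$; coherence of $\bar{r}'$ is immediate. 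Consequently ``some pure tree--extension of $\bar{r}$ decides $\name{\tau}$ at its $t$--coordinate'' is equivalent to ``some $q\geq^\alpha_\pr r_t$ forces a value to $\name{\tau}$''.

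Now fix an enumeration $\langle(t_i,\name{\tau}_i):i<\mu\rangle$ of all pairs with $\rk(t_i)=\gamma$ and $\name{\tau}_i\in\Upsilon$. If $\mu$ is infinite it is a cardinal $<f(\alpha)$, and I fix a winning strategy for COM in the $({\leq}\mu)$--completeness game for the pure order on trees of conditions played from $\bar{p}$ (if $\mu$ is finite the construction is a finite iteration needing no strategy). I build a $\leq^\alpha_\pr$--increasing play $\langle\bar{q}^i:i\leq\mu\rangle$ with $\bar{q}^0=\bar{p}$: at stage $i<\mu$, if some $q\geq^\alpha_\pr q^i_{t_i}$ forces a value to $\name{\tau}_i$, let INC propose the tree--extension deciding $\name{\tau}_i$ at $t_i$ supplied by the amalgamation step, otherwise let INC repeat $\bar{q}^i$; then COM answers by her strategy to give $\bar{q}^{i+1}$. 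Limit stages are handled by the strategy, and since COM wins the play reaches length $\mu+1$. Each $\bar{q}^i$ is a tree of conditions, hence so is $\bar{q}=\bar{q}^\mu$.

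Finally, $\bar{q}\geq^\alpha_\pr\bar{p}$ by construction, which is $(\circledast)_1$ (and $\bar{p}\leq\bar{q}$ as $\leq^\alpha_\pr\subseteq\leq$). For $(\circledast)_2$ fix a pair $(t_i,\name{\tau}_i)$. If at stage $i$ some pure extension decided $\name{\tau}_i$ at $t_i$, then $q^{i+1}_{t_i}$, hence $q_{t_i}\geq q^{i+1}_{t_i}$, forces a value to $\name{\tau}_i$, so $(\circledast)_2$ holds. Otherwise no $q\geq^\alpha_\pr q^i_{t_i}$ forces a value to $\name{\tau}_i$; since $q_{t_i}=q^\mu_{t_i}\geq^\alpha_\pr q^i_{t_i}$, transitivity gives that no $\alpha$--pure extension of $q_{t_i}$ forces a value to $\name{\tau}_i$, so the hypothesis of $(\circledast)_2$ is never met. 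The main obstacle is the completeness claim for the tree of conditions: verifying that COM's coordinatewise strategies assemble into a single winning strategy, i.e.\ that the ${<}\lambda$--many coordinatewise limits taken at a limit stage cohere into a genuine tree of conditions. This is exactly where $|T|<\lambda$, the $\lambda$--support, and the canonical $<^*_\chi$--choices are needed.
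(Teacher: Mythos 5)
Your proof is correct and follows essentially the same route as the paper's: enumerate the $\kappa=|T|\cdot|\Upsilon|$ many pairs $(t_i,\name{\tau}_i)$, build a $\leq^\alpha_\pr$--increasing (on $w$; ordinary $\leq$ off $w$) chain of trees of conditions of length $\kappa+1$, at stage $i$ pass to a pure extension deciding $\name{\tau}_i$ at $t_i$ when one exists, and get through limits by running the coordinatewise completeness games with $<^*_\chi$--least choices. The only difference is presentational: you factor out the ``strategic completeness of the poset of trees of conditions'' and the single-node-to-tree amalgamation as explicit sub-lemmas, where the paper inlines both via the interleaved sequences $\bar{q}^i,\bar{r}^i$ and the stipulation that the coordinatewise strategies play $\name{\emptyset}_{\name{\bbQ}_\xi}$ as long as the opponent does (which is how it handles coordinates entering the domain mid-construction).
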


\begin{proof}
Let $\kappa=|T|\cdot |\Upsilon|<f(\alpha)$ (and we may assume $\kappa$ is
infinite as otherwise arguments are trivial). Let $\leq^\pr_w$ be a binary
relation on $\bbP_\gamma$ defined by 

$p\leq^\pr_w q$ if and only if 

$p\leq_{\bbP_\gamma} q$ and for each $\xi\in w$,
$q\rest\xi\forces_{\bbP_\xi}p(\xi)\leq_\pr^\alpha q(\xi)$.  

\noindent The relation $\leq^\pr_w$ is extended to trees of conditions in
the natural way. 

For $\xi\in \gamma\setminus w$ let $\name{\st}^0_\xi$ be a
$\bbP_\xi$--name for a wining strategy of Complete in $\Game^{\kappa+1}_0
\big((\name{\bbQ}_\xi,\leq),\name{\emptyset}_{\name{\bbQ}_\xi}\big)$ such
that it instructs her to play $\name{\emptyset}_{\name{\bbQ}_\xi}$ as long
as Incomplete plays $\name{\emptyset}_{\name{\bbQ}_\xi}$. For $\xi\in w$ let 
$\name{\st}^1_\xi$ be a name for a similar strategy for the game
$\Game^{\kappa+1}_0\big((\name{\bbQ}_\xi,\leq_\pr^\alpha),
\name{\emptyset}_{\name{\bbQ}_\xi}\big)$. 

Let $\langle (t_i,\name{\tau}_i):i<\kappa\rangle$ list all members of
$\{t\in T:\rk(t)=\gamma\}\times\Upsilon$ (with possible repetitions). By
induction on $i\leq\kappa$ we choose trees of conditions $\bar{q}^i= 
\langle q^i_t:t\in T\rangle$ and $\bar{r}^i=\langle r^i_t:t\in T\rangle$
such that  
\begin{enumerate}
\item[$(\alpha)$] $\bar{p}\leq^\pr_w \bar{q}^0$, $\bar{q}^i\leq^\pr_w
\bar{r}^i \leq^\pr_w \bar{q}^j\leq^\pr_w\bar{r}^j$ for $i<j\leq\kappa$, 
\item[$(\beta)$] for each $t\in T$, $j\leq\kappa$ and $\xi\in
  \rk(t)\setminus w$, 
\[\begin{array}{ll}
q^j_t\rest\xi\forces_{\bbP_\xi}&\mbox{`` the sequence }\langle (q^i_t(\xi),
r^i_t(\xi): i\leq j\rangle\mbox{ is a legal partial play of }\\  
&\quad\Game_0^{\kappa+1}\big((\name{\bbQ}_\xi,\leq),
\name{\emptyset}_{\name{\bbQ}_\xi}\big)\mbox{ in which Complete follows
}\name{\st}^0_\xi\mbox{ '',}
\end{array}\]  
\item[$(\gamma)$] for each $t\in T$, $j\leq\kappa$ and $\xi\in \rk(t)\cap
  w$, 
\[\begin{array}{ll}
q^j_t\rest\xi\forces_{\bbP_\xi}&\mbox{`` the sequence }\langle (q^i_t(\xi),
r^i_t(\xi): i\leq j\rangle\mbox{ is a legal partial play of }\\   
&\quad\Game_0^{\kappa+1}\big((\name{\bbQ}_\xi,\leq_\pr^\alpha),
\name{\emptyset}_{\name{\bbQ}_\xi}\big)\mbox{ in which Complete follows
}\name{\st}^1_\xi\mbox{ '',}
\end{array}\]  
\item[$(\delta)$] for each $i<\kappa$, if there is a condition
  $q\in\bbP_\gamma$ such that 
\begin{enumerate}
\item[(a)] $q^i_{t_i}\leq^\pr_w q$, and 
\item[(b)] $q$ forces a value to $\name{\tau}_i$, 
\end{enumerate}
then already $q^i_{t_i}$ forces the value to $\name{\tau}_i$.
\end{enumerate}
So suppose we have defined $\bar{q}^j,\bar{r}^j$ for $j<i$. Stipulating
$\bar{r}^{-1}=\bar{p}$, $t_\kappa=t_0$, and $\name{\tau}_\kappa=
\name{\tau}_0$ we ask if there is a condition $q\in\bbP_\gamma$ such that
$r^j_{t_i}\leq^\pr_w q$ for all $j<i$ which forces a value to
$\name{\tau}_i$. If there are such conditions, let $q^i_{t_i}$ be one of
them. Otherwise let $q^i_{t_i}$ be any $\leq^\pr_w$--bound to $\{r_{t_i}^j:
j<i\}$ (there is such a bound by $(\beta)+(\gamma)$). Then for $t\in 
T\setminus\{t_i\}$ define $q^i_t$ so that letting $s=t\cap t_i$: 
\begin{itemize}
\item if $\xi<\rk(s)$, then $q^i_t(\xi)=q^i_{t_i}(\xi)$,
\item if $\rk(s)\leq \xi<\rk(t)$, $\xi\notin w$, then $q^i_t(\xi)$ is the 
  $<^*_\chi$--first $\bbP_\xi$--name such that
\[q^i_t\rest\xi\forces_{\bbP_\xi}\mbox{`` $q^i_t(\xi)$ is a $\leq$--upper
  bound to $\{r^j_t(\xi):j<i\}$ ''},\]
\item if $\rk(s)\leq \xi<\rk(t)$, $\xi\in w$, then $q^i_t(\xi)$ is the
  $<^*_\chi$--first $\bbP_\xi$--name such that
\[q^i_t\rest\xi\forces_{\bbP_\xi}\mbox{`` $q^i_t(\xi)$ is a
  $\leq_\pr^\alpha$--upper bound to $\{r^j_t(\xi):j<i\}$ ''}.\]
\end{itemize}
It should be clear that the above demands correctly define a tree of
conditions $\bar{q}^i=\langle q^i_t:t\in T\rangle$ (note the choice of ``the
$<^*_\chi$--first names''). Finally, we choose $\bar{r}^i$ so that (the
respective instances of) conditions $(\beta)+(\gamma)$ are satisfied. To
ensure we end up with a tree of conditions, at each coordinate we choose
``the $<^*_\chi$--first names for the answers given by the respective
strategies''.

After the inductive process is completed, put $\bar{q}=\bar{q}^\kappa$. 
\end{proof}

\begin{theorem}
  \label{thm1.6}
Assume that $\lambda$ is a strongly inaccessible cardinal,
$f:\lambda\longrightarrow \lambda+1$ and $\bar{\kappa}=\langle
\kappa_\alpha:\alpha<\lambda\rangle$ is a sequence of infinite cardinals
such that $(\kappa_\alpha)^{|\alpha|}<f(\alpha)$ for all $\alpha<\lambda$,
and suppose also that $D$ is a normal filter on $\lambda$. For 
$\xi<\gamma$ let $\bar{Y}^\xi=\langle Y_\alpha^\xi:\alpha<\lambda\rangle$ be
an indexing sequence such that $|Y_\alpha^\xi|\leq\kappa_\alpha$. Let
$\bar{\bbQ}=\langle\bbP_\xi,\name{\bbQ}_\xi:\xi<\gamma\rangle$ be a
$\lambda$--support iteration such that  
\[\forces_{\bbP_\xi}\mbox{`` $\name{\bbQ}_\xi$ is $f$--semi-purely proper over
  $\bar{Y}^\xi,D^{\bV^{\bbP_\xi}}$ ''}\]
for every $\xi<\gamma$ (where $D^{\bV^{\bbP_\xi}}$ is the normal filter on
$\lambda$ generated in $\bV^{\bbP_\xi}$ by $D$).\\  
Then $\bbP_\gamma=\lim(\bar{\bbQ})$ is $\lambda$--proper in the standard
sense. 
\end{theorem}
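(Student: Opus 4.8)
The plan is to lift the proof of Proposition \ref{prop1.4} from a single forcing to the iteration $\bbP_\gamma$, replacing the single aux-generic condition $p^*$ by a standard tree of conditions produced by a fusion of length $\lambda$, and replacing the direct decidability argument by Lemma \ref{lem1.5}. First I would fix a model $N\prec(\cH(\chi),\in,<^*_\chi)$ with ${}^{<\lambda}N\subseteq N$, $|N|=\lambda$ and $\bar{\bbQ},\langle\bar{Y}^\xi:\xi<\gamma\rangle,D,f,\bar{\kappa}\in N$, take $q\in N\cap\bbP_\gamma$, and enumerate as $\langle\name{\tau}_\alpha:\alpha<\lambda\rangle$ all $\bbP_\gamma$-names for ordinals in $N$. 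By elementarity I would also fix, inside $N$, a $\bbP_\xi$-name $\name{\st}_\xi$ for a winning strategy of Generic in the main game $\masgamezero$ at each coordinate $\xi<\gamma$. The aim is to build an $(N,\bbP_\gamma)$-generic condition $p^*\geq q$.

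The heart of the argument is a continuous, coordinatewise pure-increasing (in the sense of the relation $\leq^\pr_w$ introduced in the proof of Lemma \ref{lem1.5}) sequence $\langle\bar{p}^\zeta:\zeta<\lambda\rangle$ of standard trees of conditions in $\bbP_\gamma$, all in $N$, together with an increasing continuous sequence of supports $w^\zeta$ whose union $M=\bigcup_{\zeta<\lambda}w^\zeta$ contains $\dom(q)$. At stage $\zeta$ I would perform three coordinated tasks. First, for each active coordinate $\xi\in w^\zeta$, advance the main game at $\xi$ by one step: feeding the current position into $\name{\st}_\xi$ returns a system of pairwise incompatible conditions indexed by $Y^\xi_\zeta$, and these force the tree $T^{\zeta+1}$ to branch above the appropriate nodes according to $Y^\xi_\zeta$. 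Second, play the role of Antigeneric in each of these games by selecting the $<^*_\chi$-first system of $\zeta$-purely stronger conditions that in addition decides $\name{\tau}_\beta$ for $\beta<\zeta$, exactly as in clauses $(*),(**)$ of Proposition \ref{prop1.4}. Third, apply Lemma \ref{lem1.5} with $\alpha=\zeta$, the tree $\cT^{\zeta+1}$, and $\Upsilon=\{\name{\tau}_\beta:\beta\leq\zeta\}$ to pass to a pure-stronger tree of conditions that decides those names. The hypothesis $(\kappa_\alpha)^{|\alpha|}<f(\alpha)$ together with $|Y^\xi_\alpha|\leq\kappa_\alpha$ keeps $|T^\zeta|<f(\zeta)$, so that Lemma \ref{lem1.5} applies and, at limit $\zeta$, the strategic $({\leq}\kappa)$-completeness of each $\leq^\alpha_\pr$ supplies the coordinatewise pure bounds needed to continue; the ``$<^*_\chi$-first'' bookkeeping keeps every $\bar{p}^\zeta$ in $N$.

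After the fusion, for each coordinate $\xi\in M$ the systems recorded at $\xi$ across the $\lambda$ stages constitute a full play of $\masgamezero$ in which Generic followed $\name{\st}_\xi$; since $\name{\st}_\xi$ is winning, clause $(\boxdot)$ supplies (a $\bbP_\xi$-name for) an aux-generic condition over the recorded system. Threading these coordinatewise aux-generic conditions through the tree yields the desired $p^*\geq q$. To verify that $p^*$ is $(N,\bbP_\gamma)$-generic I would proceed as in Proposition \ref{prop1.4}: supposing some $p^+\geq p^*$ forced $\name{\tau}_\beta=\zeta^*\notin N$, I would run the auxiliary games $\auxk$ at all coordinates with COM following her winning strategies against the Antigeneric move read off from $p^+$, and pick a limit ordinal $\delta$ lying in every relevant diagonal intersection and above $\beta$. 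Clause $(\odot)$ then locates, coordinate by coordinate, a tree-node $q_t\in\bbP_\gamma\cap N$ that is $\leq^\pr_w$-below a condition deciding $\name{\tau}_\beta$; by $(\circledast)_2$ of Lemma \ref{lem1.5}, $q_t$ itself forces $\name{\tau}_\beta=\zeta^*$, contradicting $\zeta^*\notin N$.

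The main obstacle is the bookkeeping that keeps the $\lambda$ simultaneous main games coherent inside a single standard tree of conditions. The branching that Generic demands at different coordinates and at successive stages must be merged into one tree, the pure strengthenings at distinct coordinates must be arranged not to interfere (which is precisely why the relations $\leq^\alpha_\pr$ are required to be strategically $({\leq}\kappa)$-complete for all infinite $\kappa<f(\alpha)$), and the limit stages of the fusion must admit coordinatewise pure bounds. It is exactly here that the cardinal arithmetic $(\kappa_\alpha)^{|\alpha|}<f(\alpha)$ is consumed: it simultaneously bounds the size of each tree of conditions (so Lemma \ref{lem1.5} is applicable at stage $\zeta$) and guarantees enough strategic completeness of $\bar{\leq}_\pr$ to take the required bounds. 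Coordinating the choice of the ``closed'' limit ordinal $\delta$ uniformly across all coordinates in $M$, using normality of $D$ and the diagonal-intersection clause of the aux game, is the other delicate point.
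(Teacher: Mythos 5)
Your proposal follows essentially the same route as the paper's proof: a $\lambda$-stage fusion of standard trees of conditions in which the main games are played coordinatewise with Generic following her strategies, Lemma \ref{lem1.5} supplies the purely-stronger deciding trees from which Antigeneric's moves are read off, the final condition is assembled coordinatewise from aux-generic conditions, and genericity is verified by running the auxiliary games against a putative bad extension and meeting the relevant diagonal intersections at a limit stage. The only point you gloss over --- and which the paper handles with an auxiliary increasing sequence of single conditions $r_\delta$ that absorb the trees at the not-yet-activated coordinates (clauses $(*)_6$ and $(*)_{11}$) --- is how the coordinatewise aux-generic conditions are ``threaded through'' so that the deciding tree node at the critical limit stage is actually below the condition built from them.
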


\begin{proof}
The proof is very similar to that of \cite[Theorem 2.7]{RoSh:888}.

Abusing our notation, the names for the forcing relation and a witness for
the semi-pure properness of $\name{\bbQ}_\xi$ will be denoted $\leq$ and
$\bar{\leq}_\pr=\langle\leq_\pr^\alpha:\alpha<\lambda\rangle$,
respectively. For each $\xi<\gamma$ let $\name{\st}^0_\xi$ be 
the $<^*_\chi$--first $\bbP_\xi$--name for a winning strategy of Complete
in $\Game_0^\lambda(\name{\bbQ}_\xi,\name{\emptyset}_{\name{\bbQ}_\xi})$ such
that it instructs Complete to play $\name{\emptyset}_{\name{\bbQ}_\xi}$ as
long as her opponent plays $\name{\emptyset}_{\name{\bbQ}_\xi}$.   

Let $N\prec (\cH(\chi),\in,<^*_\chi)$ be such that ${}^{<\lambda}N\subseteq
N$, $|N|=\lambda$ and $\bar{\bbQ},D,\langle\bar{Y}^\xi,(\dbQ_\xi,
\leq,\bar{\leq}_\pr):\xi<\gamma\rangle,\ldots\in N$. Let $p\in N\cap
\bbP_\gamma$ and let $\langle \name{\tau}_\alpha:\alpha<\lambda\rangle$ list
all $\bbP_\gamma$--names for ordinals from $N$. Note that if $\xi\in\gamma
\cap N$, then $\name{\st}^0_\xi\in N$.  

By induction on $\delta<\lambda$ we will choose
\begin{enumerate}
\item[$(\otimes)_\delta$] $\cT_\delta,w_\delta,r_\delta^-,r_\delta,
  \bar{p}^\delta_*, \bar{q}^\delta_*$ and $\name{\bar{p}}_{\delta,\xi},
  \name{\bar{q}}_{\delta,\xi},\name{\st}_\xi$ for $\xi\in N\cap \gamma$
\end{enumerate}
so that the following demands are satisfied. 

\begin{enumerate}
\item[$(*)_0$] All objects listed in $(\otimes)_\delta$ belong to $N$. After
  stage $\delta<\lambda$ of the construction, these objects are known for
  $\delta$ and $\xi\in w_\delta$.
\item[$(*)_1$] $r_\delta^-,r_\delta\in\bbP_\gamma$, $r^-_0(0)=r_0(0)=p(0)$,
  $w_\delta\subseteq \gamma$, $|w_\delta|=|\delta+1|$, $w_0=\{0\}$,
  $w_\delta\subseteq w_{\delta+1}$, and if $\delta$ is limit then $w_\delta
  =\bigcup\limits_{\alpha<\delta} w_\alpha$, and
\[\bigcup\limits_{\alpha<\lambda} \dom(r_\alpha)=
\bigcup\limits_{\alpha<\lambda} w_\alpha=N\cap\gamma.\] 
\item[$(*)_2$] For each $\alpha<\delta<\lambda$ we have $\big(\forall \xi\in
  w_{\alpha+1}\big)\big(r_\alpha(\xi)=r_\delta(\xi)\big)$ and $p\leq
  r^-_\alpha\leq r_\alpha\leq r^-_\delta\leq r_\delta$.
\item[$(*)_3$] If $\xi\in (\gamma\setminus w_\delta)\cap N$, then 
\[\begin{array}{ll}
r_\delta\rest\xi\forces&\mbox{`` the sequence }\langle r^-_\alpha(\xi),
r_\alpha(\xi):\alpha\leq\delta\rangle\mbox{ is a legal partial play of }\\
&\quad\Game_0^\lambda\big(\name{\bbQ}_\xi,
\name{\emptyset}_{\name{\bbQ}_\xi}\big)\mbox{ in which Complete follows
}\name{\st}^0_\xi\mbox{ ''}  
\end{array}\]
and if $\xi\in w_{\delta+1}\setminus w_\delta$, then $\name{\st}_\xi\in N$
is a $\bbP_\xi$--name for a winning strategy of Generic in
$\masgamexi$. (And $\st_0\in N$ is a winning strategy of Generic in
$\Game^{{\rm main}}_{\bar{Y}^0}(p(0),\bbQ_0,\leq,\bar{\leq}_\pr,D)$.)     
\item[$(*)_4$] $\cT_\delta=(T_\delta,\rk_\delta)$ is a standard $(w_\delta,
1)^\gamma$--tree, $T_\delta=\bigcup\limits_{\alpha\leq\gamma}
\prod\limits_{\xi\in w_\delta\cap\alpha} Y_\delta^\xi$ (so $T_\delta$
consists of all sequences $\bar{t}=\langle t_\xi:\xi\in w_\delta\cap \alpha
\rangle$ where $\alpha\leq\gamma$ and $t_\xi\in Y^\xi_\delta$).
\item[$(*)_5$] $\bar{p}^\delta_*=\langle p^\delta_{*,t}:t\in T_\delta
\rangle$ and $\bar{q}^\delta_*=\langle q^\delta_{*,t}:t\in T_\delta\rangle$
are standard trees of conditions, $\bar{p}^\delta_*\leq\bar{q}^\delta_*$.  
\item[$(*)_6$] For $t\in T_\delta$ we have that $\dom(p^\delta_{*,t})=
  \big(\dom(p)\cup\bigcup\limits_{\alpha<\delta}\dom(r_\alpha)\cup
  w_\delta\big) \cap \rk_\delta(t)$ and for each $\xi\in
  \dom(p^\delta_{*,t})\setminus w_\delta$: 
\[\begin{array}{ll}
p^\delta_{*,t}\rest\xi\forces_{\bbP_\xi}&\mbox{`` if the set } \{
r_\alpha(\xi):\alpha<\delta\}\cup\{p(\xi)\}\mbox{ has an upper bound in
}\name{\bbQ}_\xi,\\  
&\mbox{\quad then $p^\delta_{*,t}(\xi)$ is such an upper bound ''.}
  \end{array}\]
\item[$(*)_7$] For $\xi\in N\cap\gamma$, $\name{\bar{p}}_{\delta,\xi} =\langle
\name{p}_{\delta,\eta}^\xi:\eta\in Y_\delta^\xi\rangle$ and
$\name{\bar{q}}_{\delta,\xi}=\langle \name{q}_{\delta,\eta}^\xi: \eta\in
Y_\delta^\xi\rangle$ are $\bbP_\xi$--names for systems of conditions in
$\name{\bbQ}_\xi$ indexed by $Y_\delta^\xi$. 
\item[$(*)_8$] If $\xi\in w_{\beta+1}\setminus w_\beta$, $\beta<\lambda$,
  then 
\[\begin{array}{r}
\forces_{\bbP_\xi}\mbox{`` }\langle\name{\bar{p}}_{\alpha,\xi},
\name{\bar{q}}_{\alpha,\xi}:\alpha<\lambda \rangle\mbox{ is a play of
}\masgamezero(r_\beta(\xi),\name{\bbQ}_\xi,\leq,\bar{\leq}_\pr,
D^{\bV^{\bbP_\xi}})\\    
\mbox{ in which Generic uses $\name{\st}_\xi$ ''.}
  \end{array}\]
\item[$(*)_9$] If $t\in T_\delta$, $\rk_\delta(t)=\xi<\gamma$, then for
  each $\eta\in Y_\delta^\xi$
\[q^\delta_{*,t}\forces_{\bbP_\xi}\mbox{`` } \name{p}^\xi_{\delta,\eta}
=p^\delta_{*,t\cup\{\langle\xi,\eta\rangle\}}(\xi)\mbox{ and } 
\name{q}^\xi_{\delta,\eta}=q^\delta_{*,t\cup\{\langle\xi,\eta\rangle\}}(\xi)  
\mbox{ ''.}\]  
\item[$(*)_{10}$] If $t\in T_\delta$, $\rk_\delta(t)=\gamma$ and $\alpha< 
  \delta$ and there is a condition $q\in\bbP_\gamma$ such that
\begin{enumerate}
\item[(a)] $q^\delta_{*,t}\leq q$, and 
\item[(b)] $q\rest\xi\forces_{\bbP_\xi} q^\delta_{*,t}(\xi)\leq_\pr^\delta q(\xi)$
  for all $\xi\in w_\delta$ and
\item[(c)] $q$ forces a value to $\name{\tau}_\alpha$, 
\end{enumerate}
then already the condition $q^\delta_{*,t}$ forces the value to 
$\name{\tau}_\alpha$. 
\item[$(*)_{11}$] $\dom(r_\delta^-)=\dom(r_\delta)= \bigcup\limits_{t\in
    T_\delta} \dom(q^\delta_{*,t})$ and if $t\in T_\delta$, $\xi\in
  \dom(r_\delta)\cap \rk_\delta(t)\setminus w_\delta$, and $q^\delta_{*,t}
  \rest\xi\leq q\in\bbP_\xi$, $r_\delta\rest\xi\leq q$, then   
\[\begin{array}{ll}
q\forces_{\bbP_\xi}&\mbox{`` if the set }\{r_\alpha(\xi):\alpha<\delta\}
\cup\{q^\delta_{*,t}(\xi), p(\xi)\}\mbox{ has an upper bound in }  
\name{\bbQ}_\xi,\\ 
&\mbox{\quad then $r_\delta^-(\xi)$ is such an upper bound ''.}
  \end{array}\]
\end{enumerate}

We start with fixing an increasing continuous sequence $\langle w_\alpha:
\alpha<\lambda\rangle$ of subsets of $N\cap\gamma$ such that the demands of
$(*)_1$ are satisfied. Now, by induction on $\delta<\lambda$ we choose the
other objects. So assume that we have defined all objects listed in
$(\otimes)_\alpha$ for $\alpha<\delta$.

To ensure $(*)_0$, whenever we say ``choose an $X$ such that$\ldots$'' we
mean ``choose the $<^*_\chi$--first $X$ such that$\ldots$''. This convention
will guarantee that our choices are from $N$.

If $\delta$ is a successor ordinal and $\xi\in w_\delta\setminus
w_{\delta-1}$, then let $\name{\st}_\xi\in N$ be a $\bbP_\xi$--name for a
winning strategy of Generic in $\masgamezero(r_{\delta-1}(\xi),
\name{\bbQ}_\xi,\leq,\bar{\leq}_\pr,D^{\bV^{\bbP_\xi}})$. We also pick
$\name{\bar{p}}_{\alpha,\xi},\name{\bar{q}}_{\alpha,\xi}$ for $\alpha<
\delta$ so that $(*)_7+(*)_8$ hold (note that we already know $r_{\delta-1}
(\xi)$ and by $(*)_2$ it is going to be equal to $r_\delta(\xi)$).     

Clause $(*)_4$ fully describes $\cT_\delta$. Note that, by the assumptions
on $\bar{Y},\bar{\kappa}$,
\begin{enumerate}
\item[$(*)_{12}$] $|T_\delta|\leq (\kappa_\delta)^{|\delta|}
  <f(\delta)$ so also $|T_\delta|\cdot |\delta|<f(\delta)$. 
\end{enumerate}
For each $\xi\in w_\delta$ we choose a $\bbP_\xi$--name
$\name{\bar{p}}_{\delta,\xi}$ such that  
\[\begin{array}{ll}
\forces_{\bbP_\xi}&\mbox{`` }\name{\bar{p}}_{\delta,\xi}=\langle
\name{p}_{\delta,\eta}^\xi:\eta\in Y_\delta^\xi\rangle\mbox{ is given to  
Generic by }\name{\st}_\xi\mbox{ as an answer to}\\ 
&\quad\langle\name{\bar{p}}_{\alpha,\xi},\name{\bar{q}}_{\alpha,\xi}:\alpha<
\delta\rangle\mbox{ in the game }\Game^{\rm main}_{\bar{Y}^\xi}(
r_\beta(\xi), \name{\bbQ}_\xi,\leq,\bar{\leq}_\pr, D^{\bV^{\bbP_\xi}}),
\mbox{ ''} 
\end{array}\]
where $\beta<\delta$ is such that $\xi\in w_{\beta+1}\setminus
w_\beta$. (Note that for each $\xi\in w_\delta$ and distinct
$\eta_0,\eta_1\in Y_\delta^\xi$  we have $\forces_{\bbP_\xi}$`` the
conditions $\name{p}^\xi_{\delta,\eta_0}, \name{p}^\xi_{\delta,\eta_1}$ are  
incompatible''.) Next we choose a tree of conditions $\bar{p}^\delta_*= 
\langle p^\delta_{*,t}: t\in T_\delta\rangle$ such that for each $t\in
T_\delta$: 
\begin{itemize}
\item $\dom(p^\delta_{*,t})=\big(\dom(p)\cup \bigcup\limits_{\alpha<\delta} 
  \dom(r_\alpha) \cup w_\delta\big) \cap \rk_\delta(t)$ and 
\item for $\xi\in\dom(p^\delta_{*,t})\setminus w_\delta$,
  $p^\delta_{*,t}(\xi)$ is the $<^*_\chi$--first $\bbP_\xi$--name for a
  condition in $\name{\bbQ}_\xi$ such that  
\[\begin{array}{ll}
p^\delta_{*,t}\rest\xi\forces_{\bbP_\xi}&\mbox{`` if the set } \{
r_\alpha(\xi):\alpha<\delta\}\cup\{p(\xi)\}\mbox{ has an upper bound in
}\name{\bbQ}_\xi,\\  
&\mbox{\quad then $p^\delta_{*,t}(\xi)$ is such an upper bound '',}
\end{array}\]
\item $p^\delta_{*,t}(\xi)=\name{\bar{p}}^\xi_{\delta,(t)_\xi}$ for 
  $\xi\in \dom(p^\delta_{*,t})\cap w_\delta$. 
\end{itemize}
Because of $(*)_{12}$ we may use Lemma \ref{lem1.5} to pick a tree of
conditions $\bar{q}^\delta_*=\langle q^\delta_{*,t}:t\in T_\delta\rangle$
such that  
\begin{itemize}
\item $\bar{p}^\delta_*\leq\bar{q}^\delta_*$, 
\item if $t\in T_\delta$, $\xi\in w_\delta\cap \rk_\delta(t)$, then 
  $q^\delta_{*,t}\rest \xi\forces_{\bbP_\xi} p^\delta_{*,t}(\xi)
  \leq_{\pr}^\delta q^\delta_{*,t}(\xi)$,
\item if $t\in T_\delta$, $\rk_\delta(t)=\gamma$ and $\alpha<\delta$ and
  there is a condition $q\in \bbP_\gamma$ such that
\begin{enumerate}
\item[(a)] $q^\delta_{*,t}\leq q$, and
\item[(b)] $q\rest\xi\forces_{\bbP_\xi} q^\delta_{*,t}(\xi)\leq_\pr^\delta
  q(\xi)$ for all $\xi\in w_\delta$ and
\item[(c)] $q$ forces a value to $\name{\tau}_\alpha$, 
\end{enumerate}
then $q^\delta_{*,t}$ forces a value to $\name{\tau}_\alpha$. 
\end{itemize}
Note that if $\xi\in w_\delta$, $t\in T_\delta$, $\rk_\delta(t)=\xi$ and
$\eta_0,\eta_1\in Y_\delta^\xi$ are distinct, then 
\[q^\delta_{*,t}\forces_{\bbP_\xi}\mbox{`` the conditions
$q^\delta_{*,t\cup\{\langle\xi,\eta_0\rangle\}}(\xi),
q^\delta_{*,t\cup\{\langle\xi,\eta_1\rangle\}}(\xi)$ are incompatible ''.}\] 
Therefore we may choose $\bbP_\xi$--names $\name{q}^\xi_{\delta,\eta}$ (for
$\xi\in w_\delta$) such that
\begin{itemize}
\item $\forces_{\bbP_\xi}\mbox{``}\name{\bar{q}}_{\delta,\xi}=\langle
  \name{q}^\xi_{\delta,\eta}:\eta\in Y_\delta^\xi\rangle$ is a system of
    conditions in $\name{\bbQ}_\xi$ indexed by $Y_\delta^\xi$'',
\item $\forces_{\bbP_\xi}\mbox{`` }(\forall \eta\in Y_\delta^\xi)( 
\name{p}^\xi_{\delta,\eta} \leq_\pr^\delta \name{q}^\xi_{\delta,\eta})$ '',  
\item if $t\in T_\delta$, $\rk_\delta(t)>\xi$, then $q^\delta_{*,t\rest \xi}
  \forces_{\bbP_\xi} q^\delta_{*,t}(\xi)=\name{q}^\xi_{\delta,(t)_\xi}$. 
\end{itemize}
Finally, we define $r^-_\delta,r_\delta\in\bbP_\gamma$ so that
\[\dom(r^-_\delta)=\dom(r_\delta)=\bigcup\limits_{t\in T_\delta}\dom(
q^\delta_{*,t})\]
 and 
\begin{itemize}
\item $r_0^-(0)=r_0(0)=p(0)$,
\item if $\xi\in w_{\alpha+1}$, $\alpha<\delta$, then $r^-_\delta(\xi) =
  r_\delta(\xi)=r_\alpha(\xi)$, 
\item if $\xi\in\dom(r^-_\delta)\setminus w_\delta$, then $r^-_\delta(\xi)$
  is the $<^*_\chi$--first $\bbP_\xi$--name for an element of
  $\name{\bbQ}_\xi$ such that   
\[\begin{array}{ll}
r^-_\delta\rest\xi\forces_{\bbP_\xi}&\mbox{`` }r^-_\delta(\xi)\mbox{ is an
upper bound of }\{r_\alpha(\xi):\alpha<\delta\}\cup\{p(\xi)\}\mbox{ and }\\  
&\mbox{\quad if }t\in T_\delta,\ \ \rk_\delta(t)>\xi,\mbox{ and }
q^\delta_{*,t}\rest\xi\in\name{G}_{\bbP_\xi}\mbox{ and the set}\\
&\quad\{r_\alpha(\xi):\alpha<\delta\}\cup\{q^\delta_{*,t}(\xi),
  p(\xi)\}\mbox{ has an upper bound in }\name{\bbQ}_\xi,\\ 
&\mbox{\quad then $r^-_\delta(\xi)$ is such an upper bound '',}
  \end{array}\]
and $r_\delta(\xi)$ is the $<^*_\chi$--first $\bbP_\xi$--name for an element 
  of $\name{\bbQ}_\xi$ such that 
\[\begin{array}{ll}
r_\delta\rest\xi\forces_{\bbP_\xi}&\mbox{`` }r_\delta(\xi)\mbox{ is given to
  Complete by }\name{\st}^0_\xi\mbox{ as the answer to }\ \\ 
&\quad\langle r^-_\alpha(\xi),r_\alpha(\xi):\alpha<\delta\rangle\conc
  \langle r^-_\delta(\xi)\rangle \mbox{ in the game }\Game^\lambda_0(
  \name{\bbQ}_\xi, \name{\emptyset}_{\name{\bbQ}_\xi})\mbox{ ''.}  
  \end{array}\]
\end{itemize}
It follows from $(*)_2+(*)_3$ from the previous stages that
$r_\delta^-,r_\delta$ are well defined and $p,r_\alpha\leq r^-_\delta\leq
r_\delta$ for $\alpha<\delta$ (using induction on $\xi\in \dom(r_\delta)$).  
\medskip

This completes the description of the inductive definition of the objects
listed in $(\otimes)_\delta$; it should be clear from the construction that
demands $(*)_0$--$(*)_{11}$ are satisfied. For each $\xi\in w_{\beta+1}
\setminus w_\beta$, $\beta<\lambda$, look at the sequence $\langle
\name{\bar{p}}_{\delta,\xi},\name{\bar{q}}_{\delta,\xi}:\delta<\lambda
\rangle$ and use $(*)_8$ to choose a $\bbP_\xi$--name $q(\xi)$ for a
condition in $\name{\bbQ}_\xi$ such that  
\[\forces_{\bbP_\xi}\mbox{`` }q(\xi)\geq r_\beta(\xi)\mbox{ is aux-generic 
  over }\langle \name{q}^\xi_{\delta,\eta}:\delta<\lambda\ \&\ \eta\in
Y_\delta^\xi \rangle\mbox{ and } D^{\bV^{\bbP_\xi}}\mbox{ ''}\]
(if $\xi=0$ then $q(0)\geq r_0(0)$ is aux-generic over $\langle
q^0_{\delta,\eta}: \delta<\lambda\ \&\ \eta\in Y_\delta^0\rangle$,
$D$). This determines a condition $q\in\bbP_\gamma$ with $\dom(q)=N\cap
\gamma$. It follows from $(*)_2$ that $p\leq r_\beta\leq q$ for all
$\beta<\lambda$.   
\medskip

Let us argue that $q$ is $(N,\bbP_\gamma)$--generic. Let $\name{\tau}\in N$
be a $\bbP_\gamma$--name for an ordinal, say $\name{\tau}=
\name{\tau}_{\alpha^*}$, $\alpha^*<\lambda$, and let us show that $q\forces
\name{\tau}\in N$. So suppose towards contradiction that $q'\geq q$,
$q'\forces \name{\tau}=\zeta$, $\zeta\notin N$. For each $\xi\in N\cap
\gamma$ fix a $\bbP_\xi$--name $\name{\st}^+_\xi$ such that 
\[\begin{array}{ll}
\forces_{\bbP_\xi}&\mbox{`` $\name{\st}^+_\xi$ is a winning strategy of COM 
  in }\\
&\ \ \auxk\big(q(\xi),\langle \name{q}^\xi_{\delta,\eta}:\delta<\lambda\ \&\
\eta\in Y_\delta^\xi\rangle,\name{\bbQ}_\xi,\leq,\bar{\leq}_\pr,
D^{\bV^{\bbP_\xi}} \big)\mbox{ ''.}  
\end{array}\]
Construct inductively a sequence
\[\langle r^+_\alpha,r'_\alpha,\eta_\alpha(\xi),\name{\eta}_\alpha(\xi),
\langle A^i_\alpha(\xi),\name{A}^i_\alpha(\xi): i<\lambda \rangle,
\name{A}_\alpha(\xi):\alpha<\lambda\ \&\ \xi\in N\cap\gamma\rangle\]   
such that the following demands $(*)_{13}$--$(*)_{15}$ are satisfied.
\begin{enumerate}
\item[$(*)_{13}$] $r^+_\alpha,r'_\alpha\in\bbP_\gamma$, $r^+_0=q$, $r'_0\geq
q'$ and $r^+_\beta\leq r'_\beta\leq r^+_\alpha$ for $\beta<\alpha<\lambda$.  
\item[$(*)_{14}$] For each $\xi\in N\cap\gamma$ and $\alpha<\lambda$ we have
  that $\eta_\alpha(\xi)\in {}^\alpha\lambda$, $A^i_\alpha(\xi)\in D$,
  $\name{\eta}_\alpha(\xi)$ is a $\bbP_\xi$--name for a member of
  ${}^\alpha\lambda$, $\name{A}^i_\alpha(\xi)$ is a $\bbP_\xi$--name for a
  member of $D$ and $\name{A}_\alpha(\xi)$ is a $\bbP_\xi$--name for a 
  member of $D^{\bV^{\bbP_\xi}}$, and 
\[\begin{array}{ll}
\forces_{\bbP_\xi}&\mbox{`` }\langle (r_\alpha^+(\xi),
\name{A}_\alpha(\xi), \name{\eta}_\alpha(\xi),r'_\alpha(\xi)): \alpha<
\lambda\rangle\mbox{ is a result of a play of}\\
\ \ &\auxk\big(q(\xi),\langle \name{q}^\xi_{\delta,\eta}:\delta<\lambda\
\&\ \eta\in Y_\delta^\xi\rangle,\name{\bbQ}_\xi,\leq,\bar{\leq}_\pr,
D^{\bV^{\bbP_\xi}}\big)\\  
\ \ &\mbox{in which COM follows the strategy $\name{\st}^+_\xi$ ''.}
\end{array}\]
\item[$(*)_{15}$] For $j,\beta\leq\alpha<\lambda$ and $\xi\in w_\alpha$ we
  have  
\[r'_\alpha\rest \xi\forces\mbox{`` }\name{\eta}_\alpha(\xi)=
\eta_\alpha(\xi)\ \ \&\ \ \mathop{\triangle}\limits_{i<\lambda}
\name{A}^i_\alpha(\xi) \subseteq\name{A}_\alpha(\xi)\ \ \&\ \ 
\name{A}^j_\beta(\xi)= A^j_\beta(\xi)\mbox{ ''.}\]  
\end{enumerate}
(It should be clear how to carry out the construction; remember
$\bbP_\gamma$ is $({<}\lambda)$--strategically complete, so in particular it
does not add new members of ${}^\alpha\lambda$ for $\alpha<\lambda$.) Take a
limit ordinal $\vare>\alpha^*$ such that $\vare\in\bigcap\limits_{\xi\in
w_\vare} \bigcap\limits_{i,j<\vare} A^i_j(\xi)$. Then, by
$(*)_{13}$--$(*)_{15}$, for each $\xi\in w_\vare$ we have     
\[r^+_\vare\rest\xi\forces_{\bbP_\xi}\mbox{`` }\vare\in
\mathop{\triangle}_{\alpha<\lambda}\name{A}_\alpha(\xi)\ \mbox{ and }\
\name{\eta}_\vare(\xi)= \bigcup\limits_{\alpha<\vare}\eta_\alpha(\xi)=
\eta_\vare(\xi)\in Y^\xi_\vare\mbox{ ''}\]    
and consequently, by $(*)_{14}$, 
\begin{enumerate}
\item[$(*)_{16}$] $r^+_\vare\rest\xi\forces_{\bbP_\xi}$``
$\name{q}^\xi_{\vare,\eta_\vare(\xi)}\leq_\pr^\vare r^+_\vare(\xi)$ '' for
each $\xi\in w_\vare$.
\end{enumerate}
Also note that 
\begin{enumerate}
\item[$(*)_{17}$] $p\leq r_\delta\leq q\leq r^+_\vare$ for all
  $\delta<\lambda$.  
\end{enumerate}
Let $t\in T_\vare$ be such that $\rk_\vare(t)=\gamma$ and
$(t)_\xi=\eta_\vare(\xi)$ for $\xi\in w_\vare$. By induction on
$\xi\leq\gamma$, $\xi\in N$, we show that
$q^\vare_{*,t}\rest\xi\leq_{\bbP_\xi} r^+_\vare\rest\xi$. So let us assume
that $\xi<\gamma$ and we have shown that $q^\vare_{*,t}\rest \xi
\leq_{\bbP_\xi} r^+_\vare\rest\xi$. If $\xi\in w_\vare$ then by
$(*)_9+(*)_{16}$ we have $q^\vare_{*,t}\rest(\xi+1)\leq_{\bbP_{\xi+1}}
r^+_\vare\rest (\xi+1)$. So assume $\xi\notin w_\vare$. Now, by $(*)_5$,
$p^\vare_{*,t}\rest\xi\leq r^+_\vare\rest\xi$, so 
\[r^+_\vare\rest \xi\forces_{\bbP_\xi}\mbox{`` }r_\alpha(\xi)\leq 
p^\vare_{*,t}(\xi)\mbox{ for all }\alpha<\vare\mbox{ ''}\]
(remember $(*)_{17}+(*)_6$), and hence 
\[r^+_\vare\rest\xi\forces_{\bbP_\xi}\mbox{`` }r_\alpha(\xi)\leq
q^\vare_{*,t}(\xi)\mbox{ for all }\alpha<\vare\mbox{ ''.}\]
Consequently, it follows from $(*)_{11}$ that 
\[r^+_\vare\rest\xi\forces_{\bbP_\xi}\mbox{`` }q^\vare_{*,t}(\xi)\leq
r^-_\vare(\xi)\leq r_\vare(\xi)\leq r^+_\vare(\xi)\mbox{ ''}\] 
and thus $q^\vare_{*,t}\rest(\xi+1)\leq_{\bbP_{\xi+1}} r^+_\vare\rest
(\xi+1)$. 

Now, since $q^\vare_{*,t}\leq r^+_\vare$ and $(*)_{16}$ holds, we may use
the condition $(*)_{10}$ to conclude that $q^\vare_{*,t}
\forces_{\bbP_\gamma} \name{\tau}=\zeta$ (remember $q'\leq r^+_\vare$,
$\alpha^*<\vare$) and consequently $\zeta\in N$, a contradiction. 
\end{proof}

\begin{remark}
Semi--pure properness is very similar to being reasonably merry of
\cite[Section 6]{RoSh:888}. Despite of some differences in the parameters 
involved, one may suspect that the games are essentially the same if 
$\leq^\delta_\pr = \leq$. This would suggest that semi--pure properness is a 
weaker condition than being reasonably merry. However, the index sets
$Y_\delta$ here are known {\em before\/} the master game starts, while in
\cite{RoSh:888}  the index sets $I_\delta$ are decided at the stage $\delta$
of the game. This makes our present notion somewhat stronger. Note that in
our proof of the Iteration Theorem \ref{thm1.6} we really have to know
$Y_\delta$'s in advance -- we cannot decide names for them and take care of
$(*)_8+(*)_9$ at the same time. (This obstacle was not present in the proof
of \cite[Theorem 6.4]{RoSh:888} as there we did not deal with the auxilary 
relations $\leq^\delta_\pr$.) 

It should be noted that some of the $\lambda$--semi-purely proper forcing
notions discussed in the next section (see Proposition \ref{prop2.4}) are
not reasonably merry as they do not have the bounding property of
\cite[Theorem 6.4(b)]{RoSh:888}. 
\end{remark}

\begin{problem}
Are there any relationships between semi--pure properness and the
  properties introduced in \cite[Definition A.3.6]{RoSh:777},
  \cite[Defnitions 2.2, 6.3]{RoSh:888} ? 
\end{problem}

\section{The Forcings}
In this section we will show that our ``last forcing standing''
$\bbQ^2_\lambda$ and some of its relatives fit the framework of semi--pure
properness (so their $\lambda$--support iterations preserve $\lambda^+$). A
slight modification of $\bbQ^2_\lambda$ was used in iterations in Friedman
and Zdomskyy \cite{FrZd10} and Friedman, Honzik and Zdomskyy
\cite{FrHoZdxx}. It was called ${\rm Miller}(\lambda)$ there and the main
difference between the two forcings is in condition \cite[Definition
2.1(vi)]{FrZd10}. 

The filter $D$ from the previous section will be the club filter, so it is
not mentioned; also until Proposition \ref{stuffFilter} the auxiliary
relations $\leq^\alpha_\pr$ do not depend on $\alpha$, so instead of
$\bar{\leq}_\pr$ we have just $\leq_\pr$ and $f(\alpha)=\lambda$ so we write
$\lambda$ instead of $f$ (see Definition \ref{def1.3}(6)).

For our results we have to assume that $\lambda$ is strongly inaccessible;
the case of successor $\lambda$ remains untreated here (we will deal with it
in a subsequent paper).

\begin{definition}
\label{def2.1}
\begin{enumerate}
\item Let $\bT^{\rm club}$ be the family of all complete $\lambda$--trees $T\subseteq
  {}^{<\lambda}\lambda$ such that 
\begin{itemize}
\item if $t\in T$, then $|\suc_T(t)|=1$ or $\suc_T(t)$ is a club of
  $\lambda$, and  
\item $(\forall t\in T)(\exists s\in T)(t\vtl s\ \&\ |\suc_T(s)|>1)$.   
\end{itemize}
\item We define a forcing notion $\qone$ as follows:\\
{\bf a condition} in $\qone$ is a tree $T\in \bT^{\rm club}$ such that 
\begin{itemize}
\item if $\langle t_i:i<j\rangle\subseteq T$ is $\vtl$--increasing,
  $|\suc_T(t_i)|>1$ for all $i<j$ and $t=\bigcup\limits_{i<j}t_i$, then ($t\in
  T$ and) $|\suc_T(t)|>1$,
\end{itemize}
{\bf the order} $\leq$ of $\qone$ is the inverse inclusion, i.e., $T_1\leq
T_2$ if and only if $T_2\subseteq T_1$.
\item Forcings notions $\qtwo, \qthree,\qfour$ are defined analogously, but\\
{\bf a condition} in $\qtwo$ is a tree $T\in \bT^{\rm club}$ such that for
every $\lambda$--branch $\eta\in\lim_\lambda(T)$ the set $\{\alpha\in
\lambda: |\suc_T(\eta\rest\alpha)|>1\}$ contains a club of $\lambda$,\\ 
{\bf a condition} in $\qthree$ is a tree $T\in \bT^{\rm club}$ such that for
some club $C\subseteq \lambda$ we have 
\[(\forall t\in T)(\lh(t)\in C\ \Rightarrow\ |\suc_T(t)|>1),\]
{\bf a condition} in $\qfour$ is a tree $T\in \bT^{\rm club}$ such that 
\[(\forall t\in T)(\mrot(T)\vtl t\ \Rightarrow\ |\suc_T(t)|>1).\]
\item For $\ell=1,2,3,4$ we define a binary relation $\leq_\pr$ on $\qell$
  by\\ 
$T_1\leq_\pr T_2$ if and only if $T_1\leq T_2$ and $\mrot(T_1)=\mrot(T_2)$.  
\item Let $\bbQ^{1,*}_\lambda$ consists of all conditions $T\in \qtwo $ such 
  that for each $\lambda$--branch $\eta\in\lim_\lambda(T)$ the set
  $\{\alpha\in\lambda: |\suc_T(\eta\rest\alpha)|>1\}$ {\bf is} a club of 
  $\lambda$. 
\item Let $\bbQ^{3,*}_\lambda$ consists of all conditions $T\in \qthree$ such
  that for some club $C\subseteq \lambda$ we have
\begin{itemize}
\item if $t\in T$ and $\lh(t)\in C$, then $|\suc_T(t)|>1$, and 
\item if $t\in T$ and $\lh(t)\notin C$, then $|\suc_T(t)|=1$.
\end{itemize}
\end{enumerate}
\end{definition}

\begin{observation}
\label{obs2.4}
Let $T\in\bT^{\rm club}$. Then $T\in \bbQ^{1,*}_\lambda$ if and only if
there exists a sequence $\langle F_\alpha:\alpha<\lambda\rangle$ of fronts
of $T$ such that
\begin{itemize}
\item if $\alpha<\beta<\lambda$, $t\in F_\beta$, then there is $s\in
  F_\alpha$ such that $s\vtl t$,
\item if $\delta<\lambda$ is limit, $t_\alpha\in F_\alpha$ (for
  $\alpha<\delta$) are such that $t_\alpha\vtl t_\beta$ whenever
  $\alpha<\beta<\delta$, then $\bigcup\limits_{\alpha<\delta} t_\alpha\in
  F_\delta$,
\item for each $t\in T$, $|\suc_T(t)|>1$ if and only if $t\in
  \bigcup\limits_{\alpha<\lambda} F_\alpha$.
\end{itemize}
\end{observation}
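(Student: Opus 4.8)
The plan is to establish Observation \ref{obs2.4} by proving both implications, with the bulk of the work going into constructing the required sequence of fronts from a condition $T \in \bbQ^{1,*}_\lambda$. Recall that $T \in \bbQ^{1,*}_\lambda$ means that for every $\lambda$--branch $\eta \in \lim_\lambda(T)$ the set $S_\eta = \{\alpha < \lambda : |\suc_T(\eta \rest \alpha)| > 1\}$ \emph{is} (contains and is contained in) a club of $\lambda$. The natural candidate for the fronts is to let $F_\alpha$ enumerate the ``$\alpha$-th splitting level'' along each branch: for each $t \in T$ with $|\suc_T(t)| > 1$ (a \emph{splitting node}), assign it a \emph{rank} equal to the order type of the set of splitting nodes properly below it, and then let $F_\alpha = \{t \in T : t \text{ is a splitting node of rank } \alpha\}$. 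The three bullet conditions then become statements about how these ranked splitting nodes sit inside $T$.

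First I would verify that this rank function is well-defined and that each $F_\alpha$ is genuinely a front, meaning every $\lambda$--branch meets it exactly once. This is where membership in $\bbQ^{1,*}_\lambda$ is used: since $S_\eta$ is a club, hence unbounded of order type $\lambda$, every branch has splitting nodes of every rank $\alpha < \lambda$, so $F_\alpha$ meets each branch; and the rank assignment makes the meeting point unique. I would then check the first bullet (downward coherence: a rank-$\beta$ splitting node lies above some rank-$\alpha$ splitting node when $\alpha < \beta$), which is immediate from the definition of rank via order type. The second bullet is the continuity condition at limit stages $\delta$: given a coherent chain $\langle t_\alpha : \alpha < \delta \rangle$ with $t_\alpha \in F_\alpha$, their union $t = \bigcup_{\alpha < \delta} t_\alpha$ must itself be a splitting node of rank exactly $\delta$. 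Here is where the first bullet in the definition of a condition in $\qone$-style trees plays a role, together with completeness of $T$: the union of splitting nodes forming a $\vtl$-chain is again a splitting node, and its rank is the supremum (order type) of the ranks below it, namely $\delta$. The third bullet, that $|\suc_T(t)| > 1$ iff $t \in \bigcup_{\alpha < \lambda} F_\alpha$, is just the statement that every splitting node receives some rank, which follows once one checks the rank is always a well-defined ordinal below $\lambda$.

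For the converse direction, given such a sequence $\langle F_\alpha : \alpha < \lambda \rangle$, I would show $T \in \bbQ^{1,*}_\lambda$ by taking an arbitrary branch $\eta \in \lim_\lambda(T)$ and arguing that $S_\eta$ is a club. By the third bullet, $S_\eta = \{\alpha : \eta \rest \alpha \in \bigcup_\beta F_\beta\}$, so I need to see that the levels at which $\eta$ passes through front-nodes form a club. Unboundedness follows because the first bullet forces the $F_\beta$ to be cofinal in $T$ along any branch (each $F_{\beta+1}$-node lies strictly above an $F_\beta$-node, pushing lengths upward), and closure under limits of length follows from the second bullet, which guarantees the union of the initial segments landing in earlier fronts is again a front-node, hence a splitting node, at the limit length. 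The main obstacle I anticipate is the limit-stage continuity argument in both directions: I must be careful that ``rank equals order type of splitting predecessors'' interacts correctly with the closure condition built into $\bT^{\rm club}$ and the extra splitting-at-limits clause defining these conditions, so that at a limit $\delta$ the union of a coherent chain is forced to be a splitting node of rank precisely $\delta$ rather than merely $\geq \delta$ or a non-splitting node. Pinning down that the rank of such a union is exactly $\delta$ (using that the chain is coherent through \emph{all} the fronts below $\delta$, with no splitting nodes skipped) is the technical heart of the verification.
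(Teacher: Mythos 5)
The paper states Observation \ref{obs2.4} without proof, so there is no official argument to compare against; your proposal supplies a correct one. The rank construction (rank of a splitting node $=$ order type of its splitting proper initial segments, $F_\alpha=$ the rank-$\alpha$ splitting nodes) is the natural choice and all the verifications you outline go through: same-rank splitting nodes are $\vtl$--incomparable, every branch meets $F_\alpha$ exactly once because the club of splitting levels along that branch has order type $\lambda$ and its $\alpha$-th element is precisely the rank-$\alpha$ node, and in the converse direction the map sending $\beta$ to the level at which a fixed branch meets $F_\beta$ is a normal function whose range is exactly the set of splitting levels, which is therefore a club. One small correction: at the limit-stage step of the forward direction you justify ``a union of a $\vtl$--chain of splitting nodes is again splitting'' by appealing to the defining clause of $\bbQ^2_\lambda$, but the hypothesis is only $T\in\bbQ^{1,*}_\lambda$, and using the identification $\bbQ^2_\lambda=\bbQ^{1,*}_\lambda$ (Observation \ref{prop2.3}) here would risk circularity, since that identification is most naturally derived from the present characterization. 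The fix is immediate: extend the union $t$ of the chain to any $\lambda$--branch $\eta$; the lengths of the $t_\alpha$ lie in the club $S_\eta=\{\gamma:|\suc_T(\eta\rest\gamma)|>1\}$ and are cofinal in $\lh(t)<\lambda$, so closedness of $S_\eta$ gives $\lh(t)\in S_\eta$, i.e.\ $t$ splits; the computation that its rank is exactly $\delta$ is then as you describe. With that substitution the argument is complete.
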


\begin{observation}
\label{prop2.3}  
$\qfour\subseteq\bbQ^{3,*}_\lambda\subseteq\bbQ^2_\lambda=\bbQ^{1,*}_\lambda
\subseteq \qtwo$ and $\bbQ^{3,*}_\lambda\subseteq \qthree\subseteq \qtwo$,
and $\bbQ^{3,*}_\lambda$ is a dense subforcing of $\qthree$.  
\end{observation}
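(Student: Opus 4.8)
The plan is to verify the chain of inclusions and the density claim by unwinding the definitions in Definition~\ref{def2.1}, working coordinate-by-coordinate on the splitting structure of a tree $T\in\bT^{\rm club}$. For each $T$ write $S(T)=\{t\in T:|\suc_T(t)|>1\}$ for its set of \emph{splitting nodes}; every membership condition among $\qone,\ldots,\qfour,\bbQ^{1,*}_\lambda,\bbQ^{3,*}_\lambda$ is a statement purely about $S(T)$ (relative to the ambient requirement $T\in\bT^{\rm club}$). The inclusions then reduce to implications between first-order conditions on $S(T)$, and I would treat them in the order $\qfour\subseteq\bbQ^{3,*}_\lambda$, then $\bbQ^{3,*}_\lambda\subseteq\bbQ^2_\lambda$ together with $\bbQ^2_\lambda=\bbQ^{1,*}_\lambda$, then $\bbQ^{1,*}_\lambda\subseteq\qtwo$ and the parallel branch $\bbQ^{3,*}_\lambda\subseteq\qthree\subseteq\qtwo$, and finally the density statement.

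First I would handle the easy individual inclusions. For $\qfour\subseteq\bbQ^{3,*}_\lambda$: if every node properly above $\mrot(T)$ splits, take $C$ to be the set of lengths $\geq\lh(\mrot(T))$ (a final segment of $\lambda$, hence a club), and observe $t\in T$ splits iff $\mrot(T)\vtl t$, which since $T\in\bT^{\rm club}$ forces $\lh(t)\in C$; conversely a nonsplitting $t$ must have $\lh(t)\le\lh(\mrot(T))$, so $\lh(t)\notin C$ after trimming $C$ to strictly-larger lengths, giving the two-sided clause of $\bbQ^{3,*}_\lambda$. For $\bbQ^{3,*}_\lambda\subseteq\qthree$ and $\qthree\subseteq\qtwo$ and $\bbQ^{1,*}_\lambda\subseteq\qtwo$ the implications are immediate from the definitions (a witnessing club for the two-sided splitting clause witnesses the one-sided clause; a club of splitting lengths along every branch yields a club of splitting nodes along every branch). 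The inclusion $\bbQ^{3,*}_\lambda\subseteq\bbQ^{1,*}_\lambda$ follows because the witnessing club $C$ makes, for each branch $\eta$, the set $\{\alpha:|\suc_T(\eta\rest\alpha)|>1\}$ exactly equal to $C\cap\lim_\lambda$-relevant lengths, hence a club.

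The conceptual heart is the identity $\bbQ^2_\lambda=\bbQ^{1,*}_\lambda$. Here the nontrivial direction is $\bbQ^2_\lambda\subseteq\bbQ^{1,*}_\lambda$, i.e.\ upgrading ``$\{\alpha:|\suc_T(\eta\rest\alpha)|>1\}$ \emph{contains} a club'' to ``\emph{is} a club'' for every branch. The point is the extra closure axiom built into the definition of $\qone$ (and hence of its relatives sharing the $\bT^{\rm club}$ base): a $\vtl$-increasing union of splitting nodes is again splitting. This makes $S(T)$ closed under increasing unions of length $<\lambda$, so along any branch $\eta$ the set of splitting lengths is \emph{closed}; combined with the hypothesis that it contains a club (hence is unbounded in $\lambda$), closedness plus unboundedness gives that it \emph{is} a club. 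I would phrase this as: if $L_\eta=\{\alpha:\eta\rest\alpha\in S(T)\}$ contains a club, it is unbounded, and the union-closure axiom shows $L_\eta$ is closed, so $L_\eta$ is club. The reverse inclusion $\bbQ^{1,*}_\lambda\subseteq\bbQ^2_\lambda$ is trivial since ``is a club'' implies ``contains a club.'' This closure argument is the main obstacle, since it is the only step using the defining closure property of $\qone$ rather than a bare unwinding of the splitting conditions, and one must be careful that the union of splitting nodes along $\eta$ really lands in $S(T)$ at every limit length.

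Finally, for the density of $\bbQ^{3,*}_\lambda$ in $\qthree$, given $T\in\qthree$ with witnessing club $C$ (so every $t\in T$ with $\lh(t)\in C$ splits, but nodes of length outside $C$ may still split), I would build a stronger $T'\subseteq T$, $T'\in\bbQ^{3,*}_\lambda$, by pruning: working up the tree, at each node keep exactly one successor whenever $\lh(t)\notin C$ and retain all club-many successors whenever $\lh(t)\in C$, choosing the single successors continuously (e.g.\ $<^*_\chi$-first) so that limits of kept nodes remain in $T$ and the complete-$\lambda$-tree requirements of $\bT^{\rm club}$ are preserved. The resulting $T'$ has splitting exactly on $C$-lengths, so $T'\in\bbQ^{3,*}_\lambda$ with the same witness $C$, and $T\le T'$; since $T'\subseteq T$ was produced below an arbitrary condition, density follows. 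The only care needed is maintaining $T'\in\bT^{\rm club}$ through limit stages of length $<\lambda$, which follows from the completeness of $T$ and the continuity of the pruning choices.
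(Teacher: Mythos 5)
The paper records this statement as an Observation with no proof, so there is nothing to compare against; judging your argument on its own terms, most of it is fine: the reduction to conditions on the splitting set, the inclusions $\qfour\subseteq\bbQ^{3,*}_\lambda$, $\bbQ^{3,*}_\lambda\subseteq\bbQ^{1,*}_\lambda$, $\bbQ^{3,*}_\lambda\subseteq\qthree\subseteq\qtwo$, $\bbQ^{1,*}_\lambda\subseteq\qtwo$, and the pruning argument for density are all correct. The problem is in the step you yourself single out as the conceptual heart, the equality $\bbQ^2_\lambda=\bbQ^{1,*}_\lambda$, where you appear to have swapped the defining clauses of $\bbQ^2_\lambda$ and $\bbQ^1_\lambda$ (an easy slip, given that the macro $\qone$ denotes $\bbQ^2_\lambda$ and $\qtwo$ denotes $\bbQ^1_\lambda$). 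The forcing $\bbQ^2_\lambda$ is defined \emph{only} by the closure axiom that increasing unions of splitting nodes split; it carries no hypothesis that $L_\eta=\{\alpha:|\suc_T(\eta\rest\alpha)|>1\}$ contains a club --- that is the definition of $\bbQ^1_\lambda$. So when you prove $\bbQ^2_\lambda\subseteq\bbQ^{1,*}_\lambda$ by saying that $L_\eta$ is closed (correct, by the closure axiom) \emph{combined with the hypothesis that it contains a club}, you are invoking a hypothesis that is not available. The unboundedness of $L_\eta$ must instead be extracted from the second clause of the definition of $\bT^{\rm club}$: if $\eta\rest\alpha$ failed to split for all $\alpha\geq\alpha_0$, then by induction on length every node of $T$ extending $\eta\rest\alpha_0$ would be an initial segment of $\eta$, so there would be no splitting node above $\eta\rest\alpha_0$, contradicting $(\forall t\in T)(\exists s\in T)(t\vtl s\ \&\ |\suc_T(s)|>1)$. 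With that observation your closed-plus-unbounded argument goes through.

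The reverse inclusion $\bbQ^{1,*}_\lambda\subseteq\bbQ^2_\lambda$ is likewise not disposed of by noting that being a club implies containing a club: that implication only yields $\bbQ^{1,*}_\lambda\subseteq\bbQ^1_\lambda$, which holds by definition anyway and is not what is needed here. What you must verify is the closure axiom for a tree all of whose branchwise splitting-level sets are clubs: given a $\vtl$-increasing sequence $\langle t_i:i<j\rangle$ of splitting nodes with union $t$, completeness of $T$ puts $t$ in $T$; extend $t$ to some $\eta\in\lim_\lambda(T)$ (possible since $T$ is a complete $\lambda$--tree), note that each $\lh(t_i)$ lies in $L_\eta$, and use the closedness of the club $L_\eta$ to conclude $\lh(t)=\sup_i\lh(t_i)\in L_\eta$, i.e.\ that $t$ splits. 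Both repairs are short, but as written the central equality --- and with it the link $\bbQ^{3,*}_\lambda\subseteq\bbQ^2_\lambda$ in your chain --- is not established.
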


\begin{observation}
\label{prop2.2}
Let $\ell\in\{1,2,3,4\}$.
\begin{enumerate}
\item $(\qell,\leq,\leq_\pr)$ is a forcing notion with $\lambda$--complete
  semi-purity.   
\item Moreover, the relations $(\qell,\leq)$ and $(\qell,\leq_\pr)$ are
  $(<\lambda)$--complete.   
\end{enumerate}
\end{observation}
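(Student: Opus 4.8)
The plan is to verify the two parts of Observation \ref{prop2.2} directly from the definitions, treating all four forcings $\qell$ uniformly and noting that the key structural facts follow from the closure properties built into $\bT^{\rm club}$ and the definition of each $\qell$.

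For part (1), I must check that $(\qell,\leq,\leq_\pr)$ satisfies conditions (a) and (b) of Definition \ref{def1.1} with $f(\alpha)=\lambda$ for all $\alpha$ (so I need strategic $({<}\lambda)$--completeness of $\leq$ and strategic $({\leq}\kappa)$--completeness of $\leq_\pr$ for all infinite $\kappa<\lambda$). Condition (a), namely ${\leq_\pr}\subseteq{\leq}$, is immediate from Definition \ref{def2.1}(4), since $T_1\leq_\pr T_2$ is defined to include $T_1\leq T_2$. For condition (b), the heart of the matter is completeness, which is really part (2); so I would prove (2) first and then observe that $(<\lambda)$--completeness (in the sense that every $\leq$-increasing, resp.\ $\leq_\pr$-increasing, chain of length $<\lambda$ has an upper bound) immediately yields the required strategic completeness, since Complete can simply play any upper bound at each stage and a chain of length $\kappa<\lambda$ (or $\kappa+1\leq\lambda$) always has a bound.

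For part (2), the substantive content, I would show that a $\leq$-increasing sequence $\langle T_i:i<j\rangle$ with $j<\lambda$ has an upper bound, and similarly for $\leq_\pr$. Since $\leq$ is inverse inclusion, the natural candidate upper bound is $T=\bigcap_{i<j} T_i$. I would verify that $T$ is a complete $\lambda$--tree: it is clearly a $\vtl$-downward closed set of sequences, and a $\vtl$-chain of length $<\lambda$ in $T$ lies in each $T_i$ and hence has a bound in each $T_i$, and because $\lambda$ is regular and $j<\lambda$, one checks these bounds coincide and lie in $T$. I would then verify that $T$ lands in the relevant $\qell$ by checking the defining club/successor conditions; here I would use that the intersection of fewer than $\lambda$ clubs is a club (as $\lambda$ is regular uncountable), which handles the successor-set and $\lambda$-branch requirements in each of the four cases. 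For $\leq_\pr$, I additionally note that since $\mrot(T_i)$ is constant in $i$ (by definition of $\leq_\pr$), the common root survives in the intersection, so $\mrot(T)=\mrot(T_0)$ and the bound is a $\leq_\pr$-bound.

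The main obstacle I anticipate is verifying that the intersection $T=\bigcap_{i<j}T_i$ actually satisfies the defining property of each $\qell$ (not just membership in $\bT^{\rm club}$), and in particular that $T$ still has enough splitting to remain a legitimate condition rather than degenerating. The delicate case is $\qtwo$ (and its starred version), where the splitting condition is stated along every $\lambda$-branch: I must ensure that intersecting $<\lambda$ conditions does not destroy the club of splitting levels along the surviving branches, which again reduces to closure of the club filter under ${<}\lambda$ intersections. I would also need to confirm that $T$ is genuinely a complete $\lambda$--tree, i.e.\ that every short chain has a bound and every node extends to a splitting node; the former uses regularity of $\lambda$ together with $j<\lambda$, while the latter follows from the ``infinitely splitting'' clause surviving the intersection. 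Once these structural checks are in place, the strategic completeness assertions of part (1)(b) follow formally, so the entire observation reduces to the single computation that a ${<}\lambda$-intersection of conditions is again a condition.
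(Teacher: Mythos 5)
Your proposal is correct, and it is the intended argument: the paper states this as an Observation with no written proof, and the content is exactly what you identify, namely that the intersection of a $\leq$-increasing (resp.\ $\leq_\pr$-increasing) chain of length $<\lambda$ is again a condition with the same root in the $\leq_\pr$ case, after which strategic completeness is formal; this is the easy $<\lambda$-length special case of the fusion argument the paper does write out in Lemma \ref{pre2.5}. The only step deserving an explicit word is the second clause of membership in $\bT^{\rm club}$ (every node of $\bigcap_{i<j}T_i$ has a splitting node above it): the clean route is to note that $\suc_{\bigcap_i T_i}(t)=\bigcap_i\suc_{T_i}(t)$ is a singleton or an intersection of fewer than $\lambda$ clubs, so every node lies on a $\lambda$-branch of the intersection, and then to use Observation \ref{prop2.3} (each $\qell$ is contained in $\bbQ^1_\lambda$) to get club-many splitting levels along that branch in each $T_i$ and hence in the intersection.
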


\begin{lemma}
\label{pre2.5}
Let $0<\ell\leq 4$. Assume that $T^\delta\in\qell$ and $F_\delta\subseteq
T^\delta$ (for $\delta<\lambda$) are such that 
\begin{enumerate}
\item[(i)] $F_\delta$ is a front of $T^\delta$, $T^{\delta+1}\subseteq
  T^\delta$, and $F_\delta\subseteq T^{\delta+1}$, 
\item[(ii)] if $\delta$ is limit, then $T^\delta=\bigcap\limits_{i<\delta}
  T^i$ and $F_\delta=\big\{t\in T^\delta: (\forall \xi<\delta)(\exists
  i<\lh(t))(t\rest i\in F_\xi)$ and $(\forall i<\lh(t))(\exists
  \xi<\delta)(\exists j<\lh(\nu))(i<j\ \&\ \nu\rest j\in F_\xi)\big\}$,  
\item[(iii)] $(\forall t\in F_{\delta+1})(\exists s\in F_\delta)(s\vtl t)$,  
\item[(iv)] if $t\in F_\delta$ and $|\suc_{T^\delta}(t)|>1$, then
  $|\suc_{T^{\delta+1}}(t)|>1$.  
\end{enumerate}
Then $S\stackrel{\rm def}{=}\bigcap\limits_{\delta<\lambda}
T^\delta\in\qell$. 
\end{lemma}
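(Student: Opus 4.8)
The plan is to reduce everything to one \emph{stabilization} phenomenon: for each $t\in S$ the successor sets $\suc_{T^\delta}(t)$ are eventually constant in $\delta$, so that $\suc_S(t)=\bigcap_{\delta<\lambda}\suc_{T^\delta}(t)$ is exactly this eventual value -- a singleton or a club of $\lambda$. Two observations drive this. First, \emph{pruning below a front is trivial}: if $u\in T^\delta$ lies strictly below $F_\delta$ (i.e. $u\vtl s$ for some $s\in F_\delta$), then every immediate successor of $u$ in $T^\delta$ is an initial segment of a node of $F_\delta$ (using that $F_\delta$, being a front, is met by every branch above $u$), hence lies in $T^{\delta+1}$ by (i) and downward closure; thus $\suc_{T^{\delta+1}}(u)=\suc_{T^\delta}(u)$. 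Second, \emph{the fronts surpass every fixed node}: from (iii) and the antichain property one checks that if $t$ is at or strictly below $F_\delta$ then $t$ is strictly below $F_{\delta+1}$ (otherwise (iii) yields two comparable nodes of $F_\delta$), while the $F_\delta$-nodes sitting below a fixed $t$ form a $\vtl$-increasing sequence of strictly increasing lengths all $\le\lh(t)$; as no such sequence has length $\lambda$, there is a least $\delta_t\le\lh(t)$ with $t$ strictly below $F_\delta$ for all $\delta\ge\delta_t$. Combining the two, $\suc_{T^\delta}(t)$ is constant on $[\delta_t,\lambda)$ and equals $\suc_S(t)$. A useful by-product: every $s\in F_\delta$ is itself in $S$ with $\suc_S(s)=\suc_{T^\delta}(s)$, so by (iv) a front node branching in $T^\delta$ branches in $S$.

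Next I would verify $S\in\bT^{\rm club}$. That $S$ is a $\vtl$-downward closed tree is immediate, and completeness follows because the union of any $\vtl$-chain of length ${<}\lambda$ from $S$ lies in each $T^\delta$ (by completeness of $T^\delta$), hence in $S$; the singleton-or-club dichotomy for $\suc_S$ is the stabilization above. For the requirement that every node extends, and that branching nodes are dense, I would exploit the front tree: given $s\in F_\delta$, (iii) together with the antichain property forces $s$ strictly below $F_{\delta+1}$, so there is $s'\in F_{\delta+1}$ with $s\vtl s'$, and at limit stages the relevant unions land in $F_\delta$ by (ii) and in $T^\delta$ by completeness of $\bigcap_{\xi<\delta}T^\xi$. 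Threading these choices above any given node produces a continuous $\vtl$-increasing chain of front nodes whose union is a $\lambda$-branch of $S$. For dense branching, fix $t\in S$ and such a branch $\eta\in\lim_\lambda(S)$ through a front node above $t$; each $T^\delta\in\qell$ branches cofinally along $\eta$, so choosing a branching node $u\vtl\eta$ of $T^\delta$ shorter than the $F_\delta$-node on $\eta$ puts $u$ strictly below $F_\delta$, whence $\suc_S(u)=\suc_{T^\delta}(u)$ is a club and $u$ branches in $S$.

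It remains to transfer the defining clause of each $\qell$. For $\qone$, if $\langle t_i\rangle$ are branching in $S$ with union $t$ of length $\alpha$, then all the relevant stabilization indices are $\le\alpha$, so for any $\delta\ge\alpha$ every $t_i$ branches in $T^\delta$; the closure property of $\qone$ gives branching of $t$ in $T^\delta$, i.e. in $S$. The clauses for $\qthree$ and $\qfour$ are node-wise versions: for $\qthree$ take the diagonal intersection $\triangle_{\delta<\lambda}C_\delta$ of the tree-clubs $C_\delta$; a node of $S$ whose length $\alpha$ lies there branches in every $T^\delta$ ($\delta<\alpha$), hence in $T^\alpha=\bigcap_{\delta<\alpha}T^\delta$ (an intersection of ${<}\lambda$ clubs, using regularity of $\lambda$), and stabilization together with (iv) pushes this into $S$. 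For $\qfour$, $\mrot(S)$ exists by dense branching and branches in $S$, so $\mrot(T^\delta)\trianglelefteq\mrot(S)$ for large $\delta$; any $t$ with $\mrot(S)\vtl t$ then satisfies $\mrot(T^\delta)\vtl t$, so $t$ branches in $T^\delta$, hence in $S$.

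The main obstacle is the $\qtwo$ clause, since here the naive $\suc_S(t)=\bigcap_{\delta<\lambda}\suc_{T^\delta}(t)$ is an intersection of $\lambda$-many clubs and need not be a club, and one must show that along \emph{every} branch $\eta\in\lim_\lambda(S)$ the branching levels contain a club. I would let $f(\delta)$ be the length of the $F_\delta$-node on $\eta$; by (iii) and (ii), $f$ is strictly increasing and continuous, so its set $E$ of limit fixed points is a club on which $\eta\rest\alpha$ is itself a front node. Writing $C_\delta$ for the club of branching levels of $\eta$ in $T^\delta$, for $\alpha\in E\cap\triangle_{\delta<\lambda}C_\delta$ the node $\eta\rest\alpha$ branches in every $T^\delta$ with $\delta<\alpha$, hence in $T^\alpha=\bigcap_{\delta<\alpha}T^\delta$ (again an intersection of only ${<}\lambda$ clubs); since $\eta\rest\alpha\in F_\alpha$, (iv) and stabilization yield branching in $S$. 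Thus the branching levels of $\eta$ contain the club $E\cap\triangle_{\delta<\lambda}C_\delta$. The whole point of the front machinery is exactly this: it converts the problematic $\lambda$-fold intersection into a diagonal intersection evaluated at limit stages $\alpha<\lambda$, where only ${<}\lambda$-many clubs are intersected.
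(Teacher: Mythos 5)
Your proposal follows essentially the same route as the paper's proof: your ``stabilization'' lemma is the paper's observation that each $F_\alpha$ is a front of $S$ whose lower cone in $S$ coincides with its lower cone in every $T^\beta$, $\beta\geq\alpha$; and your treatment of the four clauses --- in particular converting the $\lambda$--fold intersection into a diagonal intersection evaluated at limit fixed points of the front function along a branch --- is exactly the paper's argument. Two local points, however, need repair. First, for $s\in F_\delta$ the correct identity is $\suc_S(s)=\suc_{T^{\delta+1}}(s)$, not $\suc_{T^\delta}(s)$: a node of $F_\delta$ may well lose successors when passing to $T^{\delta+1}$, and clause (iv) only guarantees that it does not lose all but one. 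Your subsequent appeal to (iv) shows you mean the former, so this is only a misstatement, but as written it would make (iv) superfluous.

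Second, and more substantively, the sentence establishing density of splitting nodes does not work as stated. For a fixed $\delta$ the branching nodes of $T^\delta$ along $\eta$ are indeed cofinal in $\lambda$, but there is no reason any of them should fall in the \emph{bounded} interval between $t$ and the $F_\delta$--node of $\eta$: the first branching of $T^\delta$ along $\eta$ above $t$ could occur only at or beyond that front node, and increasing $\delta$ both raises the front and shrinks the tree, so no choice of $\delta$ is guaranteed to work. This gap is harmless only because your final paragraph already contains the correct argument, which applies to every $\ell$ since $\qell\subseteq\bbQ^1_\lambda$: at a limit $\alpha>\lh(t)$ which is a fixed point of $f$ and lies in $\mathop{\triangle}_{\delta<\lambda}C_\delta$, the node $\eta\rest\alpha$ belongs to $F_\alpha$ and splits in $T^\alpha=\bigcap_{\delta<\alpha}T^\delta$, hence by (iv) and stabilization it splits in $S$. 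You should derive dense branching this way (it is precisely how the paper does it) rather than from the flawed ``branching node strictly below the front'' step.
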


\begin{proof}
Plainly, $S$ is a tree closed under unions of $\vtl$--chains shorter than
$\lambda$, and by (i)--(iii) we see that for each $t\in S$ there is $s\in S$
such that $t\vtl s$. Hence $S$ is a complete $\lambda$--tree. 

Also, for each $\alpha<\lambda$ we have 
\begin{enumerate}
\item[(v)] $F_\alpha$ is a front of $S$ and for all $\beta\geq \alpha$
\[\{t\in S:(\exists s\in
F_\alpha)(t\trianglelefteq s)\}=\{t\in T_\beta:(\exists s\in
F_\alpha)(t\trianglelefteq s)\}.\] 
\end{enumerate}
Hence every splitting node in $S$ splits into a club. Suppose now that 
$s\in S$ and let $\eta\in\lim_\lambda(S)$ be such that $s\vtl\eta$. Since
$T^i\in\qtwo$ (remember \ref{prop2.3}), the set $\{\alpha<\lambda:
|\suc_{T^i}(\eta\rest\alpha)|>1\}$ contains a club (for each
$i<\lambda$). Also the set $\{\alpha<\lambda: \eta\rest\alpha\in F_\alpha\}$
is a club (remember (iii)+(ii)). So we may pick a limit ordinal
$\delta<\lambda$ such that $\lh(s)<\delta$, $\eta\rest \delta\in F_\delta$
and $|\suc_{T^i}(\eta\rest\delta)|>1$ for all $i<\delta$. Then (by (ii))
also $|\suc_{T^\delta}(\eta\rest\delta)|>1$ and hence (by (iv)+(iii)+(v)) 
$|\suc_S(\eta\rest\delta)|>1$ (and $s\vtl\eta\rest\delta$). So we may
conclude that $S\in\bT^{\rm club}$. We will argue that $S\in\qell$ 
considering the four possible values of $\ell$ separately.

\noindent {\sc Case}\quad $\ell=1$\\
Suppose $\eta\in\lim_\lambda(S)$. Then for each $\delta<\lambda$ the set
$\{\alpha<\lambda: |\suc_{T^\delta}(\eta\rest\alpha)|>1\}$ contains a club
and thus the set 
\[A\stackrel{\rm def}{=}\big\{\alpha<\lambda:\alpha\mbox{ is limit and }
(\forall\delta<\alpha)(|\suc_{T^\delta}(\eta\rest\alpha)|>1)\mbox{ and }
\eta\rest\alpha\in F_\alpha\big\}\] 
contains a club. But if $\alpha\in A$, then also $|\suc_{T^\alpha}(\eta\rest
\alpha)|>1$ and hence $|\suc_S(\eta\rest\alpha)|>1$ (remember (ii)+(iv)). 

\noindent {\sc Case}\quad $\ell=2$\\
Suppose that a sequence $\langle s_i:i<j\rangle\subseteq S$ is
$\vtl$--increasing and $|\suc_S(s_i)|>1$ for all $i<j$. Let
$s=\bigcup\limits_{i<j} s_i$ and $\delta=\lh(s)$. Then also
$|\suc_{T^\delta}(s_i)|>1$ (for all $i<j$) and hence
$|\suc_{T^\delta}(s)|>1$. By (v)+(iv)+(iii)+(i) we easily conclude 
$|\suc_S(s)|>1$ (note that $s\trianglelefteq t$ for some $t\in
F_\delta$).  

\noindent {\sc Case}\quad $\ell=3$\\
Let $C_\delta\subseteq\lambda$ be a club such that
\[\alpha\in C_\delta\ \&\ t\in T^\delta\cap {}^\alpha\lambda\quad
\Rightarrow\quad |\suc_{T^\delta}(t)|>1.\]
Set $C=\mathop{\triangle}\limits_{\delta<\lambda} C_\delta$. Then for each
limit $\alpha\in C$ and $t\in S\cap {}^\alpha\lambda$ we have that
$|\suc_{T^\delta}(t)|>1$ for all $\delta<\alpha$, and hence also
$|\suc_{T^\alpha}(t)|>1$ (by (ii)). Invoking (v)+(iv) we see that
$|\suc_S(t)|>1$ whenever $t\in S$, $\lh(t)\in C$ is limit. 

\noindent {\sc Case}\quad $\ell=4$\\
If $\mrot(S)\vtl s\in S$, then $|\suc_{T^\delta}(s)|>1$ for all
$\delta<\lambda$ and hence $|\suc_S(s)|>1$ (remember (v)). 
\end{proof}

\begin{proposition}
\label{prop2.4}
Let $\lambda$ be a strongly inaccessible cardinal, $Y_\delta=
{}^\delta\delta$ for $\delta<\lambda$ and $\bar{Y}=\langle Y_\delta: \delta
<\lambda\rangle$. Then the forcing notions $(\qell,\leq,\leq_\pr)$ for
$\ell\in\{2,3,4\}$ are $\lambda$-semi--purely proper over $\bar{Y}$. 
\end{proposition}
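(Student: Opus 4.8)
By Observation \ref{prop2.2} the triple $(\qell,\le,\le_\pr)$ is a forcing notion with $\lambda$--complete semi--purity, so (recalling Definition \ref{def1.3}(5) and that $D$ is the club filter) it remains only to produce, for each $p\in\qell$, a winning strategy for Generic in the main game $\Game^{\rm main}_{\bar Y}(p,\qell,{\le},{\le}_\pr)$. Since $\lambda$ is strongly inaccessible we have $|{}^\delta\delta|\le 2^{|\delta|}<\lambda$, so $\bar Y$ is indeed an indexing sequence. Fix $p=T^*\in\qell$. The plan is to let Generic's strategy build, along the play, a fusion sequence of conditions $T^*=T_0\le T_1\le\cdots$ (shrinking trees) together with fronts $F_\delta\subseteq T_\delta$ and a coherent labeling of the splitting nodes, arranged so that (a) the hypotheses (i)--(iv) of Lemma \ref{pre2.5} hold, and (b) the fusion $p^*:=\bigcap_{\delta<\lambda}T_\delta$ is aux--generic over the system $\bar q$ of Antigeneric's answers.

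First I would fix, as the tree is built, a labeling assigning to each splitting successor an ordinal $<\lambda$ (e.g.\ its position in the increasing enumeration of the relevant club of successors), extended coherently to a label $\mathrm{lab}(t)\in{}^{<\lambda}\lambda$ for each splitting node $t$, recording the successor taken at each earlier splitting level. At stage $\delta$ Generic exposes the front $F_\delta$ of those splitting nodes whose label lies in ${}^\delta\delta$, and presents the system $\langle (T_\delta)_t:\mathrm{lab}(t)\in{}^\delta\delta\rangle$, reindexed by $\eta\in Y_\delta={}^\delta\delta$ via $\eta\mapsto t$ with $\mathrm{lab}(t)=\eta$. These conditions are pairwise incompatible (distinct front nodes) and each has root $\mrot((T_\delta)_t)=t$. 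Antigeneric answers with $q_{\delta,\eta}\le_\pr (T_\delta)_t$; as $\le_\pr$ preserves roots, $q_{\delta,\eta}$ is a subtree through $t$ with the same stem. Generic then forms $T_{\delta+1}$ by gluing these refinements above the nodes of $F_\delta$ while keeping $T_\delta$ unchanged off $F_\delta$ and re--splitting so as to keep club--splitting; at limits she takes intersections and the fronts prescribed by Lemma \ref{pre2.5}(ii). The bookkeeping is arranged so that (i)--(iv) of Lemma \ref{pre2.5} hold, whence $p^*=\bigcap_\delta T_\delta\in\qell$ and $p^*\ge p$, and moreover $(p^*)_t\subseteq (T_{\delta+1})_t\subseteq q_{\delta,\eta}$ with $\mrot((p^*)_t)=t$, so that $q_{\delta,\eta}\le_\pr (p^*)_t$ for every $t\in F_\delta$.

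It then remains to verify $(\boxdot)$, i.e.\ that $p^*$ is aux--generic over $\bar q$; this is the heart of the matter, and I would give a winning strategy for COM in $\Game^{\rm aux}_{\bar Y}(p^*,\bar q,\qell,{\le},{\le}_\pr)$. Reacting to INC's moves $r'_\alpha\ge r_\alpha$, COM builds a single $\lambda$--branch $\eta\in\lim_\lambda(p^*)$ threading the shrinking trees $\bigcap_{\beta<\alpha}r'_\beta$ and passing, at each split, through a surviving labeled successor; the labels she uses are the values of $\eta$, and her choices $\eta_\alpha=\mathrm{lab}(\eta\rest\alpha)$ grow coherently. Crucially, the set of closure points $\{\gamma:(\forall\xi<\gamma)(\eta(\xi)<\gamma)\}$ is a club, and COM simply includes it in every $A_\alpha$; this guarantees $\eta_\gamma\in{}^\gamma\gamma$ for every diagonal--limit $\gamma\in\mathop{\triangle}_{\alpha<\lambda}A_\alpha$, so the first clause of $(\odot)$ is automatic. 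For such $\gamma$ the level--$\gamma$ node $t=\eta\rest\gamma$ lies in $F_\gamma$ with $\mathrm{lab}(t)=\eta_\gamma$, and COM plays $r_\gamma$ equal to the subtree of $\bigcap_{\alpha<\gamma}r'_\alpha$ through $t$; then $r_\gamma\subseteq(p^*)_t\subseteq q_{\gamma,\eta_\gamma}$ and $\mrot(r_\gamma)=t$, whence $q_{\gamma,\eta_\gamma}\le_\pr r_\gamma$, which is the second clause of $(\odot)$. At successor and limit stages COM uses the $(<\lambda)$--completeness of $(\qell,\le)$ from Observation \ref{prop2.2}(2) to keep extending the branch and to take bounds.

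The main obstacle is the coherent labeling: it must simultaneously present pairwise incompatible ${}^\delta\delta$--indexed conditions at every stage, be preserved along the whole fusion so that labels of limit nodes are the unions of the labels below them, and agree with the ``branch address'' used by COM, so that every label--sequence COM needs is realizable by following surviving labeled successors inside INC's thinned trees. Checking that Generic's gluing respects this labeling and the hypotheses of Lemma \ref{pre2.5} is where the real work lies; once it is in place, the three cases $\ell\in\{2,3,4\}$ are handled uniformly, the differences between the forcings being absorbed entirely into the conclusion $S\in\qell$ of Lemma \ref{pre2.5}.
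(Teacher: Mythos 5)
Your overall architecture is the same as the paper's: Generic runs a fusion $T^0\supseteq T^1\supseteq\cdots$, at stage $\delta$ offers restrictions of $T^\delta$ above a level-$\delta$ antichain indexed by $Y_\delta$, glues Antigeneric's $\leq_\pr$-extensions back in, gets $p^*=\bigcap_\delta T^\delta\in\qell$ from Lemma \ref{pre2.5}, and in the auxiliary game threads a branch whose diagonal-limit levels land on splitting nodes indexed by $Y_\gamma$. However, the labeling device --- which you yourself flag as ``where the real work lies'' and do not carry out --- is both defective and unnecessary. Defective: the set $F_\delta=\{t:\mathrm{lab}(t)\in{}^\delta\delta\}$ is \emph{not} a front of $T_\delta$ (a branch that takes a successor of label $\geq\delta$ at some earlier splitting level never meets it), so hypothesis (i) of Lemma \ref{pre2.5} fails as you invoke it; moreover $\mathrm{lab}(\eta\rest\alpha)$ has length equal to the splitting depth of $\eta\rest\alpha$, not $\alpha$, so it is not a legal move $\eta_\alpha\in{}^\alpha\lambda$ in the auxiliary game, and the alignment between ``splitting depth $\gamma$'' and ``length $\gamma$'' needed at diagonal limits is never established. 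Unnecessary: since $Y_\delta={}^\delta\delta$ literally consists of potential nodes, the paper indexes by the nodes themselves --- Generic sets $A_\delta=T^\delta\cap Y_\delta$, plays $(T^\delta)_\eta$ for $\eta\in A_\delta$ and arbitrary pairwise incompatible conditions for the unused indices in $Y_\delta\setminus A_\delta$ (the game permits this), and feeds Lemma \ref{pre2.5} the full level $F_\delta=T^\delta\cap{}^\delta\lambda$, which genuinely is a front, while refining only above $A_\delta$. This eliminates every coherence issue you list as the main obstacle.

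The second gap is in the auxiliary game: you have COM ``include the club of closure points of $\eta$ in every $A_\alpha$''. That is not a legal strategy, because $\eta$ is constructed in response to INC's moves, so at stage $\alpha$ COM cannot name a club depending on the whole future branch. The paper's COM instead plays end-segments $A_\alpha=[\xi_\alpha,\lambda)$ with $\xi_\alpha$ chosen at stage $\alpha$ to dominate $\sup(\eta_\alpha(i):i<\alpha)$ and the length of the current root; any limit $\gamma$ in the diagonal intersection is then automatically a closure point, giving $\eta_\gamma\in{}^\gamma\gamma=Y_\gamma$, and $\eta_\gamma$ is a limit of roots, hence a splitting node (for $\ell=2$ by the defining clause of $\qone$; for $\ell=3,4$ after shrinking INC's answers into $\bbQ^{3,*}_\lambda$, resp.\ using the definition of $\qfour$), which yields $(T^\gamma)_{\eta_\gamma}\leq_\pr q_{\gamma,\eta_\gamma}=(T^{\gamma+1})_{\eta_\gamma}\leq_\pr(p^*)_{\eta_\gamma}\leq_\pr r_\gamma$ as required by $(\odot)$. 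Both defects are repairable along these lines, but as written the argument does not go through.
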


\begin{proof}
Let $1<\ell\leq 4$, $T\in\qell$. Consider the following strategy $\st$ of
Generic in the game $\Game^{\rm main}_{\bar{Y}}(T,\qell,\leq,\leq_\pr)$.  

In the course of the play, in addition to her innings $\langle
T_{\delta,\eta}:\eta\in Y_\delta\rangle$, Generic chooses also sets
$A_\delta\subseteq Y_\delta$ and conditions $T^\delta\in \qell$ so that
$T^\delta$ is decided before the stage $\delta$ of the game.  Suppose 
that the two players arrived to a stage $\delta<\lambda$.  If $\delta=0$
then Generic lets $T^0=T$ and if $\delta$ is limit, then she puts
$T^\delta=\bigcap\limits_{i<\delta} T^i$ (in both cases
$T^\delta\in\qell$). Now Generic determines $A_\delta$ and $\langle
T_{\delta,\eta}:\eta\in Y_\delta\rangle$ as follows. She sets
$A_\delta=T^\delta\cap Y_\delta$ and then she lets $\langle
T_{\delta,\eta}:\eta\in Y_\delta\rangle \subseteq\qell$ be a system of
pairwise incompatible conditions chosen so that 
\begin{itemize}
\item if $\eta\in A_\delta$ then $T_{\delta,\eta}=(T^\delta)_\eta$.
\end{itemize}
Generic's inning at this stage is $\langle T_{\delta,\eta}:\eta\in Y_\delta
\rangle$. After this Antigeneric answers with a system $\langle
S_{\delta,\eta}:\eta\in Y_\delta\rangle\subseteq\qell$ such that
$T_{\delta,\eta}\leq_\pr S_{\delta,\eta}$, and then Generic writes aside  
\[T^{\delta+1}\stackrel{\rm def}{=}\big\{t\in T^\delta: \big(\exists \eta
\in A_\delta\big)\big(\eta\trianglelefteq t\ \&\ t\in S_{\delta,\eta} \big) 
\mbox{ or }\big(\forall\alpha\leq \lh(t)\big)\big(t\rest\alpha\notin
A_\delta\big)\big\}.\] 
It should be clear that $T^{\delta+1}$ is a condition in $\qell$.  

After the play is finished and sequences 
\[\langle T_{\delta,\eta},S_{\delta,\eta}:\delta<\lambda\ \&\ \eta\in
Y_\delta\rangle\quad\mbox{ and }\quad \langle A_\delta,T^\delta:
\delta<\lambda\rangle\]
have been constructed, Generic lets
\[S=\bigcap\limits_{\delta<\lambda} T^\delta\subseteq T.\]

\begin{claim}
  \label{cl1}
$S\in\qell$ is aux-generic over $\bar{S}=\langle S_{\delta,\eta}:
\delta<\lambda\ \&\ \eta\in Y_\delta\rangle$. 
\end{claim}

\begin{proof}[Proof of the Claim]
First note that the sequence $\langle T^\delta, F_\delta=T^\delta\cap 
{}^\delta\lambda:\delta<\lambda\rangle$ satisfies the assumptions of Lemma  
\ref{pre2.5} and hence $S\in\qell$.   

Now we consider the three possible cases separately. 

\noindent {\sc Case}\quad $\ell=2$.\\
Let us describe a strategy $\st^*$ of COM in the game
$\auxzero(S,\bar{S},\qone,\leq,\leq_\pr)$. It instructs COM to play as
follows. Aside, COM picks also ordinals $\xi_\delta<\lambda$ so that after
arriving at a stage $\delta<\lambda$, when a sequence  $\langle 
(S_\alpha,A_\alpha,\eta_\alpha,S'_\alpha),\xi_\alpha:\alpha<\delta \rangle$
has been already constructed, she answers with $S_\delta,A_\delta,
\eta_\delta$ (and $\xi_\delta$) chosen so that the following demands are
satisfied. 
\begin{enumerate}
\item[(A)] $S_0=S$, $\xi_0=\lh(\mrot(S))+942$, $A_0=[\xi_0,\lambda)$ and
  $\eta_0=\langle\rangle$.
\item[(B)] If $\delta$ is a successor ordinal, say $\delta=\alpha+1$, then
\[\eta_\alpha\vtl\eta_\delta\in S_\alpha'\cap {}^\delta\lambda,\quad
\xi_\delta=\xi_\alpha+\sup(\eta_\delta(i):i<\delta)+ \lh\big(\mrot\big(
(S_\alpha')_{\eta_\delta}\big)\big)+942,\]
$A_\delta=[\xi_\delta,\lambda)$ and
$S_\delta=(S_\alpha')_{\eta_\delta}$. (Note: then we will also have
$\eta_\delta\in S_\delta'$.)   
\item[(C)] If $\delta$ is a limit ordinal, then $\eta_\delta=
\bigcup\limits_{\alpha<\delta}\eta_\alpha$, $\xi_\delta=\sup(\xi_\alpha:
\alpha<\delta)+942$,  $A_\delta=[\xi_\delta,\lambda)$ and $S_\delta=
\bigcap\limits_{\alpha<\delta}S_\alpha=\bigcap\limits_{\alpha<\delta}
S'_\alpha=\bigcap\limits_{\alpha<\delta}\big(S'_\alpha \big)_{\eta_\delta}$.
(Note: then we will also have $\eta_\delta\in S_\delta'$.)    
\end{enumerate}
Note that if $\langle (S_\alpha,A_\alpha,\eta_\alpha,S'_\alpha):\alpha<\lambda
\rangle$ is a play in which COM follows $\st^*$ and $\delta\in
\mathop{\triangle}\limits_{\alpha<\lambda} A_\alpha$ is a limit ordinal,
then $\eta_\delta\in Y_\delta\cap S$ and it is a limit of splitting points in
$S_\alpha'$, so also $|\suc_{S_\alpha'}(\eta_\delta)|>1$ for all
$\alpha<\delta$. Therefore, by (C), $\eta_\delta$ is a splitting node in
$S_\delta$ (and in $S$ as well). It follows from the description of the
$\delta$th move of Generic in $\Game^{\rm main}_{\bar{Y}}(T,\qone, \leq,
\leq_\pr)$, that  
\[(T^\delta)_{\eta_\delta}\leq_\pr
S_{\delta,\eta_\delta} =(T^{\delta+1})_{\eta_\delta}\leq_\pr
(S)_{\eta_\delta}\leq_\pr S_\delta.\]
Consequently, $\st^*$ is a winning strategy for COM. 

\noindent {\sc Case}\quad $\ell=3$.\\
The winning strategy $\st^*$ of COM in the game $\auxzero(S,\bar{S},\qthree,
\leq,\leq_\pr)$ is almost exactly the same as in the previous case. The only
difference is that now COM shrinks the answers $S'_\alpha$ of INC to members
of $\bbQ^{3,*}_\lambda$ pretending they were played in the game. The
argument that this is a winning strategy is exactly the same as before (as
$\bbQ^{3,*}_\lambda\subseteq \qone$). 

\noindent {\sc Case}\quad $\ell=4$. Similar.
\end{proof}
\end{proof}

The forcing notions considered above can be slightly generalized by allowing
the use of filters other than the club filter on $\lambda$. The forcing
notions $\bbQ^{\bar{E}}_E$ of \cite[Definition 1.11]{RoSh:888} and
$\bbP^{\bar{E}}_E$ of \cite[Definition 4.2]{RoSh:888} follow this
pattern. However, to apply the iteration theorems of \cite{RoSh:888} we
need to assume that the filter $E$ controlling splittings along branches is
concentrated on a stationary co-stationary set. Therefore the case of $E$
being the club filter seems to be of a different character. Putting general
filters on the splitting nodes {\em only\/} and controlling the splitting
levels by the club filter leads to Definition \ref{defFilter}. 

The forcing notion $\bbQ^{2,\bar{E}}$ was studied by Brown and Groszek
\cite{BrGr06} who described when this forcing adds a generic of minimal
degree. 

\begin{definition}
\label{defFilter}
Suppose that $\bar{E}=\langle E_t:t\in {}^{{<}\lambda}\lambda\rangle$ is a
system of $({<}\lambda)$--complete filters on $\lambda$. (These could be
principal filters.) We define forcing notions $\bbQ^{\ell,\bar{E}}$ for
$\ell=1,2,3,4$ as follows:
\begin{enumerate}
\item {\bf A condition} in $\bbQ^{2,\bar{E}}$ is a complete $\lambda$--tree
$T\subseteq {}^{{<}\lambda}\lambda$ such that  
\begin{enumerate}
\item[(a)] if $t\in T$, then $|\suc_T(t)|=1$ or $\suc_T(t)\in E_t$, and  
\item[(b)] $(\forall t\in T)(\exists s\in T)(t\vtl s\ \&\ |\suc_T(s)|>1)$, and    
\item[(c)$^2$] if $\langle t_i:i<j\rangle\subseteq T$ is $\vtl$--increasing, 
  $|\suc_T(t_i)|>1$ for all $i<j$ and $t=\bigcup\limits_{i<j}t_i$, then ($t\in
  T$ and) $|\suc_T(t)|>1$,
\end{enumerate}
{\bf the order} $\leq$ of $\bbQ^{2,\bar{E}}$ is the inverse
inclusion, i.e., $T_1\leq T_2$ if and only if $T_2\subseteq T_1$.
\item Forcings notions $\bbQ^{1,\bar{E}},\bbQ^{3,\bar{E}}, \bbQ^{4,\bar{E}}$ 
  are defined analogously, but the demand (c)$^2$ is replaced by the
  respective (c)$^\ell$:
\begin{enumerate}
\item[(c)$^1$] for every $\lambda$--branch $\eta\in\lim_\lambda(T)$ the set
  $\{\alpha\in \lambda: |\suc_T(\eta\rest\alpha)|>1\}$ contains a club of
  $\lambda$,\\  
\item[(c)$^3$] for some club $C\subseteq \lambda$ we have 
\[(\forall t\in T)(\lh(t)\in C\ \Rightarrow\ |\suc_T(t)|>1),\]
\item[(c)$^4$] $(\forall t\in T)(\mrot(T)\vtl t\ \Rightarrow\
  |\suc_T(t)|>1)$. 
\end{enumerate}
\item For $\ell=1,2,3,4$ we define a binary relation $\leq_\pr$ on
  $\bbQ^{\ell,\bar{E}}$ by\\ 
$T_1\leq_\pr T_2$ if and only if $T_1\leq T_2$ and $\mrot(T_1)=\mrot(T_2)$. 
\end{enumerate}
\end{definition}

\begin{remark}
  \label{oldfor}
Since in Definition \ref{defFilter} we allow the filters $E_t$ to be
principal, we may fit some classical forcings into our schema. If
$E_t=\{\lambda\}$ for each $t\in {}^{{<}\lambda}\lambda$, then
$\bbQ^{4,\bar{E}}$ is the $\lambda$--Cohen forcing $\bbC_\lambda$
(see Definition \ref{def3.2}(1)) and $\bbQ^{2,\bar{E}}$ is the
forcing $\bbD_\lambda$ from \cite[Definition 4.9(b)]{RoSh:655}. If for each
$t\in {}^{{<}\lambda}\lambda$ we let $E_t$ be the filter of all subsets
of $\lambda$ including $\{0,1\}$, then the forcing notion
$\bbQ^{2,\bar{E}}$ will be equivalent with Kanamori's
$\lambda$--Sacks forcing of \cite[Definition 1.1]{Ka80}.  
\end{remark}

\begin{proposition}
\label{stuffFilter}
Let $\bar{E}=\langle E_t:t\in {}^{{<}\lambda}\lambda\rangle$ be a system
of $({<}\lambda)$--complete filters on $\lambda$ and $\ell\in\{1,2,3,4\}$. 
\begin{enumerate}
\item $(\bbQ^{\ell,\bar{E}},\leq,\leq_\pr)$ is a forcing notion with
  $\lambda$--complete semi-purity. Moreover, the relations
  $(\bbQ^{\ell,\bar{E}},\leq)$ and $(\bbQ^{\ell,\bar{E}}, \leq_\pr)$ are
  $(<\lambda)$--complete.    
\item If $\lambda$ is strongly inaccessible, $Y_\delta= {}^\delta\delta$ for
  $\delta<\lambda$ and $\bar{Y}=\langle Y_\delta: \delta <\lambda\rangle$,
  then the forcing notions $(\bbQ^{\ell,\bar{E}},\leq,\leq_\pr)$ for
  $\ell\in\{2,3,4\}$ are $\lambda$-semi--purely proper over $\bar{Y}$.  
\end{enumerate}
\end{proposition}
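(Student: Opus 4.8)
The plan is to mirror the two results already established for the club--filter versions, namely Observation \ref{prop2.2} for part (1) and Proposition \ref{prop2.4} for part (2), and to isolate the single place where the special features of the club filter were used so that it can be replaced by the hypothesis that each $E_t$ is $({<}\lambda)$--complete. The guiding observation is that in Definition \ref{defFilter} the filters $E_t$ govern only the \emph{successor sets} $\suc_T(t)$ at splitting nodes, while the distribution of the \emph{splitting levels} along branches is still controlled by the club filter through the unchanged clauses $(c)^\ell$. Hence every argument about \emph{where} splitting occurs transfers verbatim, and only the arguments about membership of successor sets in a filter need adjustment.

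For part (1), I would show that $(\bbQ^{\ell,\bar{E}},\leq)$ and $(\bbQ^{\ell,\bar{E}},\leq_\pr)$ are $({<}\lambda)$--complete; this immediately yields strategic $({<}\lambda)$--completeness of $\leq$ and strategic $({\leq}\kappa)$--completeness of $\leq_\pr$ for every infinite $\kappa<\lambda=f(\alpha)$, which is exactly the $\lambda$--complete semi--purity of Definition \ref{def1.1} (the inclusion $\leq_\pr\subseteq\leq$ being immediate from the definition of $\leq_\pr$). Given a $\leq$--increasing (respectively $\leq_\pr$--increasing) sequence $\langle T_i:i<j\rangle$ with $j<\lambda$, the candidate upper bound is $S=\bigcap_{i<j}T_i$. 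The tree axioms, $\lambda$--completeness, clause (b) and clause $(c)^\ell$ (including the analysis of how the stems $\mrot(T_i)$ grow) are checked exactly as in Observation \ref{prop2.2}, since these concern only branches and splitting levels. The one new point is clause (a): for a fixed $t\in S$ the sets $\suc_{T_i}(t)$ form a $\subseteq$--decreasing chain of fewer than $\lambda$ sets, each of which is either a singleton or a member of $E_t$; hence either one of them is a singleton (and then $|\suc_S(t)|\leq 1$), or they all lie in $E_t$ and $\suc_S(t)=\bigcap_{i<j}\suc_{T_i}(t)\in E_t$ by $({<}\lambda)$--completeness of $E_t$. For $\leq_\pr$ one adds only the trivial remark that a $\leq_\pr$--increasing sequence has a common root, which is then the root of $S$.

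For part (2), I would reuse the strategy $\st$ of Generic from the proof of Proposition \ref{prop2.4} and the auxiliary strategy $\st^*$ of COM word for word, since these only manipulate roots, fronts and splitting levels and never refer to the particular successor sets. What must be re--established is the analogue of Lemma \ref{pre2.5} for $\bbQ^{\ell,\bar{E}}$, guaranteeing that the fusion $S=\bigcap_{\delta<\lambda}T^\delta$ again lies in $\bbQ^{\ell,\bar{E}}$. The proof of Lemma \ref{pre2.5} transfers with the single sentence ``every splitting node in $S$ splits into a club'' replaced by ``every splitting node of $S$ splits into a member of the corresponding $E_t$''; the derivation that $S$ is a complete $\lambda$--tree and the stabilization statement (v) are untouched, as they use only the front structure. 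The three cases $\ell=2,3,4$ (we exclude $\ell=1$, exactly as in Proposition \ref{prop2.4}) then go through unchanged, since each of them invokes only the club structure of the splitting levels through the clauses $(c)^\ell$.

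The step I expect to be the main obstacle is precisely this last membership check, because the naive computation gives $\suc_S(t)=\bigcap_{\delta<\lambda}\suc_{T^\delta}(t)$, an intersection of $\lambda$ many sets, and $({<}\lambda)$--completeness of $E_t$ says nothing about such intersections. The resolution is the stabilization property (v) in the proof of Lemma \ref{pre2.5}: because the fronts $F_\alpha=T^\alpha\cap{}^\alpha\lambda$ refine one another and survive into the later $T^\beta$, for every $t\in S$ and every $\alpha>\lh(t)$ the node $t$ (and each of its immediate successors in $S$) lies below a node of $F_\alpha$, whence $\suc_S(t)=\suc_{T^\alpha}(t)$. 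Thus the successor set of a fixed node of $S$ is already equal to a \emph{single} set $\suc_{T^{\lh(t)+1}}(t)\in E_t$ rather than a genuine $\lambda$--intersection, and the membership $\suc_S(t)\in E_t$ is immediate. This is the one conceptual point that makes the whole transfer from the club filter to arbitrary $({<}\lambda)$--complete filters work, and it is exactly why in Definition \ref{defFilter} the filters are allowed to act only on the splitting nodes.
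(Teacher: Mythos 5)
Your proposal is correct and follows the paper's own route exactly: the paper's proof of Proposition \ref{stuffFilter} is literally ``Same as \ref{prop2.2}, \ref{prop2.4}'', and you have correctly isolated the only two points where the club structure must be replaced by $({<}\lambda)$--completeness of the $E_t$'s, namely the successor sets along a short $\leq$--decreasing chain of conditions and the stabilization $\suc_S(t)=\suc_{T^{\lh(t)+1}}(t)$ in the fusion of Lemma \ref{pre2.5}. (One marginal caveat, which affects the paper's statement as much as your argument: if some $E_t$ is the principal filter generated by a singleton, a decreasing $({<}\lambda)$--sequence of successor sets in $E_t$ of size $>1$ can intersect to a singleton, so $\mrot$ need not be preserved and $({<}\lambda)$--completeness of $\leq_\pr$ can fail; this degenerate case is excluded in all the intended examples of Remark \ref{oldfor}, where every member of every $E_t$ has at least two elements.)
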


\begin{proof}
Same as \ref{prop2.2}, \ref{prop2.4}. 
\end{proof}

Close relatives of the forcing notions $\bbQ^{\ell,\bar{E}}$ were considered
in \cite[Section B.8]{RoSh:777} and \cite[Definition 4.6]{RoSh:888}. The
modification now is that we consider trees branching into less than
$\lambda$ successor nodes (but there are many successors from the point of
view of suitably complete filters).   

\begin{definition}
\label{boundedPQ}
Assume that 
\begin{itemize}
\item $\lambda$ is strongly inaccessible, $f:\lambda\longrightarrow\lambda$
  is a increasing function such that each $f(\alpha)$ is a regular uncountable
  cardinal and $\prod\limits_{\xi<\alpha} f(\xi)^{|\alpha|}<f(\alpha)$ (for
  $\alpha<\lambda$), 
\item $\bar{F}=\langle F_t:t\in\bigcup\limits_{\alpha<\lambda}
  \prod\limits_{\xi<\alpha} f(\xi)\rangle$ where $F_t$ is a
  ${<}f(\alpha)$--complete filter on $f(\alpha)$ whenever $t\in
  \prod\limits_{\xi<\alpha}f(\xi)$, $\alpha<\lambda$. 
\end{itemize}
\begin{enumerate}
\item We define a forcing notion $\bbQ^1_{f,\bar{F}}$ as follows.\\ 
{\bf A condition} in $\bbQ^1_{f,\bar{F}}$ is a complete
  $\lambda$--tree $T\subseteq \bigcup\limits_{\alpha<\lambda}
  \prod\limits_{\xi<\alpha}f(\xi)$ such that  
\begin{enumerate}
\item[(a)] for every $t\in T$, either $|\suc_T(t)|=1$ or $\suc_T(t)\in  
  F_t$, and
\item[(b)] $(\forall t\in T)(\exists s\in T)(t\vtl s\ \&\ |\suc_T(s)|>1)$,
  and 
\item[(c)$^1$] for every $\eta\in\lim_\lambda(T)$ the set $\{\alpha<\lambda: 
\suc_T(\eta\rest\alpha)\in F_{\eta\rest\alpha}\}$ contains a club of
$\lambda$ . 
\end{enumerate}
\noindent {\bf The order} of $\bbQ^1_{f,\bar{F}}$ is the reverse inclusion. 
\item Forcing notions $\bbQ^\ell_{f,\bar{F}}$ for $\ell=2,3,4$ are defined
  similarly, but the demand (c)$^1$ is replaced by the respective
  (c)$^\ell$:  
\begin{enumerate}
\item[(c)$^2$] if $\langle t_i:i<j\rangle\subseteq T$ is $\vtl$--increasing, 
$|\suc_T(t_i)|>1$ for all $i<j$ and $t=\bigcup\limits_{i<j}t_i$, then ($t\in 
T$ and) $|\suc_T(t)|>1$,
\item[(c)$^3$] for some club $C$ of $\lambda$ we have 
\[\big(\forall t\in T\big)\big(\lh(t)\in C\ \Rightarrow\ \suc_T(T)\in
F_t \big).\]
\item[(c)$^4$] $(\forall t\in T)(\mrot(T)\vtl t\ \Rightarrow\
  |\suc_T(t)|>1)$. 
\end{enumerate}
\item For $\ell=1,2,3,4$ and $\alpha<\lambda$ we define a binary relation
$\leq^\alpha_\pr$ on $\bbQ^\ell_{f,\bar{F}}$ by\\ 
$T_1\leq_\pr^\alpha T_2$ if and only if either $T_1=T_2$ or $T_1\leq T_2$,
$\mrot(T_1)=\mrot(T_2)$ and $\lh(\mrot(T_2))\geq \alpha$.  
\end{enumerate}
\end{definition}

\begin{proposition}
\label{boundedFilter}
Assume $\lambda,f,\bar{F}$ are as in \ref{boundedPQ}. 
\begin{enumerate}
\item $(\bbQ^\ell_{f,\bar{F}},\leq,\bar{\leq}_\pr)$ is a forcing notion with 
  $f$--complete semi-purity.  
\item If $Y_\delta=\prod\limits_{\xi<\delta}f(\xi)$ for $\delta<\lambda$ and 
  $\bar{Y}=\langle Y_\delta: \delta <\lambda\rangle$, then $\bar{Y}$ is an
  indexing sequence and the forcing notions $(\bbQ^\ell_{f,\bar{F}},
  \leq,\bar{\leq}_\pr)$ for $\ell\in\{2,3,4\}$ are $f$-semi--purely proper
  over $\bar{Y}$.     
\end{enumerate}
\end{proposition}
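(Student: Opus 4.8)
The plan is to mirror the proofs of Observation~\ref{prop2.2} and Proposition~\ref{prop2.4}, isolating the one feature that is genuinely new: a condition $T$ branches at a node $t$ of level $\alpha$ into an $F_t$--large subset of $f(\alpha)$, and $F_t$ is only $({<}f(\alpha))$--complete, rather than being the $\lambda$--complete club filter. Everything hinges on the arithmetic hypothesis of Definition~\ref{boundedPQ}: since $\prod_{\xi<\alpha}f(\xi)\geq 2$ for $\alpha\geq 1$, the bound $\big(\prod_{\xi<\alpha}f(\xi)\big)^{|\alpha|}<f(\alpha)$ forces $f(\alpha)>|\alpha|$, whence $f(\alpha)\geq|\alpha|^+>\alpha$; in particular $\alpha<f(\alpha)$ for every $\alpha<\lambda$. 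This inequality is the exact substitute for the $\lambda$--completeness of the club filter exploited in Lemma~\ref{pre2.5}: along any branch, a node of level $\beta\geq\alpha$ carries a filter closed under intersections of $\leq\alpha$ (indeed $<f(\alpha)$) sets.

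For part~(1), clause~(a) of Definition~\ref{def1.1} ($\leq_\pr^\alpha\subseteq\leq$) is immediate from Definition~\ref{boundedPQ}(3). For clause~(b), I would first establish strategic $({<}\lambda)$--completeness of $(\bbQ^\ell_{f,\bar{F}},\leq)$ by a stem--fusion strategy in $\Game_0^\lambda(\bbQ^\ell_{f,\bar{F}},T)$: at stage $j$ Complete answers Incomplete's $p_j$ by extending the committed stem $\sigma_{j-1}$ (which necessarily survives, since below its root a condition is a single branch) to a splitting node $\sigma_j\vtl$--above it inside $p_j$, and plays $q_j=(p_j)_{\sigma_j}$, so $\lh(\sigma_j)$ strictly increases. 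At a limit $i<\lambda$ the branch $\sigma_i=\bigcup_{j<i}\sigma_j$ has level $\geq i$, and because $|i|\leq\lh(\sigma_i)<f(\lh(\sigma_i))$ the set $\bigcap_{j<i}\suc_{p_j}(\sigma_i)$ stays $F_{\sigma_i}$--large; Lemma~\ref{pre2.5} (whose proof goes through verbatim with the filters $F_t$ in place of the club filter, using $\alpha<f(\alpha)$) then shows $\bigcap_{j<i}q_j$ contains a condition, so Incomplete always has a legal move. Strategic $({\leq}\kappa)$--completeness of $\leq_\pr^\alpha$ for infinite $\kappa<f(\alpha)$ is then easier: in a play of $\Game_0^{\kappa+1}\big((\bbQ^\ell_{f,\bar{F}},\leq_\pr^\alpha),T\big)$ all conditions share a fixed root of length $\geq\alpha$, so every surviving splitting node sits at a level $\beta\geq\alpha$, its filter $F_t$ is $({<}f(\beta))$--complete, hence $({<}f(\alpha))$--complete, and the intersection of the $\leq\kappa<f(\alpha)$ many successor sets stays large; Lemma~\ref{pre2.5} again yields that the intersection is a condition with the same root.

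For part~(2), I would first check that $\bar{Y}$ is an indexing sequence: each $Y_\delta=\prod_{\xi<\delta}f(\xi)$ is a nonempty subset of ${}^\delta\lambda$ (as $f(\xi)<\lambda$), and $|Y_\delta|=\prod_{\xi<\delta}f(\xi)<f(\delta)<\lambda$ by Definition~\ref{boundedPQ}. Then I would run verbatim the strategy of Generic from Proposition~\ref{prop2.4}: aside she builds $T^\delta\in\bbQ^\ell_{f,\bar{F}}$ (with $T^0=T$ and $T^\delta=\bigcap_{i<\delta}T^i$ at limits) and fronts $F_\delta=T^\delta\cap\prod_{\xi<\delta}f(\xi)$, sets $A_\delta=T^\delta\cap Y_\delta$, plays pairwise incompatible $\langle T_{\delta,\eta}:\eta\in Y_\delta\rangle$ with $T_{\delta,\eta}=(T^\delta)_\eta$ for $\eta\in A_\delta$, and records $T^{\delta+1}$ exactly as there. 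After the play $S=\bigcap_{\delta<\lambda}T^\delta$ lies in $\bbQ^\ell_{f,\bar{F}}$ by Lemma~\ref{pre2.5}, and its aux--genericity over $\bar{S}$ is witnessed by the same strategy $\st^*$ for COM (picking, along a branch, the node $\eta_\delta$ at level $\delta$ together with auxiliary ordinals $\xi_\delta$); the three cases $\ell=2,3,4$ are handled just as in Proposition~\ref{prop2.4}.

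I expect the only genuinely new verification --- and the main, if mild, obstacle --- to be that COM's strategy still wins under the $\alpha$--dependent purity relation $\leq_\pr^\gamma$, which now additionally demands $\lh(\mrot)\geq\gamma$. The point is that for a limit $\gamma\in\mathop{\triangle}\limits_{\alpha<\lambda}A_\alpha$ the node $\eta_\gamma$ selected by $\st^*$ is a limit of splitting points of the relevant conditions, hence is itself a splitting node at level exactly $\gamma$; therefore both $r_\gamma$ and $q_{\gamma,\eta_\gamma}=(T^{\gamma+1})_{\eta_\gamma}$ have root $\eta_\gamma$ with $\lh(\eta_\gamma)=\gamma\geq\gamma$, so $q_{\gamma,\eta_\gamma}\leq_\pr^\gamma r_\gamma$ holds and the winning condition $(\odot)$ of the auxiliary game is met. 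Since the bounded branching enters the arguments only through chains of length $<\lambda$ at each level, for which $({<}f(\alpha))$--completeness of the $F_t$ suffices by $\alpha<f(\alpha)$, no further change to the reasoning of Observation~\ref{prop2.2} and Proposition~\ref{prop2.4} is required, and $\bbQ^\ell_{f,\bar{F}}$ is $f$--semi--purely proper over $\bar{Y}$ for $\ell\in\{2,3,4\}$.
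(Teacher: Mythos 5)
The paper's own ``proof'' is just the one-line remark that the argument is similar to Observation~\ref{prop2.2} and Proposition~\ref{prop2.4}, and your proposal carries out exactly that adaptation, correctly locating the two places where the new hypotheses genuinely enter: the inequality $|\alpha|<f(\alpha)$ (extracted from the arithmetic assumption) substituting for $\lambda$--completeness of the club filter, which forces the stem--fusion form of strategic completeness for $\leq$, and the level requirement built into $\leq_\pr^\gamma$, which is met because the node $\eta_\gamma$ selected in the auxiliary game sits at level exactly $\gamma$. Your argument is correct and takes essentially the same approach as the paper.
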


\begin{proof}
Similar to \ref{prop2.2}, \ref{prop2.4}. 
\end{proof}

\begin{observation}
  Let $\eta\in {}^\lambda\lambda$ and $Y_\alpha=\{\eta\rest\alpha\}$ for
  $\alpha<\lambda$. Suppose that $(\bbQ,\leq)$ is a strategically
  $({\leq}\lambda)$--complete forcing notion and let $\leq^\alpha_\pr$ be
  $\leq$ (for $\alpha<\lambda$). Then $\bbQ$ is $\lambda$--semi-purely
  proper over $\langle Y_\xi:\xi<\lambda\rangle$ and the club filter with
  $\langle\leq^\alpha_\pr:\alpha<\lambda\rangle$ witnessing this.
\end{observation}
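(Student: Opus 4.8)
The plan is to verify the two clauses of Definition \ref{def1.3}(5) in turn, with $f\equiv\lambda$ and $\bar{\leq}_\pr=\langle{\leq}:\alpha<\lambda\rangle$. First, the $\lambda$--complete semi-purity of $(\bbQ,\leq,\bar{\leq}_\pr)$ is essentially free. Clause (a) of Definition \ref{def1.1} is trivial since each $\leq^\alpha_\pr$ equals $\leq$. For clause (b), a winning strategy of Complete in $\Game_0^{\lambda+1}(\bbQ,r)$ (which exists for every $r$ by the strategic $({\leq}\lambda)$--completeness of $(\bbQ,\leq)$) restricts to a winning strategy in $\Game_0^{\delta+1}(\bbQ,r)$ for every $\delta\leq\lambda$; in particular $(\bbQ,\leq)$ is strategically $({<}\lambda)$--complete and $(\bbQ,\leq^\alpha_\pr)=(\bbQ,\leq)$ is strategically $({\leq}\kappa)$--complete for every infinite $\kappa<\lambda=f(\alpha)$.

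The real content is to produce, for each $p\in\bbQ$, a winning strategy for Generic in $\Game^{\rm main}_{\bar Y}(p,\bbQ,{\leq},{\leq})$. Since each $Y_\alpha=\{\eta\rest\alpha\}$ is a singleton, at stage $\alpha$ Generic's inning is a single condition $p_\alpha:=p_{\alpha,\eta\rest\alpha}$ (the incompatibility clause is vacuous) and Antigeneric answers with a single $q_\alpha:=q_{\alpha,\eta\rest\alpha}\geq p_\alpha$ (as $\leq^\alpha_\pr={\leq}$). I would have Generic build $\langle p_\alpha:\alpha<\lambda\rangle$ as a $\leq$--increasing chain with $p_0\geq p$ and $q_\alpha\leq p_{\alpha+1}$, by running alongside a play of $\Game_0^{\lambda+1}(\bbQ,p)$ in which Complete follows a fixed winning strategy $\st_0$: Generic feeds $p$ and the successive answers $q_\alpha$ as the moves of Incomplete and reads off Complete's responses as her own moves $p_{\alpha+1}$. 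The one delicate point — and the only obstacle worth naming — is that at a limit stage $\gamma$ Generic must produce $p_\gamma$ before seeing any new answer, so there is no $q$ to feed as Incomplete's move. Here I use that $\st_0$ is \emph{winning}: the partial play of length $\gamma$ can be continued, which is precisely to say that an upper bound of $\{p_\alpha,q_\alpha:\alpha<\gamma\}$ exists; Generic takes such a bound as Incomplete's move at $\gamma$ and lets $p_\gamma$ be Complete's reply. After $\lambda$ steps the same winning-ness provides Incomplete's legal move at stage $\lambda$, i.e. an upper bound $p^*\geq p$ of the whole chain; since $q_\alpha\leq p_{\alpha+1}\leq p^*$ for all $\alpha$, the condition $p^*$ dominates every answer, so $p^*\geq q_\alpha$ for all $\alpha<\lambda$.

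It remains to check that $p^*$ is aux-generic over $\bar q=\langle q_{\alpha,\eta\rest\alpha}:\alpha<\lambda\rangle$, i.e. that Complete wins $\Game^{\rm aux}_{\bar Y}(p^*,\bar q,\bbQ,{\leq},{\leq})$. The plan is for Complete to play $\eta_\alpha=\eta\rest\alpha$ and $A_\alpha=\lambda$ (note $\lambda$ lies in the club filter $D$ and $\mathop{\triangle}\limits_{\alpha<\lambda}A_\alpha=\lambda$), while keeping the chain $p^*=r_0\leq r_0'\leq r_1\leq\cdots$ alive for $\lambda$ steps by the very same device as above, supplying the limit upper bounds herself from the strategic completeness of $(\bbQ,\leq)$ starting at $p^*$. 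Then every limit $\gamma<\lambda$ lies in $\mathop{\triangle}\limits_{\alpha<\lambda}A_\alpha$, and the winning condition $(\odot)$ asks only that $\eta_\gamma=\eta\rest\gamma\in Y_\gamma$, which holds by construction, and that $q_{\gamma,\eta\rest\gamma}\leq^\gamma_\pr r_\gamma$, i.e. $q_\gamma\leq r_\gamma$; but $r_\gamma\geq r_0=p^*\geq q_\gamma$, so this is immediate. Hence Complete wins, $p^*$ witnesses $(\boxdot)$, Generic's strategy is winning, and $\langle\leq^\alpha_\pr:\alpha<\lambda\rangle$ witnesses the $\lambda$--semi-pure properness of $\bbQ$ over $\langle Y_\xi:\xi<\lambda\rangle$ and the club filter.
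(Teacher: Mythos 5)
Your argument is correct. The paper states this as an \emph{Observation} and supplies no proof at all, so there is nothing to compare against; your writeup is exactly the argument the authors are implicitly relying on: with $Y_\alpha$ a singleton and $\leq^\alpha_\pr={\leq}$ the incompatibility clause is vacuous, Generic can thread the play of $\masgame$ through a play of $\Game_0^{\lambda+1}(\bbQ,p)$ in which Complete follows a winning strategy (the existence of Incomplete's legal move at limit stages, and at stage $\lambda$, is what hands Generic her limit moves and the final bound $p^*$), and aux-genericity of $p^*$ is then immediate since $p^*$ already dominates every $q_\alpha$ and COM only needs strategic $({<}\lambda)$--completeness to survive the auxiliary game with $A_\alpha=\lambda$ and $\eta_\alpha=\eta\rest\alpha$.
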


\begin{corollary}
  \label{cor2.6}
  Let $\lambda$ be a strongly inaccessible cardinal. Suppose that $\bar{E}$
  is as in \ref{defFilter} and $f,\bar{F}$ are as in \ref{boundedPQ}. Let
  $\bar{\bbQ}= \langle \bbP_\xi,\name{\bbQ}_\xi:\xi<\gamma\rangle$ be a
  $\lambda$--support iteration such that for every $\xi<\gamma$ the iterand
  $\name{\bbQ}_\xi$ is either strategically $({\leq}\lambda)$--complete, or
  it is one of $\qone,\qthree, \qfour, \bbQ^{2,\bar{E}}, \bbQ^{3,\bar{E}},
  \bbQ^{4,\bar{E}}, \bbQ^2_{f,\bar{F}}, \bbQ^3_{f,\bar{F}},
  \bbQ^4_{f,\bar{F}}$. Then $\bbP_\gamma=\lim(\bar{\bbQ})$ is
  $\lambda$--proper in the standard sense.
\end{corollary}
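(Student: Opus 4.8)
The plan is to exhibit $\bar{\bbQ}$ as an instance of Theorem \ref{thm1.6} for a single, globally chosen quadruple of parameters, and then simply quote that theorem. Fix $D$ to be the club filter on $\lambda$, keep the function $f$ of Definition \ref{boundedPQ}, and set $\kappa_\delta=\max(\aleph_0,\prod_{\xi<\delta}f(\xi))$ for $\delta<\lambda$. The point of this choice is a cardinal computation using that $\lambda$ is a strong limit: for infinite $\delta$ one has $|{}^\delta\delta|=|\delta|^{|\delta|}=2^{|\delta|}\le\prod_{\xi<\delta}f(\xi)=\kappa_\delta$ (each factor of the product is $\ge 2$), while trivially $|\prod_{\xi<\delta}f(\xi)|=\kappa_\delta$; the finite cases are immediate. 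Moreover $(\kappa_\delta)^{|\delta|}=(\prod_{\xi<\delta}f(\xi))^{|\delta|}<f(\delta)$ by the defining assumption of Definition \ref{boundedPQ}. Hence $f\colon\lambda\longrightarrow\lambda+1$ together with $\bar{\kappa}=\langle\kappa_\delta:\delta<\lambda\rangle$ satisfies the cardinal hypothesis $(\kappa_\alpha)^{|\alpha|}<f(\alpha)$ of Theorem \ref{thm1.6}.

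Next, for each $\xi<\gamma$ I would attach an indexing sequence $\bar{Y}^\xi$ according to the type of $\name{\bbQ}_\xi$: singletons $Y^\xi_\delta=\{\eta\rest\delta\}$ (for a fixed $\eta\in{}^\lambda\lambda$, with $\leq^\alpha_\pr=\leq$) when $\name{\bbQ}_\xi$ is strategically $({\leq}\lambda)$--complete; $Y^\xi_\delta={}^\delta\delta$ when $\name{\bbQ}_\xi$ is one of $\qone,\qthree,\qfour,\bbQ^{2,\bar E},\bbQ^{3,\bar E},\bbQ^{4,\bar E}$; and $Y^\xi_\delta=\prod_{\xi'<\delta}f(\xi')$ when $\name{\bbQ}_\xi$ is one of $\bbQ^2_{f,\bar F},\bbQ^3_{f,\bar F},\bbQ^4_{f,\bar F}$. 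In every case $|Y^\xi_\delta|\le\kappa_\delta$ by the inequalities above. The required semi--pure properness is then furnished by Section 3: the observation immediately preceding the corollary in the first case, Propositions \ref{prop2.4} and \ref{stuffFilter}(2) in the second, and Proposition \ref{boundedFilter}(2) in the third. The first two give $\lambda$--semi--pure properness over $\bar{Y}^\xi,D$, which, since $f\le\lambda$, downgrades to $f$--semi--pure properness over the same $\bar{Y}^\xi,D$ by the monotonicity observation following Definition \ref{def1.3}; the third gives $f$--semi--pure properness outright. Thus in all cases $\name{\bbQ}_\xi$ is $f$--semi--purely proper over $\bar{Y}^\xi$ and $D$.

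The one genuine verification is that these facts, established in Section 3 under the blanket assumption that $\lambda$ is strongly inaccessible, persist in $V^{\bbP_\xi}$ and for the filter $D^{\bV^{\bbP_\xi}}$ named in Theorem \ref{thm1.6}. Here I would use that every iterand is (forced to be) strategically $({<}\lambda)$--complete --- this is part of $f$--complete semi--purity via Definition \ref{def1.1}(b), and part of strategic $({\leq}\lambda)$--completeness --- so the iteration $\bbP_\xi$ is strategically $({<}\lambda)$--complete, hence $({<}\lambda)$--distributive; consequently $\cP(\mu)^{V^{\bbP_\xi}}=\cP(\mu)^{\bV}$ for all $\mu<\lambda$, $\lambda$ remains strongly inaccessible, and all the cardinal arithmetic above is preserved. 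Moreover the COM--strategies produced in the proofs of Propositions \ref{prop2.4}, \ref{stuffFilter}, \ref{boundedFilter} use only end--segments $[\xi,\lambda)$ as the sets $A_\delta$, and end--segments lie in every normal (hence uniform) filter; so the semi--pure properness holds verbatim for $D^{\bV^{\bbP_\xi}}$, not merely for the full club filter of $V^{\bbP_\xi}$. With every hypothesis of Theorem \ref{thm1.6} in place, that theorem yields that $\bbP_\gamma=\lim(\bar{\bbQ})$ is $\lambda$--proper in the standard sense.

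I expect the main obstacle to be precisely this reconciliation of parameters. The tree forcings $\qell,\bbQ^{\ell,\bar E}$ naturally live over the large index sets ${}^\delta\delta$ and are only $\lambda$--semi--purely proper, whereas the bounded forcings $\bbQ^\ell_{f,\bar F}$ are $f$--semi--purely proper but only over the smaller sets $\prod_{\xi<\delta}f(\xi)$; fitting all of them simultaneously under one pair $(f,\bar{\kappa})$ obeying $(\kappa_\delta)^{|\delta|}<f(\delta)$ is what dictates the choice $\kappa_\delta=\max(\aleph_0,\prod_{\xi<\delta}f(\xi))$ and rests on the strong--limit inequality $2^{|\delta|}\le\prod_{\xi<\delta}f(\xi)$ together with the arithmetic assumed in Definition \ref{boundedPQ}.
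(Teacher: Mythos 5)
The paper states this corollary without proof, and your argument supplies exactly the intended derivation: apply Theorem \ref{thm1.6} with $D$ the club filter, the $f$ of Definition \ref{boundedPQ}, and the indexing sequences furnished by Propositions \ref{prop2.4}, \ref{stuffFilter}, \ref{boundedFilter} and the preceding Observation, downgrading via the monotonicity observation after Definition \ref{def1.3}. Your choice $\kappa_\delta=\max(\aleph_0,\prod_{\xi<\delta}f(\xi))$ correctly reconciles the two sizes of index sets with the hypothesis $(\kappa_\delta)^{|\delta|}<f(\delta)$, and your remarks on preservation under the $({<}\lambda)$--strategically complete initial segments and on $D^{\bV^{\bbP_\xi}}$ being the club filter of $\bV^{\bbP_\xi}$ are exactly the routine verifications the authors left implicit.
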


\section{Are $\qone$, $\qtwo$ very different?}
The forcing notions $\qtwo$ and $\qone$ appear to be very close. In this
section we will show that, consistently, they are equivalent, but also
consistently, they may be different. 

\begin{lemma}
 \label{lem3.1}
Assume $T\in \bT^{\rm club}$ and consider $(T,\vtl)$ as a forcing
notion. Let $\name{\eta}$ be a $T$--name for the generic $\lambda$--branch
added by $T$. Suppose that  
\[\forces_T\mbox{`` the set }\{\alpha<\lambda:|\suc_T(\name{\eta}\rest
\alpha)|>1\}\mbox{ contains a club ''.}\]
Then there is $T^*\subseteq T$ such that $T^*\in \bbQ^2_\lambda$. 
\end{lemma}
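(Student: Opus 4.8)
The plan is to read off from the hypothesis a $T$--name $\name C$ for a club contained in the splitting levels of $\name\eta$, and then to keep in $T^*$ exactly those splitting nodes that are forced to sit at a \emph{limit point} of $\name C$. The reason for working with limit points, rather than with the elements of $\name C$ themselves, is that ``being a limit point of $\name C$ of my own length'' is a property that a node of that very length can force, and it is automatically preserved under increasing unions; this is precisely what will supply the closure half of the required club.

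Concretely, I would fix a $T$--name $\name C$ with $\forces_T$``$\name C$ is a club and $\name C\subseteq\{\alpha<\lambda:|\suc_T(\name\eta\rest\alpha)|>1\}$'', and set
\[
\Sigma=\bigl\{s\in T:s\forces_T\text{``}\sup(\name C\cap\lh(s))=\lh(s)\text{''}\bigr\}.
\]
Using that $(T,\vtl)$ is $({<}\lambda)$--closed (since $T$ is a complete $\lambda$--tree), I would first record three facts. \emph{(i) Every $s\in\Sigma$ is a splitting node of $T$:} here $\lh(s)$ is a limit ordinal and, as $\name C$ is forced to be closed, $s\forces_T\lh(s)\in\name C$; since $s\forces_T\name\eta\rest\lh(s)=s$ this gives $s\forces_T|\suc_T(s)|>1$, a statement about the ground--model objects $T,s$, hence $|\suc_T(s)|>1$ holds in $\bV$. \emph{(ii) $\Sigma$ is closed under unions of $\vtl$--chains:} if $\langle s_i:i<j\rangle\subseteq\Sigma$ is $\vtl$--increasing with union $s=\bigcup_{i<j}s_i\in T$, then each $s_i$ forces $\name C$ to be unbounded below $\lh(s_i)$, and $s\trianglerighteq s_i$ inherits this, so (as $\lh(s)=\sup_i\lh(s_i)$) the node $s$ forces $\name C\cap\lh(s)$ unbounded in $\lh(s)$, i.e. $s\in\Sigma$. \emph{(iii) $\Sigma$ is dense:} given $v\in T$, I would build $v=v_0\vtl v_1\vtl\cdots$ with $v_{n+1}$ forcing some element of $\name C$ into $[\lh(v_n),\lh(v_{n+1}))$ (possible because $\name C$ is forced unbounded and $T$ has nodes of every length ${<}\lambda$ above any node), and then $v_\omega=\bigcup_n v_n\in\Sigma$ extends $v$.

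With $\Sigma$ in hand I would build $T^*\subseteq T$ by recursion so that its splitting nodes are exactly the members of $\Sigma$ it contains: begin above a fixed node of $\Sigma$; at a node $s\in\Sigma$ keep all of $\suc_T(s)$ (a club, by (i) and $T\in\bT^{\rm club}$); above each $s\conc\langle\gamma\rangle$ follow a single $\vtl$--path up to the next node of $\Sigma$ (which exists by (iii)); and at limit stages let the branch pass through the union of the chosen $\Sigma$--nodes, which again lies in $\Sigma$ by (ii). The resulting $T^*$ is a complete $\lambda$--tree in $\bT^{\rm club}$, and along every $\eta\in\lim_\lambda(T^*)$ the set of splitting levels is exactly $\{\lh(s):s\in\Sigma\ \&\ s\vtl\eta\}$, which is unbounded by construction and closed by (ii), hence a club. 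By Observation \ref{prop2.3} this gives $T^*\in\bbQ^{1,*}_\lambda=\bbQ^2_\lambda$.

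The main obstacle, and the whole point of the device, is the closure of the splitting set. One cannot simply retain the nodes sitting on elements of $\name C$: a node of length $\beta$ cannot in general decide whether an isolated ordinal $\beta$ lies in $\name C$, so along a branch one would lose control of limits of splitting levels. Passing to limit points of $\name C$ repairs this, because that property is both forceable by a node of the relevant length (fact (i)) and stable under unions (fact (ii)); the hypothesis that $\name C$ is \emph{closed} is exactly what turns limits of $\name C$--points back into splitting nodes.
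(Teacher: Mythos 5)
Your proof is correct and follows essentially the same route as the paper: the paper's set $S$ is precisely your $\Sigma$ (restricted to nodes of limit length), namely the nodes forcing their own length to be a limit point of the club name $\name{C}$, and the same three facts (each such node splits, the set is closed under unions of chains, and it is dense) are verified before thinning $T$ to a tree whose splitting nodes all lie in this set. The only cosmetic differences are that the paper packages the final construction via the fronts of Observation \ref{obs2.4} rather than an explicit recursion, and that you should require $\lh(s)$ to be a nonzero limit ordinal in the definition of $\Sigma$ (as the paper does) so that fact (i) holds literally at the degenerate node of length $0$.
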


\begin{proof}
Let $\name{C}$ be a $T$--name for a club of $\lambda$ such that 
\[\forces_T\mbox{`` }\big(\forall \alpha\in \name{C}\big) \big(|\suc_T(
\name{\eta}\rest\alpha)|>1\big)\mbox{ '',}\]
and put
\[S=\big\{t\in T:\lh(t)\mbox{ is a limit ordinal and }t\forces_T\mbox{`` }
 \big(\forall\alpha<\lh(t)\big)\big(\name{C}\cap (\alpha,\lh(t))\neq
\emptyset\big)\}\mbox{ ''.}\]
One easily verifies that
\begin{enumerate}
\item[(i)]  if $t\in S$, then $t\forces_T\mbox{`` }\lh(t)\in\name{C}\mbox{
    ''}$ and hence $|\suc_T(t)|>1$,  
\item[(ii)] if a sequence $\langle t_\alpha:\alpha<\alpha^*\rangle \subseteq
  S$ is $\vtl$--increasing, $\alpha^*<\lambda$, then 
  $\bigcup\limits_{\alpha<\alpha^*} t_\alpha\in S$,
\item[(iii)] $(\forall t\in T)(\exists s\in S)(t\vtl s)$.
\end{enumerate}
Consequently we may choose $T^*\subseteq T$ so that $T^*\in\bT^{\rm club}$
and for some fronts $F_\alpha$ of $T^*$ (for $\alpha<\lambda$) we have  
\begin{itemize}
\item $F_\alpha\subseteq S$, and if $\alpha<\beta<\lambda$, $t\in F_\beta$,
  then for some $s\in F_\alpha$ we have $s\vtl t$, 
\item if $\delta<\lambda$ is limit and $t_\alpha\in F_\alpha$ for
  $\alpha<\delta$ are such that $\alpha<\beta<\delta\ \Rightarrow t_\alpha
  \vtl t_\beta$, then $\bigcup\limits_{\alpha<\delta} t_\alpha\in F_\delta$,  
\item $|\suc_{T^*}(t)|>1$ if and only if $t\in
  \bigcup\limits_{\alpha<\lambda} F_\alpha$. 
\end{itemize}
Then also $T^*\in \bbQ^{1,*}_\lambda=\qone$ (remember Observations
\ref{obs2.4}, \ref{prop2.3}). 
\end{proof}

\begin{definition}
  \label{def3.2}
\begin{enumerate}
\item The $\lambda$--Cohen forcing notion $\bbC_\lambda$ is defined as
  follows:\\
{\bf a condition in $\bbC_\lambda$} is a sequence $\nu\in
{}^{<\lambda}\lambda$,\\
{\bf the order $\leq$ of $\bbC_\lambda$} is the extension of sequences
(i.e., $\nu_1\leq\nu_2$ if and only if $\nu_1\trianglelefteq \nu_2$). 
\item The axiom ${\rm Ax}^+_{\bbC_\lambda}$ is the following statement:\\
if $\name{S}$ is a $\bbC_\lambda$--name and $\forces_{\bbC_\lambda}\mbox{``
}\name{S}\mbox{ is a stationary subset of }\lambda\mbox{ ''}$, and
$\cO_\alpha\subseteq \bbC_\lambda$ are open dense sets (for
$\alpha<\lambda$) then there is a $\trianglelefteq$--directed
$\trianglelefteq$--downward closed set $H\subseteq \bbC_\lambda$ such that 
\begin{itemize}
\item $H\cap\cO_\alpha\neq \emptyset$ for all $\alpha<\lambda$, and
\item the interpretation $\name{S}[H]$ of the name $\name{S}$ is a
  stationary subset of $\lambda$. 
\end{itemize}
\end{enumerate}
\end{definition}

\begin{lemma}
  \label{lem3.3}
Let $T\in \bT^{\rm club}$. Then the following conditions are equivalent:
\begin{enumerate}
\item[(a)] there is $T^*\subseteq T$ such that $T^*\in \bbQ^2_\lambda$,  
\item[(b)] there is $T^*\subseteq T$ such that $T^*\in\bT^{\rm club}$ and 
\[\forces_{\bbC_\lambda}(\forall\eta\in {\lim}_\lambda(T^*))
(\mbox{the set }\{\delta{<}\lambda:|\suc_{T^*}(\eta\rest\delta)|>1\}
\mbox{ contains a club of }\lambda).\] 
\end{enumerate}
\end{lemma}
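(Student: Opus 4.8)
The plan is to prove the two implications separately, with (b)$\Rightarrow$(a) carrying the real content and resting on Lemma \ref{lem3.1} together with a forcing--equivalence between the tree ordering and $\bbC_\lambda$, while (a)$\Rightarrow$(b) is essentially an absoluteness argument built on the front representation of Observation \ref{obs2.4}.

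For (b)$\Rightarrow$(a), let $T^*\subseteq T$, $T^*\in\bT^{\rm club}$, witness (b). First I would observe that the poset $(T^*,\vtl)$ (longer sequences being the stronger conditions) is $({<}\lambda)$--closed, since unions of $\vtl$--chains of length $<\lambda$ are bounds in the complete $\lambda$--tree $T^*$; it has size $\le\lambda^{<\lambda}=\lambda$; and, because $T^*\in\bT^{\rm club}$, it splits densely into $\lambda$ pairwise incompatible extensions, as above every node there is a splitting node $t$ with $\suc_{T^*}(t)$ a club of size $\lambda$. By the standard uniqueness of $\lambda$--Cohen forcing, the separative quotient of $(T^*,\vtl)$ is forcing equivalent to $\bbC_\lambda$. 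Hence the generic $\lambda$--branch $\name{\eta}$ added by $(T^*,\vtl)$ lives in a $\bbC_\lambda$--extension, so applying the hypothesis (b) to this branch gives $\forces_{(T^*,\vtl)}$``$\{\alpha<\lambda:|\suc_{T^*}(\name{\eta}\rest\alpha)|>1\}$ contains a club''. This is exactly the hypothesis of Lemma \ref{lem3.1} with $T$ replaced by $T^*$, which therefore produces $T^{**}\subseteq T^*$ with $T^{**}\in\bbQ^2_\lambda$; since $T^{**}\subseteq T^*\subseteq T$, we obtain (a).

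For (a)$\Rightarrow$(b), I would simply take the witness $T^*\subseteq T$, $T^*\in\bbQ^2_\lambda$, unchanged and show that in the $\bbC_\lambda$--extension every $\lambda$--branch of $T^*$ has a club of splitting levels. Since $T^*\in\bbQ^2_\lambda=\bbQ^{1,*}_\lambda$ (Observation \ref{prop2.3}), Observation \ref{obs2.4} supplies a coherent, continuous system of fronts $\langle F_\alpha:\alpha<\lambda\rangle$ whose union is precisely the set of splitting nodes of $T^*$. Now fix, in the extension, any $\eta\in\lim_\lambda(T^*)$. As $\bbC_\lambda$ is $({<}\lambda)$--closed, every proper initial segment $\eta\rest\delta$ lies in $\bV$ and hence in $T^*$; using $T^*\in\bT^{\rm club}$ and a short meet argument (comparing $\eta$ with a splitting node above $\eta\rest\delta$) one sees that the splitting nodes below $\eta$ are cofinal, so they have order type $\lambda$. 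The $\alpha$--th of these nodes belongs to $F_\alpha$ by coherence of the fronts, the continuity clause handling limit stages, so the levels at which $\eta$ splits enumerate continuously and form a club. Because the fronts are fixed ground--model objects and $\bbC_\lambda$ preserves the regularity of $\lambda$, this reasoning is absolute between $\bV$ and the $\bbC_\lambda$--extension, yielding the required $\bbC_\lambda$--statement.

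The main obstacle is the forcing equivalence $(T^*,\vtl)\cong\bbC_\lambda$ invoked in (b)$\Rightarrow$(a): one must confirm that the separative quotient of the tree ordering genuinely matches $\lambda$--Cohen forcing, i.e.\ that the skeleton of splitting nodes is $({<}\lambda)$--closed (unions of short $\vtl$--chains of splitting nodes are again splitting, which is exactly where the continuity of fronts from Observation \ref{obs2.4} is needed) and everywhere $\lambda$--splitting, so that the uniqueness theorem applies. A secondary point requiring care in (a)$\Rightarrow$(b) is that a \emph{new} branch $\eta$ still meets each front $F_\alpha$; here I would rely on completeness of $T^*$ and the existence of a splitting node above every node, rather than on any genericity of $\eta$.
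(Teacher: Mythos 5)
Your proof is correct and follows essentially the same route as the paper: for (b)$\Rightarrow$(a) the paper likewise passes through the forcing equivalence of $(T^*,\trianglelefteq)$ with $\bbC_\lambda$ and then applies Lemma \ref{lem3.1}, while for (a)$\Rightarrow$(b) it reaches the same conclusion more quickly by observing that the $({<}\lambda)$--completeness of $\bbC_\lambda$ gives $\forces_{\bbC_\lambda}T^*\in\bbQ^2_\lambda$ and hence $\forces_{\bbC_\lambda}T^*\in\bbQ^1_\lambda$ by Observation \ref{prop2.3}; your hands-on argument with the fronts of Observation \ref{obs2.4} simply unwinds that inclusion inside the extension. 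One correction to your closing remarks: the forcing equivalence in (b)$\Rightarrow$(a) does \emph{not} require the splitting nodes of $T^*$ to be closed under unions of short chains, and you could not get this from Observation \ref{obs2.4} anyway, since in (b) the tree $T^*$ is only assumed to lie in $\bT^{\rm club}$ --- possessing such a continuous splitting skeleton is essentially the conclusion (a) you are trying to reach, so that route would be circular. The uniqueness theorem for $\bbC_\lambda$ needs only that the \emph{whole} poset $(T^*,\trianglelefteq)$ is $({<}\lambda)$--closed (true, as $T^*$ is a complete $\lambda$--tree), has size at most $\lambda$, and that every node has $\lambda$ pairwise incompatible extensions (immediate from dense splitting into clubs); at limit stages of the back-and-forth one takes the union in $T^*$ and then extends to a splitting node.
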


\begin{proof}
Assume (a). By the $({<}\lambda)$--completeness of $\bbC_\lambda$ we see
that $\forces_{\bbC_\lambda} T^*\in \bbQ^2_\lambda$, and hence
$\forces_{\bbC_\lambda}T^*\in \bbQ^1_\lambda$ (remember Observation
\ref{prop2.3}). Consequently (b) follows. 
\medskip

Now assume (b). Since $(T^*,\trianglelefteq)$ (as a forcing notion) is
isomorphic with $\bbC_\lambda$ we have
\[\forces_{T^*}(\forall\eta\in {\lim}_\lambda(T^*))(\mbox{the set } 
\{\delta{<}\lambda:|\suc_T(\eta\rest\delta)|>1\}\mbox{ contains a club of 
}\lambda),\]
so in particular
\[\forces_{T^*}\mbox{`` the set }\{\delta<\lambda:|\suc_T(\name{\eta}
\rest\delta)|>1\}\mbox{ contains a club of }\lambda\mbox{ '',}\] 
where $\name{\eta}$ is a $T^*$--name for the generic $\lambda$--branch. It 
follows now from Lemma \ref{lem3.1} that (a) holds.
\end{proof}

\begin{proposition}
\label{prop3.4}
Assume ${\rm Ax}^+_{\bbC_\lambda}$. Then $\bbQ^2_\lambda$ is a dense
subset of $\qtwo$ (so the forcing notions $\qtwo,\qone$ are equivalent).   
\end{proposition}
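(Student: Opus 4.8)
The plan is to reduce everything to Lemma~\ref{lem3.3}. Since $\bbQ^2_\lambda\subseteq\bbQ^1_\lambda$ with the same order (inverse inclusion) by Observation~\ref{prop2.3}, a dense subset with the induced order gives forcing equivalence, so it suffices to prove density: every $T\in\bbQ^1_\lambda$ has an extension $T^*\subseteq T$ with $T^*\in\bbQ^2_\lambda$. Fixing $T\in\bbQ^1_\lambda\subseteq\bT^{\rm club}$ and applying the equivalence (a)$\Leftrightarrow$(b) of Lemma~\ref{lem3.3} to $T$, this will follow once I establish clause (b) with the witness $T^*=T$ itself, namely
\[
\forces_{\bbC_\lambda}(\forall\eta\in\lim_\lambda(T))(\{\delta<\lambda:|\suc_T(\eta\rest\delta)|>1\}\text{ contains a club of }\lambda).
\]
The whole point is to control the branches of $T$ that $\bbC_\lambda$ may \emph{add} (the hypothesis $T\in\bbQ^1_\lambda$ speaks only about ground model branches), and this is exactly where ${\rm Ax}^+_{\bbC_\lambda}$ enters.

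First I would argue by contradiction. If the displayed statement fails, then some $p\in\bbC_\lambda$ and some $\bbC_\lambda$--name $\name\eta$ satisfy $p\forces$ ``$\name\eta\in\lim_\lambda(T)$ and $\{\delta:|\suc_T(\name\eta\rest\delta)|=1\}$ is stationary'' (a set fails to contain a club precisely when its complement is stationary). Passing to the cone $\{q:p\trianglelefteq q\}$, which is isomorphic to $\bbC_\lambda$ by shifting indices through $\lh(p)$ (using $\lambda^{<\lambda}=\lambda$, and noting that ${\rm Ax}^+_{\bbC_\lambda}$ transfers along this isomorphism), I may assume $p=\langle\rangle$. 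Thus $\forces_{\bbC_\lambda}$ ``$\name{S}$ is stationary'', where $\name{S}$ is a name for $\{\delta<\lambda:|\suc_T(\name\eta\rest\delta)|=1\}$.

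Next I would apply ${\rm Ax}^+_{\bbC_\lambda}$ to this $\name{S}$ together with the sets $\cO_\delta=\{q\in\bbC_\lambda:q\text{ decides }\name\eta\rest\delta\}$ for $\delta<\lambda$; these are open by definition and dense because $\name\eta\rest\delta$ is a name for an element of the ground model set $T\cap{}^\delta\lambda$. This yields a $\trianglelefteq$--directed, $\trianglelefteq$--downward closed $H\subseteq\bbC_\lambda$ meeting every $\cO_\delta$ with $\name{S}[H]$ stationary. Using directedness of $H$, the decided values $(\name\eta\rest\delta)[H]$ cohere into a single sequence $\eta^*\in{}^\lambda\lambda$ lying in $\bV$, and since each initial segment is forced into $T$ we get $\eta^*\in\lim_\lambda(T)$, an honest ground model branch. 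To transfer stationarity, for each $\delta\in\name{S}[H]$ I would pick $q\in H$ with $q\forces\delta\in\name{S}$ and strengthen it inside $H$ to some $r$ also deciding $\name\eta\rest\delta=\eta^*\rest\delta$; then $r\forces$ ``$|\suc_T(\eta^*\rest\delta)|=1$'', which, being a statement about the ground model objects $T$ and $\eta^*\rest\delta$, is simply true. Hence $\name{S}[H]\subseteq\{\delta:|\suc_T(\eta^*\rest\delta)|=1\}$, so the non-splitting levels of $\eta^*$ are stationary, contradicting $T\in\bbQ^1_\lambda$. This contradiction establishes the displayed statement, and Lemma~\ref{lem3.3} then delivers $T^*\subseteq T$ with $T^*\in\bbQ^2_\lambda$.

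The main obstacle I anticipate lies in the middle step: reconstructing a genuine ground model branch $\eta^*$ from the merely $\trianglelefteq$--directed set $H$ (rather than a true generic filter), and verifying that the axiom's interpretation $\name{S}[H]$ really picks out non-splitting levels of $\eta^*$. Both points rest on the coherence forced by directedness of $H$ and on the absoluteness of ``$|\suc_T(s)|=1$'' between $\bV$ and forcing extensions; once these are set up carefully the remaining verifications (openness and density of the $\cO_\delta$, the cone reduction, and the final stationarity clash) are routine.
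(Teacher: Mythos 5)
Your proof is correct and follows essentially the same route as the paper's: the paper runs the identical argument directly over the forcing $(T,\trianglelefteq)\cong\bbC_\lambda$ with its canonical generic branch (either Lemma \ref{lem3.1} applies, or else the axiom produces a directed $H$ whose union is a ground-model branch with stationarily many non-splitting levels, contradicting $T\in\qtwo$), whereas you prove the formally stronger clause (b) of Lemma \ref{lem3.3} for $T^*=T$ and then cite that lemma --- but the key application of ${\rm Ax}^+_{\bbC_\lambda}$ is the same. The one point where the paper is more careful is that it works with the explicit canonical name $\name{S}'=\{(\check{\alpha},s): s\in T,\ \alpha=\lh(s),\ |\suc_T(s)|=1\}$, so that the interpretation $\name{S}'[H]$ by the (non-generic) directed set $H$ visibly consists of non-splitting levels of the reconstructed branch; you should likewise fix such a nice name for your $\name{S}$ to fully justify the step ``pick $q\in H$ with $q\forces\delta\in\name{S}$''.
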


\begin{proof}
Let $T\in\qtwo$ and let us consider $(T,\trianglelefteq)$ as a forcing
notion. Let $\name{S}$ be a $T$--name given by
\[\forces_T\name{S}=\{\delta<\lambda:|\suc_T(\name{\eta}\rest\delta)|>1\}\] 
where $\name{\eta}$ is a $T$--name for the generic $\lambda$--branch. Ask
the following question
\begin{itemize}
\item Does $\forces_T\mbox{`` }\name{S}\mbox{ contains a club of
  }\lambda\mbox{ ''}$ ?
\end{itemize}
If the answer is ``yes'', then by Lemma \ref{lem3.1} there is $T^*\subseteq
T$ such that $T^*\in \bbQ^2_\lambda$. 

So assume that the answer to our question is ``not''. Then there is $t\in T$
such that 
\[t\forces_T\mbox{`` }\lambda\setminus\name{S}\mbox{ is stationary }\mbox{
  ''.}\] 
Let $\name{S}'=\{(\check{\alpha},s): s\in T\mbox{ and }\alpha= \lh(s)\mbox{
  and }|\suc_T(s)|=1\}$. Then $\name{S}'$ is a $T$--name for a subset of
$\lambda$ and $\forces_T\name{S}'=\lambda\setminus \name{S}$. Therefore,
$t\forces_T$`` $\name{S}'$ is stationary '' and since the forcing notion $T$
above $t$ is isomorphic with $\bbC_\lambda$, we may use the assumption of
${\rm Ax}^+_{\bbC_\lambda}$ to pick a $\trianglelefteq$--directed
$\trianglelefteq$--downward closed set $H\subseteq T$ such that $t\in H$ and   
\begin{itemize}
\item $H\cap\{s\in T:\lh(s)>\alpha\}\neq \emptyset$ for all
  $\alpha<\lambda$, and 
\item $\name{S}'[H]$ is stationary in $\lambda$.
\end{itemize}
Then for each $\alpha<\lambda$ the intersection $H\cap {}^\alpha\lambda$ is
a singleton, say $H\cap {}^\alpha\lambda=\{\eta_\alpha\}$, and 
\begin{itemize}
\item if $\alpha<\beta$ then $\eta_\alpha\vtl \eta_\beta$, and
\item $\alpha\in\name{S}'[H]$ if and only if $|\suc_T(\eta_\alpha)|=1$.
\end{itemize}
Let $\eta=\bigcup\limits_{\alpha<\lambda} \eta_\alpha$. Then
$\eta\in\lim_\lambda(T)$ and the set $\{\alpha<\lambda: |\suc_T(\eta \rest
\alpha)|=1\}$ is stationary, contradicting $T\in\qtwo$.  
\end{proof}

\begin{proposition}
\label{notdense}
It is consistent that $\qone$ is not dense in $\qtwo$. 
\end{proposition}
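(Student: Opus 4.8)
The plan is to reduce the statement to the construction of a single witness and then to build that witness in a carefully chosen ground model. By Observation \ref{prop2.3} we have $\qone=\bbQ^{1,*}_\lambda\subseteq\qtwo$, and since $T_1\leq T_2$ means $T_2\subseteq T_1$, the density of $\qone$ in $\qtwo$ is exactly the assertion that every $T\in\qtwo$ has a subtree $T^*\subseteq T$ with $T^*\in\qone$. So it suffices to produce, in some model of ZFC, a single condition $T\in\qtwo$ admitting no $T^*\subseteq T$ in $\qone$. Applying Lemma \ref{lem3.3} to $T\in\qtwo\subseteq\bT^{\rm club}$, this is equivalent to arranging that for \emph{no} subtree $T^*\subseteq T$ with $T^*\in\bT^{\rm club}$ does one have $\forces_{\bbC_\lambda}(\forall\eta\in\lim_\lambda(T^*))$ that the splitting set $\{\delta:|\suc_{T^*}(\eta\rest\delta)|>1\}$ contains a club. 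Thus I must defeat every candidate refinement $T^*$ by exhibiting, in a $\bbC_\lambda$--extension, a branch of $T^*$ whose splitting set is co--stationary.

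Since Proposition \ref{prop3.4} shows that ${\rm Ax}^+_{\bbC_\lambda}$ makes $\qone$ dense in $\qtwo$, the witness can only live in a model where that axiom fails, and in fact fails along a fixed tree. The approach is to work in a ground model satisfying $\diamondsuit_\lambda$ (for instance $\bV=L$, which is legitimate as $\lambda$ is strongly inaccessible) and to build $T\subseteq{}^{<\lambda}\lambda$ by recursion on the levels $\delta<\lambda$, deciding at stage $\delta$ the set $T\cap{}^\delta\lambda$ together with its splitting pattern. At each stage I maintain a \emph{splitting spine}: a $\vtl$--dense subtree, isomorphic to ${}^{<\lambda}\lambda$, each of whose nodes splits into a club, which secures $T\in\bT^{\rm club}$ and the existence of $\vtl$--bounds at limit levels. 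The $\diamondsuit_\lambda$--sequence is used to anticipate, at stage $\delta$, a subset of $\delta$ coding a candidate pair consisting of $T^*\cap{}^{<\delta}\lambda$ and the initial data of a club name $\name{C}$, and above the guessed nodes I \emph{plant an obstruction}: the only prolongation of the guessed $T^*$--configuration past level $\delta$ is steered through a node $t$ with $|\suc_T(t)|=1$, so that any branch of $T^*$ threading the guess correctly is non--splitting at $\delta$.

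Two things must then be verified. First, $T\in\qtwo$: every $\lambda$--branch of $T$ computed in the ground model splits on a club, the planted non--splitting nodes sitting off the spine and being reachable only through the specific guessed configurations. Second, no refinement exists: given $T^*\subseteq T$ in $\bT^{\rm club}$, I view $(T^*,\vtl)$ as a copy of $\bbC_\lambda$ exactly as in the proof of Lemma \ref{lem3.3}, and run a density argument against an arbitrary club name $\name{C}$ to show that the generic branch of $(T^*,\vtl)$ threads the $\diamondsuit$--guesses correctly at stationarily many $\delta$; hence it is non--splitting on a stationary set, so its splitting set is co--stationary and does not contain a club. This is precisely the failure of clause (b) of Lemma \ref{lem3.3} for $T^*$, and it mirrors the forward direction of Proposition \ref{prop3.4}. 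As $T^*$ was arbitrary, $T$ has no subtree in $\qone$.

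The hard part, and the main obstacle, is the reconciliation of these two demands. For $T\in\qtwo$ I need every \emph{ground--model} branch to split on a club (richness), while to defeat every refinement I need some \emph{generic} branch of each subtree to split only on a co--stationary set (sparseness); the obstructions must therefore be invisible to ground--model branches yet unavoidable, stationarily often, by Cohen--generic branches. A naive use of $\diamondsuit$ that targets branches directly backfires: for a fixed ground--model branch $\eta$ coding a subset of $\lambda$, the set of $\delta$ at which the guess matches $\eta\rest\delta$ is itself stationary, so $\eta$ would meet the planted obstructions stationarily and $T$ would fall out of $\qtwo$. The delicate point is thus to trigger an obstruction at $\delta$ only on a guessed cofinal pattern that each ground--model branch realizes non--stationarily while a \emph{fresh} generic branch of a subtree realizes stationarily. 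Securing this asymmetry — which is exactly the combinatorial content that ${\rm Ax}^+_{\bbC_\lambda}$ forbids — together with bookkeeping that simultaneously preserves the spine and blocks all subtrees, is the core of the argument.
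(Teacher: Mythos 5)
Your reduction is fine: non-density amounts to exhibiting one $T\in\qtwo$ with no subtree in $\qone$, and the reformulation through Lemma \ref{lem3.3} is legitimate. But the proposal does not actually construct such a $T$. Everything substantive is deferred to a $\diamondsuit_\lambda$-recursion whose key step you describe only by its desired effect, and you yourself point out that the obvious implementation fails: if the diamond guess at level $\delta$ triggers an obstruction whenever it matches the restriction of some branch/subtree datum, then it also matches every \emph{ground-model} branch of $T$ stationarily often, so the planted non-splitting nodes put $T$ outside $\qtwo$. Your final paragraph names "securing this asymmetry" as "the core of the argument" and leaves it unsecured; that is not a detail to be filled in but the entire mathematical content of the proposition. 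As written there is no candidate mechanism for an obstruction pattern that every ground-model branch realizes non-stationarily while every $\bbC_\lambda$-generic branch of every $\bT^{\rm club}$-subtree realizes stationarily, and it is not at all clear that a single ground-model recursion of length $\lambda$ can deliver one (note also that subtrees $T^*$ and nice $\bbC_\lambda$-names for clubs each require $\lambda$ bits, so the diamond only sees initial segments of the objects it must defeat, and you give no reflection argument showing that initial-segment guesses suffice).

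For comparison, the paper avoids this tension entirely by forcing rather than by a ground-model construction. It adds a generic tree $\name{T}_0$ via a small-condition forcing $\bbQ_0$ (in which non-splitting nodes stay non-splitting), and then runs a $({<}\lambda)$-support iteration of length $2^\lambda$ which, for every name $\name{\eta}_\xi$ for a $\lambda$-branch of $\name{T}_0$, shoots a club through the set of levels where $\name{\eta}_\xi$ splits. Thus $\name{T}_0\in\qtwo$ holds in the final model by fiat, one branch at a time, with the $\lambda^+$-cc and strategic completeness (Claim \ref{cl3}) guaranteeing the bookkeeping catches all branches. The "no subtree in $\qone$" half is then a genericity/fusion argument (Claim \ref{cl4}): given a name $\name{T}$ for a subtree in $\qone$, an $\omega$-step construction produces a node $\nu^*$ that is a limit of splitting nodes of $\name{T}$, avoids all the branches $\name{\eta}_i$ currently under club-shooting, and can therefore be sealed off as non-splitting in $\name{T}_0$ by a stronger condition --- contradicting $\name{T}\in\bbQ^2_\lambda$. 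The per-branch treatment by the iteration is precisely what reconciles "every branch splits on a club" with "every subtree can be defeated", and it has no analogue in your sketch. If you want to salvage your approach you must either supply the missing combinatorial construction in $L$ (and prove both verifications), or switch to a forcing argument of the paper's kind.
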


\begin{proof}
We will build a $({<}\lambda)$--strategically complete $\lambda^+$--cc
forcing notion forcing that $\qone$ is not dense in $\qtwo$. It will be
obtained by means of a $({<}\lambda)$--support iteration of length
$2^\lambda$. First, we define a forcing notion $\bbQ_0$:\\
{\bf a condition in} $\bbQ_0$ is a tree $q\subseteq {}^{{<}\lambda}\lambda$
such that $|q|<\lambda$;\\
{\bf the order $\leq=\leq_{\bbQ_0}$ of $\bbQ_0$} is defined by\\
$q\leq q'$ if and only if $q\subseteq q'$ and $(\forall\eta\in q)(|\suc_q
(\eta)|=1\ \Rightarrow\ |\suc_{q'}(\eta)|=1)$.  

Plainly, $\bbQ_0$ is a $({<}\lambda)$--complete forcing notion of size
$\lambda$. Let $\name{T}_0$ be a $\bbQ_0$--name such that
$\forces_{\bbQ_0}\mbox{`` }\name{T}_0=\bigcup\name{G}_{\bbQ_0}$ ''. Then 
\[\forces_{\bbQ_0}\mbox{`` }\name{T}_0\in \bT^{\rm club}\mbox{ and }
\big(\forall\eta\in\name{T}_0\big)\big(|\suc_{\name{T}_0}(\eta)|>1\
\Rightarrow\ \suc_{\name{T}_0}(\eta)=\lambda\big)\mbox{ ''.}\]

For a set $A\subseteq\lambda$ let $\bbQ^A$ be the forcing notion shooting a
club through $A$. Thus\\ 
{\bf a condition in} $\bbQ^A$ is a closed bounded set $c$ included in $A$,\\
{\bf the order $\leq=\leq_{\bbQ^A}$ of $\bbQ^A$} is defined by\\
$c\leq c'$ if and only if $c=c'\cap\big(\max(c)+1\big)$.

Now we inductively define a $({<}\lambda)$--support iteration
$\bar{\bbQ}=\langle\bbP_\xi,\dbQ_\xi:\xi<2^\lambda\rangle$ and a sequence
$\langle \name{A}_\xi,\name{\eta}_\xi:\xi<2^\lambda\rangle$ so that the
following demands are satisfied. 
\begin{enumerate}
\item[(i)] $\bbQ_0$ is the forcing notion defined above, $\name{T}_0$ is the
  $\bbQ_0$--name for the generic tree added by $\bbQ_0$.
\item[(ii)] $\bbP_\xi$ is strategically $({<}\lambda)$--complete, satisfies
  $\lambda^+$--cc and has a dense subset of size $2^\lambda$ (for each
  $\xi\leq 2^\lambda$). 
\item[(iii)] $\name{\eta}_\xi,\name{A}_\xi$ are $\bbP_\xi$--names such that  
\[\forces_{\bbP_\xi}\mbox{`` }\name{\eta}_\xi\in {\lim}_\lambda(\name{T}_0)
\mbox{ and }\name{A}_\xi=\{\alpha<\lambda:|\suc_{\name{T}_0}(\name{\eta}_\xi
\rest\alpha)|>1\}\mbox{ ''.}\]
\item[(iv)] $\forces_{\bbP_\xi}$`` $\dbQ_\xi=\bbQ^{\name{A}_\xi}$ ''. 
\item[(v)] If $\name{\eta}$ is a $\bbP_{2^\lambda}$--name for a member of
  $\lim_\lambda(\name{T}_0)$, then for some $\xi<2^\lambda$ we have
  $\forces_{\bbP_\xi}$`` $\name{\eta}=\name{\eta}_\xi$ ''. 
\end{enumerate}

Clause (ii) will be shown soon, but with it in hand using a bookkeeping
device we can take care of clause (v). Then the iteration $\bar{\bbQ}$ will
be fully determined. So let us argue for clause (ii) (assuming that the
iteration is constructed so that clauses (i), (iii) and (iv) are satisfied).

For $0<\xi\leq 2^\lambda$ we let $\bbP^*_\xi$ consist of all conditions
$p\in\bbP_\xi$ such that $0\in\dom(p)$ and for some limit ordinal
$\delta^p<\lambda$, for each $i\in\dom(p)\setminus\{0\}$ we have: 
\begin{enumerate}
\item[(a)] $p(0)\subseteq {}^{{\leq}\delta^p+1}\lambda$ and for some
  $\eta_{p,i}\in {}^{\delta^p}\lambda\cap p(0)$, $p\rest
  i\forces_{\bbP_i}$`` $\name{\eta}_i\rest \delta^p=\eta_{p,i}$ '', 
\item[(b)] $p(i)$ is a closed subset of $\delta^p+1$ (not just a 
  $\bbP_i$--name) and $\delta^p\in p(i)$,
\item[(c)] if $\beta\in p(i)$, then $|\suc_{p(0)}(\eta_{p,i}\rest
  \beta)|>1$.   
\end{enumerate}

\begin{claim}
\label{cl3}
\begin{enumerate}
\item If $p,p'\in\bbP^*_\xi$, then $p\leq_{\bbP_\xi} p'$ if and only if
$\dom(p)\subseteq\dom(p')$, $p(0)\leq_{\bbQ_0} p'(0)$ and $(\forall
  i\in\dom(p)\setminus\{0\})(p(i)=p'(i)\cap (\delta^p+1))$. 
\item $|\bbP^*_\xi|=\lambda\cdot |\xi|^{<\lambda}$.
\item $\bbP^*_\xi$ is a $({<}\lambda)$--complete $\lambda^+$--cc subforcing
  of $\bbP_\xi$. Moreover, if $\langle p_\alpha:\alpha<\gamma\rangle$ is a 
  $\leq_{\bbP_\xi}$--increasing sequence of members of $\bbP^*_\xi$,
  $\gamma<\lambda$, then there is $p\in\bbP^*_\xi$ such that
  $p_\alpha\leq_{\bbP_\xi} p$ for all $\alpha<\lambda$ and
  $\delta^p=\sup(\delta^{p_\alpha}:\alpha<\gamma)$.   
\item $\bbP_\xi^*$ is dense in $\bbP_\xi$. Moreover, for every $p\in
  \bbP_\xi$ and $\alpha<\lambda$ there is $q\in\bbP^*_\xi$ such that
  $p\leq_{\bbP_\xi} q$ and $\delta^q>\alpha$. 
\end{enumerate}
\end{claim}

\begin{proof}[Proof of the Claim]
(1), (2), (3)\quad  Straightforward.

\noindent (4)\quad Induction on $\xi\in (0,2^\lambda]$.

\noindent{\sc Case}\quad $\xi=\xi_0+1$\\
Let $p\in\bbP_\xi$. Construct inductively a sequence $\langle p_n:
n<\omega\rangle\subseteq \bbP^*_{\xi_0}$ such that for each $n<\omega$ we
have   
\begin{itemize}
\item $p\rest\xi_0\leq_{\bbP_{\xi_0}} p_n\leq_{\bbP_{\xi_0}} p_{n+1}$ and
  $\alpha<\delta^{p_n}<\delta^{p_{n+1}}$, 
\item for some closed set $c\subseteq\delta^{p_0}$ we have $p_0
  \forces_{\bbP_{\xi_0}}$`` $p(\xi_0)=c$ '', 
\item for some sequence $\nu_n\in {}^{\delta^{p_n}}\lambda\cap p_{n+1}(0)$
  we have $p_{n+1}\forces_{\bbP_{\xi_0}}$``$\name{\eta}_{\xi_0}\rest
  \delta^{p_n} =\nu_n$''. 
\end{itemize}
(The construction is clearly possible by our inductive hypothesis.) Now we 
define a condition $q\in \bbP_\xi^*$. We declare that $\dom(q)=
\bigcup\limits_{n<\omega}\dom(p_n)\cup\{\xi_0\}$ and for $i\in\dom(q)
\setminus \{0,\xi_0\}$ we set $\eta_{q,i}=\bigcup\{\eta_{p_n,i}: i\in
\dom(p_n),\ n\in\omega\}$ and we also put $\eta_{q,\xi_0}=
\bigcup\limits_{n<\omega}\nu_n$. We define  
\begin{itemize}
\item $\delta^q=\sup\limits_{n<\omega}\delta^{p_n}$, 
$q(0)=\bigcup\limits_{n<\omega}
p_n(0)\cup\{\eta_{q,i},\eta_{q,i}\conc\langle 0\rangle,\eta_{q,i}\conc
\langle 1\rangle:i\in \dom(q)\setminus\{0\}\}$,
\item $q(i)=\bigcup\limits_{n<\omega} p_n(i)\cup \{\delta^q\}$ for $i\in
  \dom(q)\setminus\{0,\xi_0\}$ and 
\item $q(\xi_0)=c\cup\{\delta^q\}$. 
\end{itemize}
One easily verifies that $q\in\bbP^*_\xi$ and it is stronger than $p$. 

\noindent{\sc Case}\quad $\xi$ is limit and $\cf(\xi)<\lambda$\\
Let $p\in\bbP_\xi$. Fix an increasing sequence $\langle\xi_\vare:
\vare<\cf(\xi)\rangle\subseteq \xi$ cofinal in $\xi$ and then use the
inductive assumption (and properties of an iteration) to choose inductively
a sequence $\langle p_\vare:\vare<\cf(\xi)\rangle$ such that for each
$\vare<\vare'< \cf(\xi)$ we have 
\[p_\vare\in\bbP_{\xi_\vare}^*,\quad \alpha<\delta^{p_\vare}<
\delta^{p_{\vare'}}\quad\mbox{ and }\quad p\rest\xi_\vare
\leq_{\bbP_{\xi_\vare}} p_\vare\leq_{\bbP_{\xi_\vare}} p_{\vare'}\rest
\xi_\vare.\]  
Then define a condition $q\in\bbP_\xi^*$ as follows. Declare that
$\dom(q)=\bigcup\limits_{\vare<\cf(\xi)}\dom(p_\vare)$ and for $i\in \dom(q)
\setminus \{0\}$ set $\eta_{q,i}= \bigcup\{\eta_{p_\vare,i}:i\in
\dom(p_\vare),\ \vare<\cf(\xi)\}$. Put 
\begin{itemize}
\item $\delta^q=\sup\limits_{\vare<\cf(\xi)}\delta^{p_\vare}$, 
\item $q(0)=\bigcup\limits_{\vare<\cf(\xi)} p_\vare(0)\cup\{\eta_{q,i},
  \eta_{q,i}\conc\langle 0\rangle,\eta_{q,i}\conc\langle 1\rangle:i\in
  \dom(q)\setminus\{0\}\}$, 
\item $q(i)=\bigcup\limits_{\vare<\cf(\xi)} p_\vare(i)\cup \{\delta^q\}$ for
  $i\in\dom(q)\setminus\{0\}$.
\end{itemize}

\noindent{\sc Case}\quad $\xi$ is limit and $\cf(\xi)\geq\lambda$\\
Immediate as then $\bbP_\xi=\bigcup\limits_{\zeta<\xi}\bbP_\zeta$. 
\end{proof}

It follows from \ref{cl3} that the clause (ii) of the construction of the
iteration is satisfied. In particular, the limit $\bbP_{2^\lambda}$ is
strategically $({<}\lambda)$--complete $\lambda^+$--cc and has a dense
subset of size $2^\lambda$. It should be also clear that
$\forces_{\bbP_{2^\lambda}}$`` $\name{T}_0\in\qtwo$ '' (remember
(iii)--(v)).  

\begin{claim}
\label{cl4}
$\forces_{\bbP_{2^\lambda}}$ `` $\name{T}_0$ contains no tree from 
$\qone$ ''.   
\end{claim}

\begin{proof}[Proof of the Claim]
Suppose towards contradiction that $\name{T}$ is a $\bbP_{2^\lambda}$--name
such that 
\[p\forces_{\bbP_{2^\lambda}}\mbox{`` }\name{T}\in \qone\ \mbox{ and }\
\name{T}\subseteq \name{T}_0\mbox{ ''}\qquad\mbox{ for some
}p\in\bbP_{2^\lambda}.\]  
Note that, by \ref{cl3}(3,4),  
\begin{enumerate}
\item[$(*)$] if $p\leq_{\bbP_{2^\lambda}} q$, $\nu\in {}^{{<}\lambda}
\lambda$, $q\forces_{\bbP_{2^{\lambda}}}\nu\in \name{T}$, and
$\kappa<\lambda$ and $\name{\rho}_i$ are $\bbP_{2^\lambda}$--names for
members of ${}^\lambda\lambda$ (for $i<\kappa$),\\
then there are $q^*\in\bbP^*_{2^\lambda}$ and $\nu^*\in q^*(0)$ such that
$q\leq_{\bbP_{2^\lambda}} q^*$, $\nu\vtl\nu^*$ and  
\[q^*\forces_{\bbP_{2^\lambda}}\mbox{`` }\nu^*\in\name{T}\ \ \&\ \
|\suc_{\name{T}}(\nu^*)|>1\ \ \&\ \ (\forall i<\kappa)(\name{\rho}_i \rest 
\lh(\nu^*)\neq\nu^*)\mbox{ ''.}\]  
\end{enumerate}
Using $(*)$ repeatedly $\omega$ times we may construct a sequence $\langle
p_n,\nu^*_n:n<\omega\rangle$ such that 
\begin{itemize}
\item $p_n\in \bbP^*_{2^\lambda}$, $p\leq_{\bbP_{2^\lambda}} 
  p_n\leq_{\bbP_{2^\lambda}} p_{n+1}$, $\delta^{p_n}<\delta^{p_{n+1}}$, and  
\item $\nu^*_n\in {}^{{<}\lambda}\lambda$, $\nu^*_n\in p_{n+1}(0)$, $\nu^*_n  
  \vtl\nu^*_{n+1}$, and    
\[p_{n+1}\forces_{\bbP_{2^\lambda}}\mbox{`` }\nu_n^*\in\name{T}\ \&\
  |\suc_{\name{T}}(\nu^*_n)|>1\ \&\ (\forall i\!\in\!\dom(p_n)\setminus  
  \{0\})(\name{\eta}_i\rest\lh(\nu^*_n)\neq\nu^*_n)\mbox{ ''.}\] 
\end{itemize}
Then we define a condition $q\in \bbP_\xi^*$: we declare that
$\dom(q)=\bigcup\limits_{n<\omega}\dom(p_n)$ and for $i\in\dom(q)\setminus
\{0\}$ we set $\eta_{q,i}= \bigcup\{\eta_{p_n,i}:i\in\dom(p_n),\ n\in
\omega\}$. We also put $\nu^*=\bigcup\limits_{n<\omega} \nu^*_n$,
$\delta^q=\sup\limits_{n<\omega}\delta^{p_n}$,  and then:
\begin{itemize}
\item $q(0)=\bigcup\limits_{n<\omega}p_n(0)\cup\{\eta_{q,i},\eta_{q,i} \conc 
  \langle 0\rangle,\eta_{q,i}\conc\langle 1\rangle:i\in \dom(q)\setminus
  \{0\}\}\cup \{\nu^*, \nu^*\conc\langle 0\rangle\}$,
\item $q(i)=\bigcup\limits_{n<\omega} p_n(i)\cup \{\delta^q\}$ for $i\in
  \dom(q)\setminus\{0\}$.
\end{itemize}
Note that $\nu^*\notin\{\eta_{q,i}\rest\lh(\nu^*):i\in\dom(q)
\setminus\{0\}\}$ and $\lh(\nu^*)\leq\delta^q=\lh(\eta_{q,i})$ for
$i\in\dom(q)\setminus\{0\}$. Now we easily check that
$q\in\bbP^*_{2^\lambda}$ is stronger than $p$ and it forces that $\nu^*\in
\name{T}$ is a limit of splitting points of $\name{T}$, but itself it is not
a splitting point (even in $\name{T}_0$). A contradiction with
$p\forces\name{T}\in\bbQ^2_\lambda$.   
\end{proof}
\end{proof}

\begin{proposition}
\label{propNOTiso}
Assume that the complete Boolean algebras ${\rm RO}(\qtwo)$ and ${\rm
  RO}(\qone)$ are isomorphic. Then $\qone$ is a dense subset of
$\qtwo$. 
\end{proposition}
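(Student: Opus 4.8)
The plan is to argue by contraposition, isolating a property of the generic extension that $\qone$ always forces but that fails below some condition of $\qtwo$ whenever $\qone$ is not dense, and then to transport this property across the given isomorphism. Recall first that by Observation \ref{prop2.3} we have $\qone=\bbQ^{1,*}_\lambda\subseteq\qtwo$, the order in both being reverse inclusion; thus ``$\qone$ is dense in $\qtwo$'' means precisely that every $T\in\qtwo$ has a subtree $T^*\subseteq T$ with $T^*\in\qone$. For the generic $\lambda$--branch $\name{\eta}$ determined by the generic filter (the unique branch lying in all of its trees) I would work with its \emph{stable splitting set} $\name{S}=\{\alpha<\lambda:\name{\eta}\rest\alpha$ is a splitting node in every tree of the generic filter$\}$.

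First I would record two ``local'' facts. (A) $\forces_{\qone}$``$\name{S}$ contains a club''. This is exactly where the defining clause of $\qone=\bbQ^{1,*}_\lambda$ is used: splitting nodes are cofinal in each condition (the trees lie in $\bT^{\rm club}$), so $\name{S}$ is forced cofinal; and if $\gamma$ is a limit of elements of $\name{S}$ then $\name{\eta}\rest\gamma$ is a limit of splitting nodes in every tree $T$ of the filter, whence $|\suc_T(\name{\eta}\rest\gamma)|>1$ by the club--closure built into $\qone$, so $\gamma\in\name{S}$; thus $\name{S}$ is forced closed and cofinal. (B) If $T\in\qtwo$ has no subtree in $\qone$, then $T\forces_{\qtwo}$``$\name{S}$ does not contain a club''. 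Here I would argue inside $\qtwo$: a condition forcing $\name{S}\supseteq\name{C}$ for a club $\name{C}$ could be thinned, using the $(<\lambda)$--strategic completeness of $\qtwo$ to decide longer and longer initial segments of $\name{C}$ while keeping the promised splittings, into a subtree whose splitting set genuinely contains a club; by the $\qtwo$--analogue of Lemma \ref{lem3.1} this subtree lies in $\qone$, contradicting the hypothesis on $T$.

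With (A) and (B) in hand the isomorphism is meant to do the rest, provided ``$\name{S}$ contains a club'' can be read off from the pair $(\bV,\bV[G])$ rather than from the particular poset. Concretely I would phrase the property intrinsically: for $\eta\in({}^\lambda\lambda)^{\bV[G]}$ generating the extension, let $\psi(\eta)$ say ``$\eta$ lies in $\lim_\lambda(T^*)$ for some club--closed $T^*\in\qone\cap\bV$ and its splitting set relative to $T^*$ contains a club''. Statement (A) then reads $\forces_{\qone}\psi(\name{\eta})$, while (B) reads $T\forces_{\qtwo}\neg\psi(\name{\eta})$ for a witnessing $T$. Since $\psi$ refers only to $\bV$, to the branch, and to the extension, an isomorphism $\pi\colon{\rm RO}(\qtwo)\to{\rm RO}(\qone)$ should transport it: $\pi^{-1}(\name{\eta})$ becomes a $\qone$--name for a generator of the $\qone$--extension satisfying $\neg\psi$, which I want to play off against (A).

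The main obstacle is exactly this last transfer, i.e.\ making ``$\name{S}$ contains a club'' an honest isomorphism invariant. An abstract isomorphism need not match the two canonical generic branches, so one cannot simply assert $\pi(\name{\eta}_{\qtwo})=\name{\eta}_{\qone}$, and a priori the $\qone$--extension may harbour one generator with $\psi$ and another with $\neg\psi$. The way I would close the gap is to exploit that $\qone$ and $\qtwo$ are $(<\lambda)$--closed tree forcings whose generic filter is recovered from its branch (as for Sacks--type forcings, $T$ is in the filter iff the generic branch lies in $\lim_\lambda T$); this gives an \emph{intrinsic} description of the generic branch inside $\bV[G]$ and, via the resulting minimality/rigidity, should let one show that a $\bV$--generator of $\bV[G]$ of the relevant form is captured by a club--closed ground--model tree exactly when the canonical one is. Establishing this rigidity, together with the $\qtwo$--fusion in step (B) (the analogue of Lemma \ref{lem3.1} for $\qtwo$ itself rather than for $(T,\vtl)\cong\bbC_\lambda$), are the two points that require genuine work; the remainder is bookkeeping.
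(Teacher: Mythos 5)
Your plan rests on two pillars, (A) and (B), and both fail for the candidate invariants you propose. For (A): the ``stable splitting set'' $\name{S}=\{\alpha:\name{\eta}\rest\alpha$ is splitting in \emph{every} tree of the generic filter$\}$ is forced to be \emph{empty}, not club-containing. Indeed, for any fixed $\beta<\lambda$ and any condition $T$, pick $s\in T$ with $\lh(s)>\beta$ and pass to $(T)_s$: this is a condition with $\mrot((T)_s)\trianglerighteq s$, so $\name{\eta}\rest\beta=s\rest\beta$ has a unique successor in $(T)_s$ and hence $(T)_s\forces\beta\notin\name{S}$. Since stronger trees have fewer splittings, no level survives into all members of the filter. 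The per-condition splitting set $\{\alpha:|\suc_T(\name{\eta}\rest\alpha)|>1\}$ that Lemma \ref{lem3.1} and Proposition \ref{prop3.4} work with is a different object: there $\name{\eta}$ is the generic of the forcing $(T,\vtl)\cong\bbC_\lambda$, not of $\qell$, and the statement is about a single tree, not about the extension. Your fallback formulation $\psi(\eta)$ fares no better: for $T^*\in\qone=\bbQ^{1,*}_\lambda$ every branch automatically has club splitting set relative to $T^*$, so $\psi$ reduces to ``$\eta\in\lim_\lambda(T^*)$ for some $T^*\in\qone\cap\bV$'', which is \emph{trivially} true of every branch of every condition because ${}^{<\lambda}\lambda\in\qfour\subseteq\qone$; so (B) is false. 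More generally, a condition $T\in\qtwo$ with no $\qone$--subtree can perfectly well be refined to $T'\subseteq T\cap T^*$ with $T^*\in\qone$ (containment in a $\qone$--tree from above gives no $\qone$--subtree from below), so no statement of the form ``the branch is captured by a ground-model $\qone$--tree'' can separate the two sides. Finally, the step you yourself flag as the main obstacle --- transporting the property through an abstract isomorphism of ${\rm RO}(\qtwo)$ and ${\rm RO}(\qone)$ --- is not resolved: the two extensions are literally the same model, so only a property of the generic \emph{object} can help, and the appeal to ``rigidity/minimality'' is not established (minimality of the generic degree is exactly what can fail for such tree forcings, cf.\ Brown--Groszek).

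The paper's proof avoids invariants of the extension altogether and works directly with the mutual interpretations supplied by the isomorphism: names $\name{H}_{3-\ell},\name{\eta}_{3-\ell}$ for the other generic filter and branch, the relation $R=R_1\cap R_2$ on $\qtwo\times\qone$ consisting of pairs each of which forces the other into its generic filter, and a proof that $R$ is dense in each $R_\ell$ (Claim \ref{cl2}). A fusion argument (Claims \ref{cl5}, \ref{cl6}, using Lemma \ref{pre2.5}) refines any condition to one in which \emph{every node} $t$ decides $\name{\eta}_{3-\ell}\rest\lh(t)$. Combining, one gets a pair $(T^+_1,T^+_2)\in R$ above a given $T_1\in\qtwo$ together with level-preserving, mutually inverse, $\vtl$--preserving bijections $\varphi_\ell:T^+_\ell\to T^+_{3-\ell}$ that match splitting nodes; since $T^+_2\in\qone$, this forces $T^+_1\in\qone$, giving density. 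If you want to salvage your outline, this explicit tree-isomorphism extraction is the ingredient you would have to reconstruct; the invariant-of-the-extension route does not appear to close.
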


\begin{proof}
Since ${\rm RO}(\qtwo)$ and ${\rm RO}(\qone)$ are isomorphic, we may find
$\bbQ^{3-\ell}_\lambda$--names $\name{H}_\ell,\name{\eta}_\ell$ (for
$\ell=1,2$) such that 
\begin{enumerate}
\item[$(\boxdot)_1$] $\forces_{\qell}$`` $\name{H}_{3-\ell}\subseteq
  \bbQ^{3-\ell}_\lambda$ is generic over $\bV$ and $\name{\eta}_{3-\ell}
  \in{}^\lambda\lambda$ is the corresponding generic branch '',
\item[$(\boxdot)_2$] if $G_\ell\subseteq\qell$ is generic over $\bV$ and
  $G_{3-\ell}=\name{H}_{3-\ell}[G_\ell]$, then
  $G_\ell=\name{H}_\ell[G_{3-\ell}]$.  
\end{enumerate}
Consider $\qtwo\times\qone$ with the product order and for $\ell=1,2$ put 
\[R_\ell=\{(T_1,T_2)\in\qtwo\times\qone: T_\ell\forces_{\qell} T_{3-\ell}\in
\name{H}_{3-\ell}\}\]
and $R=R_1\cap R_2$. 

\begin{claim}
\label{cl2}
$R$ is a dense subset of both $R_1$ and $R_2$.  
\end{claim}

\begin{proof}[Proof of the Claim]
First note that 
\begin{enumerate}
\item[$(\boxdot)_3$] if $(T_1,T_2)\in\qtwo\times\qone$ and
  $T_1\nVdash_{\qtwo}\mbox{`` }T_2\notin \name{H}_2\mbox{ ''}$, then there
  is $T^*_1\geq T_1$ such that $(T^*_1,T_2)\in R_1$ (and symmetrically when
  the roles of 1 and 2 are interchanged).   
\end{enumerate}
Also,
\begin{enumerate}
\item[$(\boxdot)_4$] if $(T_1,T_2)\in R_\ell$, $\ell=1,2$, then $T_{3-\ell}
  \nVdash_{\bbQ^{3-\ell}_\lambda} T_\ell\notin \name{H}_\ell$. 
\end{enumerate}
[Why? Assume towards contradiction that
$T_{3-\ell}\forces_{\bbQ^{3-\ell}_\lambda} T_\ell\notin\name{H}_\ell$. Let
$G_\ell\subseteq\qell$ be a generic over $\bV$ such that $T_\ell\in
G_\ell$. Put $G_{3-\ell}=\name{H}_{3-\ell}[G_\ell]$. Then
$G_{3-\ell}\subseteq\bbQ^{3-\ell}_\lambda$ is generic over $\bV$ and
$\name{H}_\ell[G_{3-\ell}]=G_\ell$. Since $(T_1,T_2)\in R_\ell$ we know
$T_{3-\ell}\in\name{H}_{3-\ell}[G_\ell]$ and hence (by our assumption
towards contradiction) $T_\ell\notin\name{H}_\ell[G_{3-\ell}]=G_\ell$,
contradicting the choice of $G_\ell$.] 

Now suppose $(T_1,T_2)\in R_\ell$, $\ell\in\{1,2\}$. Choose inductively a
sequence $\langle (T^n_1,T^n_2):n<\omega\rangle$ such that $(T^0_1,T^0_2)
=(T_1,T_2)$ for all $n<\omega$:
\begin{itemize}
\item if $n$ is even, then $(T^n_1,T^n_2)\in R_\ell$,
\item if $n$ is odd, then $(T^n_1,T^n_2)\in R_{3-\ell}$,
\item $(T^n_1,T^n_2)\leq (T^{n+1}_1,T^{n+1}_2)$. 
\end{itemize}
By $(\boxdot)_3+(\boxdot)_4$ there are no problems with carrying out the
inductive process. Put 
\[T^\omega_1=\bigcap_{n<\omega} T^n_1\quad\mbox{ and }\quad T^\omega_2=
\bigcap_{n<\omega} T^n_2.\]
Then $T^\omega_\ell$ is the least upper bound of $\langle
T^n_\ell:n<\omega\rangle$ and hence easily $(T^\omega_1,T^\omega_2)\in R$,
$(T_1,T_2)\leq (T^\omega_1,T^\omega_2)$. 
\end{proof}

\begin{claim}
\label{cl5}
Let $\ell\in\{1,2\}$, $T\in\qell$. Then there is $T^*\geq T$ such that for
some $\nu\in {}^{{<}\lambda}\lambda$ we have 
\[\lh(\nu)=\lh(\mrot(T^*))\quad \mbox{ and }\quad T^*\forces_{\qell}
\mbox{`` }\nu\vtl\name{\eta}_{3-\ell}  \mbox{ ''.}\] 
\end{claim}

\begin{proof}[Proof of the Claim]
By induction on $\alpha<\lambda$ choose a sequence $\langle T_\alpha:\alpha
<\lambda\rangle$ so that for all $\alpha<\beta<\lambda$ we have  
\begin{enumerate}
\item[$(\boxdot)_5$] $T_\alpha\leq T_\beta$, $\mrot(T_\alpha)\vtl
  \mrot(T_\beta)$ and 
\item[$(\boxdot)_6$] $T_{\alpha+1}$ forces a value to
  $\name{\eta}_{3-\ell}\rest \lh(\mrot(T_\alpha))$, and  
\item[$(\boxdot)_7$] if $\alpha$ is limit then $T_\alpha=\bigcap\limits_{
    \xi<\alpha} T_\xi$. 
\end{enumerate}
Let $\eta=\bigcup\limits_{\alpha<\lambda}\mrot(T_\alpha)\in
{}^\lambda\lambda$. Then $\eta\in\lim_\lambda(T_\alpha)$ for each
$\alpha<\lambda$ so the sets $\{\delta<\lambda:|\suc_{T_\alpha}
(\eta\rest\delta)|>1\}$ contain clubs (for each
$\alpha<\lambda$). Consequently we may pick limit $\delta<\lambda$ such that
$|\suc_{T_\alpha}(\eta\rest\delta)|>1$ for all $\alpha<\delta$. Then also,
by $(\boxdot)_7$, $\eta\rest\delta=\mrot(T_\delta)$ and clearly (by
$(\boxdot)_6$) $T_\delta$ forces a value to $\name{\eta}_{3-\ell}\rest
\delta$.   
\end{proof}

\begin{claim}
\label{cl6}
Let $\ell\in\{1,2\}$, $T\in\qell$. Then there is $T^*\geq T$ such that for
every $t\in T^*$, for some $\nu\in {}^{{<}\lambda}\lambda$ we have 
\[\lh(\nu)=\lh(t)\quad \mbox{ and }\quad (T^*)_t\forces_{\qell}
\mbox{`` }\nu\vtl\name{\eta}_{3-\ell} \mbox{ ''.}\] 
\end{claim}

\begin{proof}[Proof of the Claim]
We choose inductively conditions $T_\alpha\in\qell$ and fronts
$F_\alpha$ of $T_\alpha$ so that for all $\alpha<\beta<\lambda$:
\begin{enumerate}
\item[$(\boxdot)_8$] $T_0=T$, $F_0=\{\langle\rangle\}$, 
\item[$(\boxdot)_9$] $T_\alpha\leq T_\beta$, $F_\alpha\subseteq T_\beta$
  and $(\forall t\in F_\beta)(\exists i<\lh(t))(t\rest i\in F_\alpha)$, 
\item[$(\boxdot)_{10}$] if $\alpha$ is limit, then $T_\alpha=
  \bigcap\limits_{\xi<\alpha} T_\xi$ and $F_\alpha=\big\{t\in T_\alpha: 
  (\forall \xi<\alpha)(\exists i<\lh(t))(t\rest i\in F_\xi)$ and
  $(\forall i<\lh(t))(\exists \xi<\alpha)(\exists j<\lh(t))(i<j\ \&\
  t\rest j\in F_\xi)\big\}$,
\item[$(\boxdot)_{11}$] if $t\in F_\alpha$, $\zeta\in\suc_{T_\alpha}(t)$
  then $t\conc\langle\zeta\rangle\in T_{\alpha+1}$ and for some
  $s=s_{t,\zeta}\in F_{\alpha+1}$ we have 
  \[\mrot\big((T_{\alpha+1})_{t\conc\langle\zeta\rangle}\big)=s\ \mbox{
  and }\ (T_{\alpha+1})_s\mbox{ forces a value to }
\name{\eta}_{3-\ell} \rest \lh(s),\]   
\item[$(\boxdot)_{12}$] $F_{\alpha+1}=\{s_{t,\zeta}:t\in F_\alpha\ \&\
  \zeta\in\suc_{T_\alpha}(t)\}$ (where $s_{t,\zeta}$ are determined by
  $(\boxdot)_{11}$). 
\end{enumerate}
It should be clear that the construction is possible (at successor stages
use \ref{cl5}). Set $T^*=\bigcap\limits_{\alpha<\lambda} T_\alpha$. By Lemma
\ref{pre2.5} we know that $T^*\in\qell$ and $F_\alpha\subseteq T^*$ are
fronts of $T^*$ (for $\alpha<\lambda$). Moreover, 
\begin{enumerate}
\item[$(\boxdot)_{13}$] if $s\in T^*$ and $|\suc_{T^*}(s)|>1$, then $s\in
  \bigcup\limits_{\alpha<\lambda} F_\alpha$. 
\end{enumerate}
It follows from $(\boxdot)_{11}+(\boxdot)_{12}+ (\boxdot)_{10}$ that for
every $t\in F_\alpha$ the condition $(T^*)_t$ forces a value to
$\name{\eta}_{3-\ell}\rest \lh(t)$. If $t\in T^*\setminus\bigcup 
\limits_{\alpha<\lambda} F_\alpha$, then choose the shortest $s\in
\bigcup\limits_{\alpha<\lambda} F_\alpha$ such that $t\vtl s$. Then
$(T^*)_t= (T^*)_s$ (remember $(\boxdot)_{13}$) and hence in particular the
condition $(T^*)_t$ forces a value to $\name{\eta}_{3-\ell}\rest
\lh(t)$. 
\end{proof}

Now suppose that $T_1\in\qtwo$. Use Claim \ref{cl6} to choose a condition
$T^*_1\in\qtwo$ such that $T_1\leq T^*_1$ and 
\begin{enumerate}
\item[$(\boxdot)^{T^*_1,2}_{14}$] for every $t\in T^*_1$ the condition
  $(T^*_1)_t$ forces a value to $\name{\eta}_2\rest \lh(t)$. 
\end{enumerate}
Then use Claim \ref{cl2} to pick $(T'_1,T'_2)\in R$ so that $T^*_1\leq
T'_1$. Note that then also $(\boxdot)^{T'_1,2}_{14}$ holds. Apply Claim
\ref{cl6} to $T'_2$ and $\ell=2$ to find a condition $T''_2\geq T'_2$ such 
that the suitable demand $(\boxdot)^{T''_2,1}_{14}$ holds, and then use
Claim \ref{cl2} again to choose $(T^+_1,T^+_2)\in R$ such that $T^+_1\geq
T'_1$ and $T^+_2\geq T''_2$. Note that then 
\begin{enumerate}
\item[$(\boxdot)^{T^+_\ell,3-\ell}_{14}$] for every $t\in T^+_\ell$ the
  condition $(T^+_\ell)_t$ forces a value to $\name{\eta}_{3-\ell}\rest
  \lh(t)$.   
\end{enumerate}
For $\ell=1,2$ and $t\in T^+_\ell$ let $\varphi_\ell(t)\in
{}^{{<}\lambda}\lambda$ be such that $\lh(\varphi_\ell(t))=\lh(t)$ and
$(T^+_\ell)_t\forces$``$\varphi_\ell(t)\vtl\name{\eta}_{3-\ell}$''. Since
$(T^+_1,T^+_2)\in R$ we know that 
\begin{enumerate}
\item[$(\boxdot)^\ell_{15}$] $\varphi_\ell(t)\in T^+_{3-\ell}$ for each
  $t\in T^+_\ell$  
\end{enumerate}
and by $(\boxdot)_2$ we also have 
\begin{enumerate}
\item[$(\boxdot)_{16}$] $\varphi_1\comp \varphi_2$ is the identity on
  $T^+_2$ and $\varphi_2\comp \varphi_1$ is the identity on
  $T^+_1$. Moreover,\\ 
if $t\in T^+_1$, then $((T^+_1)_t,(T^+_2)_{\varphi_1(t)})\in R$ and\\ 
if $s\in T^+_2$, then $((T^+_1)_{\varphi_2(s)},(T^+_2)_s)\in R$. 
\end{enumerate}
Thus $\varphi_\ell:T^+_\ell\longrightarrow T^+_{3-\ell}$ is a bijection
preserving levels and the extension relation $\vtl$, and $\varphi_1$ is the 
inverse of $\varphi_2$. Consequently, $t\in T^+_\ell$ is a splitting of
$T^+_\ell$ if and only if $\varphi_\ell(t)$ is a splitting in
$T^+_{3-\ell}$. Therefore we may conclude that $T^+_1\in\qone$.
\end{proof}

\begin{conclusion}
It is consistent that the forcing notions $\qtwo,\qone$ are not equivalent.   
\end{conclusion}

\begin{proof}
By Propositions \ref{notdense} and \ref{propNOTiso}.
\end{proof}


\end{document}